\def\cl{\centerline}
\def\la{\lambda}
\def\a{\alpha}
\def\b{\beta}
\def\pa{\partial}
\def\vs{\vspace*}
\def\LL{\mathscr{L}}
\def\Z{\mathbb{Z}}
\def\C{\mathbb{C}}
\def\Vir{\hbox{Vir}}
\def\vs{\vspace*}
\numberwithin{equation}{section}
\newtheorem{theo}{Theorem}[section]
\newtheorem{defi}[theo]{Definition}
\newtheorem{coro}[theo]{Corollary}
\newtheorem{lemm}[theo]{Lemma}
\newtheorem{prop}[theo]{Proposition}
\newtheorem{rem}[theo]{Remark}
\begin{document}
\begin{center}
{\bf\large Extensions of Schr\"odinger-Virasoro conformal modules}
\footnote {$^{\,*}$Corresponding author: lmyuan@hit.edu.cn (Lamei Yuan)}
\end{center}

\cl{Lamei Yuan$^{\,*}$, Kaijing Ling}

\cl{\small Department of Mathematics, Harbin Institute of Technology, Harbin
150001, China}

\cl{\small\footnotesize E-mails: lmyuan@hit.edu.cn,
kjling\_edu@126.com }
\vs{8pt}

{\small
\parskip .005 truein
\baselineskip 3pt \lineskip 3pt
\noindent{{\bf Abstract:} In this paper, we study extensions between two finite irreducible conformal modules over the Schr\"odinger-Virasoro conformal algebra and the extended Schr\"odinger-Virasoro conformal algebra. Also, we classify all finite nontrivial irreducible conformal modules over the extended Schr\"odinger-Virasoro conformal algebra. As a byproduct, we obtain a classification of extensions of Heisenberg-Virasoro conformal modules.

 \vs{5pt}

\noindent{\bf Key words:} Schr\"odinger-Virasoro conformal algebra, extended Schr\"odinger-Virasoro conformal algebra, conformal module, extension

\vs{5pt}

\noindent{\bf MR(2000) Subject Classification:}~ 17B10, 17B65, 17B68

\parskip .001 truein\baselineskip 6pt \lineskip 6pt

\section{Introduction}
\vs{8pt}

Lie conformal algebras (LCAs), introduced by V. Kac, represent an axiomatic description of operator product expansion (OPE) in conformal field theory. In particular, $\la$-brackets arise as generating functions for the singular part of the OPE. The structure, cohomology and representation theory of LCAs was developed by V. Kac and his coworkers in the late 1990s (\cite{BKV,CK,DK,CKW1,CKW2,Kac1}), and non-semisimple LCAs associated to infinite-dimensional Lie algebras of Virasoro type were studied recently in \cite{SY1,SY2,SXY,WY,YW1,YW2}. As pointed out in \cite{CK}, conformal modules of LCAs are not completely reducible in general. Therefore it is necessary to study the extension problem. Extensions between two finite irreducible conformal modules over the Virasoro, the current and the Neveu-Schwarz and the semi-direct sum of the Virasoro and the current conformal algebras were classified by S.-J. Cheng, V. Kac and M. Wakimoto in \cite{CKW1,CKW2}. By using their techniques, Ngau Lam solved the extension problem for the supercurrent conformal algebras in \cite{L}.

In this paper, we aim to study extensions of modules over the Schr\"odinger-Virasoro conformal algebra and the extended Schr\"odinger-Virasoro conformal algebra, which were introduced in \cite{SY1} as Lie conformal algebras associated to the Schr\"odinger-Virasoro Lie algebra and the extended Schr\"odinger-Virasoro Lie algebra, respectively.
The Schr\"odinger-Virasoro conformal algebra is defined as a finite free Lie conformal algebra
$\mathrm{SV}=\mathbb{C}[\partial]L\oplus
\mathbb{C}[\partial]M\oplus\mathbb{C}[\partial]Y$ endowed with the
following  nontrivial $\lambda$-brackets \vspace*{-7pt}
\begin{eqnarray}\label{lamda-bracket}
[L_\lambda L]=(\partial+2\lambda)L,\ \ {[L_\lambda Y]}=(\partial+\frac32\lambda)Y,\ {[L_\lambda M]}=(\partial+\lambda)M,\ \,{[Y_\lambda
Y]}=(\partial+2\lambda)M,
\end{eqnarray}
whereas the extended Schr\"odinger-Virasoro conformal algebra
is a finite free Lie conformal algebra $\widetilde
{\mathrm{SV}}=\mathbb{C}[\partial]L\oplus
\mathbb{C}[\partial]M\oplus\mathbb{C}[\partial]Y\oplus\mathbb{C}[\partial]N$
endowed with the nontrivial $\lambda$-brackets defined by
(\ref{lamda-bracket}), together with the
following nontrivial ones
\begin{eqnarray}\label{lamda-bracket1}
[L_\lambda N]=(\partial+\lambda)N,\ \ [N_\lambda M]=2M,\ \ [N_\lambda Y]=Y.
\end{eqnarray}
Note that both $\mathrm{SV}$ and $\widetilde {\mathrm{SV}}$ are non-semisimple LCAs, and they contain the
Virasoro conformal algebra $\Vir$ as a subalgebra, which is a
free $\C[\partial]$-module generated by $L$ such that
\begin{eqnarray}\label{Vir}
\Vir=\C[\partial]L,\ \ \ [L_\lambda L]=(\partial+2\lambda)L.
\end{eqnarray}
Moreover, the extended Schr\"odinger-Virasoro conformal algebra contains the Heisenberg-Virasoro
conformal algebra as a subalgebra, which is generated by $L$ and $N$ as a $\C[\pa]$-module and satisfies
 the following $\lambda$-brackets
\begin{eqnarray}\label{elie}
[L_\lambda L]=(\partial+2\lambda)L,\ \ [L_\lambda
N]=(\partial+\lambda)N,\ \ [N_\lambda L]=\lambda N,\ \ [N_\lambda
N]=0.
\end{eqnarray}
Therefore, the classification of extensions of Virasoro conformal modules obtained in \cite{CKW1} will be used in our study, and one will see that our result can be directly applied to the Heisenberg-Virasoro
conformal algebra.

The paper is organized as follows. In Section 2, we first review the notion of a Lie conformal algebra, and that of an extended annihilation algebra. The definition of conformal modules and their extensions over a Lie conformal algebra will be also recalled. Then we list some known results, including the
classification of finite nontrivial irreducible
conformal modules over the Schr\"odinger-Virasoro conformal algebra (see Proposition \ref{p1}) and the
classification of extensions
of modules over the Virasoro conformal algebra (see Theorems \ref{th2}-\ref{th4}). These results will be important for the rest of the paper.
In Section 3, we
study extensions between two finite irreducible conformal modules over the Schr\"odinger-Virasoro conformal algebra. Three types of extensions of Schr\"odinger-Virasoro conformal modules will be discussed. Explicitly, there is a one-dimensional conformal module involved in the first two types, whereas  both modules involved in the third type are non-one-dimensional.
In Section 4, we classify and construct all finite irreducible conformal modules over the extended Schr\"odinger-Virasoro conformal algebra by using the equivalent language of LCAs and extended annihilation algebras, and some techniques developed in \cite{CK,WY}. Section 5 is then devoted to a classification of extensions of the extended Schr\"odinger-Virasoro conformal modules, and the results will be applied to the Heisenberg-Virasoro
conformal algebra in the last section.

Throughout the paper, all vector spaces, tensor products, and algebras are assumed to be over the field of complex numbers $\C$. In addition to the standard notation $\Z$, we use $\Z^+$ (resp. $\C^*$)  to denote the set of nonnegative integers (resp. nonzero complex numbers).
\section{Preliminaries}

In this section, we recall the definition of Lie conformal algebras, annihilation algebras, conformal modules and their extensions, and some known results that we need in this paper. For more details, the reader is referred to \cite{CK,CKW1,Kac1,WY}.

\begin{defi}\rm
A Lie conformal algebra $R$ is a $\C[\partial]$-module endowed with a $\C$-bilinear map
$$ R\otimes R\rightarrow \C[\lambda]\otimes R,\ \  a\otimes b \mapsto [a_\lambda b],$$
called the $\la$-bracket, and
satisfying the following axioms ($a, b, c\in R$),
\begin{eqnarray}
[\partial a_\lambda b]&=&-\lambda[a_\lambda b],\ \ [ a_\lambda \partial b]=(\partial+\lambda)[a_\lambda b] \ \ \mbox{(conformal\  sesquilinearity)},\label{Lc1}\\
{[a_\lambda b]} &=& -[b_{-\lambda-\partial}a] \ \ \mbox{(skew-symmetry)},\label{Lc2}\\
{[a_\lambda[b_\mu c]]}&=&[[a_\lambda b]_{\lambda+\mu
}c]+[b_\mu[a_\lambda c]]\ \ \mbox{(Jacobi \ identity)}\label{Lc3}.
\end{eqnarray}
\end{defi}

Let $R$ be a Lie conformal algebra.
For each $j\in\Z^+$, we can define the {\it $j$-product} $a_{(j)}b$ of any two elements $a,b\in R$ by the following generating series:
\begin{equation}\label{jj1}
[a_{\lambda}b]=\sum_{j\in\Z^{+}}(a_{(j)}b)\frac{\lambda^j}{j!}.
\end{equation}
Then the following axioms of $j$-products hold:
\begin{equation}\label{lcaj}
\aligned
&a_{(n)}b=0,\ {\rm for}\ n\gg0;\\
&(\partial a)_{(n)}b=-na_{(n-1)}b;\\
&a_{(n)}b=\sum_{j\in\Z^{+}}(-1)^{n+j+1}\frac{\partial^j}{j!} b_{(n+j)}a;\\
&[a_{(m)},b_{(n)}]=\sum_{j=0}^m \begin{pmatrix}
m\\j
\end{pmatrix}(a_{(j)}b)_{(m+n-j)}.
\endaligned
\end{equation}
Actually, one can also define Lie conformal algebras using the language of $j$-products (c.f. \cite{Kac1}).

Consider the space $\widetilde{R}=R\bigotimes\C[t,t^{-1}]$ with $\widetilde{\partial}=\partial\otimes{\rm id}+{\rm id}\otimes\partial_t$, where $\rm id$ appearing on the left
(resp. right) of $\bigotimes$ is the identity operator acting on $R$ (resp. $\C[t,t^{-1}]$). This space is called the {\it affinization}
of $R$. Its generating elements can be written as $a\otimes t^m$, where $a\in R$ and $m\in\Z$. For clarity, we will use the notation $\widetilde{R}=R[t,t^{-1}]$, $a t^m$ for its elements and $\widetilde{\partial}=\partial+\partial_t$. By \eqref{lcaj}, we obtain a well defined commutation relation on $\widetilde{R}$:
\begin{equation}\label{ppp2}
[at^m, bt^n]=\sum_{j\in\Z^{+}}
\begin{pmatrix}
m\\j
\end{pmatrix}
(a_{(j)}b)t^{m+n-j}, \ \forall\ at^m,\, bt^n\in \widetilde{R},
\end{equation}
which gives $\widetilde{R}$ a structure of algebra, denoted by $( \widetilde{R}, [\cdot,\cdot])$.
It can be verified that the subspace $\widetilde{\partial}\widetilde{R}$ spanned by elements of the form $\{(\pa a) t^n+n a t^{n-1}| n\in\Z\}$ is a  two-sided ideal of the algebra $( \widetilde{R}, [\cdot,\cdot])$. Set
\begin{equation}
Lie(R)={\widetilde{R}}/{\widetilde{\partial}\widetilde{R}}.
\end{equation}
Let $a_{(m)}$ denote the image of $at^m$ in $Lie(R)$. Then $(\pa a)_{(n)}=-n a_{(n-1)}$. Define a bracket on $Lie(R)$ by
\begin{eqnarray}\label{lie}
[a_{(m)},b_{(n)}]=\sum_{j=0}^m \begin{pmatrix}
m\\j
\end{pmatrix}(a_{(j)}b)_{(m+n-j)},
\end{eqnarray}
for $a,b\in R$, $m, n\in\Z$. One can check that $(Lie(R),[\cdot,\cdot])$ is a Lie algebra with respect to \eqref{lie}. Note that $Lie(R)$ admits a derivation $\pa$ defined by $\pa(a_{(n)})=-n a_{(n-1)}$, for $a\in R$ and $n\in\Z$.
The Lie subalgebra $\textit{Lie}(R)^+={\rm span}_{\C}\{a_{(m)}\,|\,a\in R, m\in\Z^+\}$ is called the {\it annihilation algebra} of $R$.
The {\it extended annihilation algebra} $\textit{Lie}(R)^e$ is defined as the
semidirect product of the $1$-dimensional Lie subalgebra $\C \pa$ and $\textit{Lie}(R)^+$ with the action
$[\pa,a_{(n)}]=-na_{(n-1)}$.

\begin{defi}\label{def1} \rm A {\it conformal module} $V$ over a Lie conformal algebra $R$
is a $\mathbb{C}[\partial]$-module endowed with a $\C$-linear map
$R\rightarrow {{\rm End_{\mathbb{C}}}V \bigotimes_{\mathbb{C}}\mathbb{C}[\lambda]}$, $a\mapsto a_\lambda $, satisfying the following conditions for all $a,b\in R$:
\begin{eqnarray*}
[a_\lambda,b_\mu]=a_\lambda b_\mu -b_\mu a_\lambda =[a_\lambda b]_{\lambda+\mu},\
(\partial a)_\lambda =[\partial,a_\lambda]=-\lambda a_\lambda.
\end{eqnarray*}
A conformal module $V$ over a Lie conformal algebra $R$ is called {\it finite}
if $V$ is finitely generated over
$\mathbb{C}[\partial]$. A conformal module $V$ over a Lie conformal algebra $R$ is called {\it irreducible} if there is no nontrivial invariant subspace.
\end{defi}

Let $V$ be a conformal module over a Lie conformal algebra $R$.
An element $v$ in $V$ is called an {\it invariant} if $R_\la v=0$.
Denote by $V^0$ the subspace of invariants of $V$.
It is easy to see that $V^0$ is a conformal submodule of $V$.
If $V^0=V$, then $V$ is called a {\it trivial} $R$-module.
A trivial module just admits the structure of a $\C[\partial]$-module. The vector space $\C$ is viewed as a trivial module with trivial actions of both $\pa$ and $R$.
For any fixed complex constant $\alpha$, there
is a natural trivial $R$-module $\C c_\alpha$, such that $\C c_\alpha=\C$ and $\pa v=\alpha v,\ R_\la v=0$ for $v\in\C c_\alpha$.
The modules
$\C c_\alpha$ (with $\alpha\in\C$) exhaust all trivial irreducible $R$-modules.

The following result is due to \cite[Lemma 2.2]{Kac2}.\vs{-8pt}

\begin{lemm}
Let $R$ be a Lie conformal algebra and $V$ an $R$-module.\\
1) If $\partial v= a v$ for some $a\in \C$ and $v\in V$, then $R_\la v=0$.\\
2) If $V$ is a finite conformal module without any nonzero invariants, then $V$ is a free $\C[\partial]$-module.
\end{lemm}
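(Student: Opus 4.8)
The plan is to prove part 1) directly from the module axioms and then use it as the key input for part 2) via the structure theorem over a principal ideal domain.

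For part 1), fix $b\in R$ and consider $b_\la v$, which lies in $V[\la]$ and is \emph{polynomial} in $\la$ by the very definition of a conformal module. Applying the identity $(\pa b)_\la=[\pa,b_\la]=-\la b_\la$ of Definition \ref{def1} to the vector $v$ gives $b_\la(\pa v)=(\pa+\la)(b_\la v)$. Substituting the hypothesis $\pa v=av$ and using the $\C$-linearity of $b_\la$ yields
\[
(\pa+\la-a)(b_\la v)=0\quad\text{in } V[\la].
\]
Now I would compare $\la$-degrees: if $b_\la v=\sum_{i=0}^{n}v_i\la^i$ with $v_n\neq 0$, then the coefficient of $\la^{n+1}$ on the left-hand side is exactly $v_n$, forcing $v_n=0$, a contradiction. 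Hence $b_\la v=0$ for every $b\in R$, that is, $R_\la v=0$.

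For part 2), since $V$ is finite it is a finitely generated module over the principal ideal domain $\C[\pa]$, so $V\cong\C[\pa]^{\oplus r}\oplus T$ as $\C[\pa]$-modules, where $T$ is the torsion submodule; it therefore suffices to show $T=0$. Suppose $T\neq 0$. Decomposing $T$ as a direct sum of cyclic modules $\C[\pa]/(f_j)$ with each $f_j$ nonconstant and using that $\C$ is algebraically closed, pick a root $a\in\C$ of $f_1$; the residue class of $f_1/(\pa-a)$ in the first summand is then a nonzero element $w\in V$ with $\pa w=aw$. By part 1), $R_\la w=0$, so $w$ is a nonzero invariant of $V$, contradicting the hypothesis that $V$ has no nonzero invariants. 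Hence $T=0$ and $V$ is a free $\C[\pa]$-module.

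The computations involved are light; the only point that needs a little care is the degree bookkeeping in part 1), where one must use that $b_\la v$ is a polynomial in $\la$ (so that comparing top-degree coefficients is legitimate). The only conceptual ingredient is the observation in part 2) that nonzero torsion over $\C[\pa]$ always produces a $\pa$-eigenvector, which part 1) then identifies as an invariant.
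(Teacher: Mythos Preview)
Your proof is correct; both parts are argued cleanly, and the degree comparison in part 1) together with the PID structure theorem in part 2) is exactly the standard route. The paper itself does not prove this lemma but simply cites it from \cite{Kac2}, so there is no in-paper argument to compare against; your write-up could serve as the omitted proof.
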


An element $v\in V$ is called a {\it torsion element} if there exists a nonzero polynomial $p(\partial)\in\C[\partial]$ such that $p(\partial)v=0$.
For any $\C[\partial]$-module $V$, there exists a nonzero torsion element if and only if there exists a nonzero $v\in V$
such that $\partial v=a v$ for some $a\in\C$.
A finitely generated $\C[\partial]$-module $V$ is free if and only if $0$ is the only torsion element of $V$.

We deduce from the above discussions the following important result.

\begin{lemm}\label{I}
Let $R$ be a Lie conformal algebra.
Then every finite nontrivial irreducible $R$-module has no nonzero torsion element and
is a free $\C[\partial]$-module.
\end{lemm}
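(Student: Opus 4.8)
The plan is to combine the two parts of the preceding lemma of Kac with the elementary characterizations of torsion elements and invariants recalled just above. First I would argue that $V$ has no nonzero invariants: since $V$ is irreducible and $V^0$ is always a conformal submodule of $V$, we must have either $V^0=0$ or $V^0=V$. The second alternative would make $V$ a trivial $R$-module, contradicting the nontriviality hypothesis (a trivial module is by definition one with $V^0=V$), so $V^0=0$.

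Next I would rule out nonzero torsion elements. Suppose, for contradiction, that $V$ has a nonzero torsion element. By the characterization recalled above, this is equivalent to the existence of a nonzero $v\in V$ with $\partial v=av$ for some $a\in\C$. Part 1) of the preceding lemma then yields $R_\la v=0$, i.e. $v$ is a nonzero invariant, contradicting $V^0=0$. Hence $0$ is the only torsion element of $V$.

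Finally, since $V$ is finite, hence finitely generated over $\C[\partial]$, and $0$ is its only torsion element, the criterion recalled above shows that $V$ is a free $\C[\partial]$-module; equivalently, one may invoke part 2) of the preceding lemma directly, using that $V$ has no nonzero invariants. There is essentially no hard step here: the statement is a packaging corollary of the results already established, and the only point requiring (minimal) care is the implication that an irreducible module with $V^0=V$ is trivial, which is immediate from the definition of a trivial module.
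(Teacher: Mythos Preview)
Your proposal is correct and matches the paper's approach: the paper presents this lemma without a written proof, simply stating that it is deduced from the preceding discussion, and your argument fills in precisely those details (irreducibility forces $V^0=0$, Kac's lemma then excludes nonzero torsion, and the quoted freeness criterion finishes). There is nothing to add.
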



Assume that $V$ is a conformal module over $R$.
We can also define {\it $j$-actions} of $R$ on $V$ using the following generating series
\begin{equation}
a_{\lambda}v=\sum_{j\in\Z^{+}}(a_{(j)}v)\frac{\lambda^j}{j!}.
\end{equation}
The $j$-actions satisfy relations similar to those in (\ref{lcaj}).

It is immediate to see that a conformal module $V$ over a Lie conformal algebra $R$ is the same as a module over the extended annihilation algebra $\textit{Lie}(R)^e$ satisfying the local nilpotent condition
\begin{equation}\label{conformal}
a_{(n)}v=0,\ \ for\ \ v\in V,\ a\in R, \ n\gg 0.
\end{equation}
A $\textit{Lie}(R)^e$-module satisfying this condition is called {\it conformal}.

\begin{defi}\label{def2}
Let $V$ and $W$ be two modules over a Lie conformal algebra (or a Lie algebra) $R$. An {\it extension} of $W$ by $V$ is an exact sequence of $R$-modules of the form
\begin{eqnarray}\label{Em}
0\longrightarrow V\xlongrightarrow{i} E \xlongrightarrow{p} W \longrightarrow 0.
\end{eqnarray}
Two extensions $0\longrightarrow V\xlongrightarrow{i} E \xlongrightarrow{p} W \longrightarrow 0$ and $0\longrightarrow V\xlongrightarrow{i'} E' \xlongrightarrow{p'} W \longrightarrow 0$ are said to be {\it equivalent} if there exists a commutative diagram of the form
\begin{equation*}
\begin{CD}
0@>>> V @>i>{\rm }>  E @>p>> W@>>> 0\\
@. @V{1_V}VV @V\psi VV @V{1_W}VV\\
0@>>> V @>i'>{\rm }> E' @>p'>>
W @>>> 0,
\end{CD}
\end{equation*}
where $1_V: V\rightarrow V$ and $1_W: W\rightarrow W$ are the respective identity maps and $\psi: E\rightarrow E'$ is a homomorphism of modules.
\end{defi}

The direct sum of modules $V\bigoplus W$ obviously gives rise to an extension. Extensions equivalent to it are called {\it trivial extensions}. In general, an extension can be thought of as the direct sum of vector spaces $E=V\oplus W$, where $V$ is a submodule of $E$, while for $w$ in $W$ we have:
\begin{equation*}
a\cdot w=aw+\phi_a(w),\ \ a\in R,
\end{equation*}
where $\phi_a:W\rightarrow V$ is a linear map satisfying the cocycle condition: $$\phi_{[a,b]}(w)=\phi_a(bw)+a\phi_b(w)-\phi_b(aw)-b\phi_a(w),\ \, b\in R.$$ The set of these cocycles form a vector space over $\C$. Cocycles equivalent to the trivial extension are called {\it coboundaries}. They form a subspace and the quotient space by it is denoted by $\textrm{Ext}(W, V).$

It was shown in
\cite{CK} that all free nontrivial Virasoro conformal modules of rank one
over $\mathbb{C}[\partial]$ are the following ones $(\Delta,
\alpha\in \mathbb{C})$:
\begin{eqnarray}
M(\alpha,\Delta)=\mathbb{C}[\partial]v_\Delta,\ \ L_\lambda
v=(\partial+\alpha+\Delta \lambda)v_\Delta.
\end{eqnarray}
The module $M(\alpha,\Delta)$ is irreducible if and only if
$\Delta\neq 0$. The module $M(\alpha,0)$ contains a unique
nontrivial submodule $(\partial +\alpha)M(\alpha,0)$ isomorphic to
$M(\alpha,1).$  Moreover, the modules $M(\alpha,\Delta)$ with
$\Delta\neq 0$ exhaust all finite non-1-dimensional irreducible
Virasoro conformal modules. Therefore $M(\alpha,\Delta)$ with $\Delta\neq 0$, together with the one-dimensional modules $\C c_\beta$ ($\beta\in\C$), form a complete list of finite irreducible conformal modules over the Virasoro conformal algebra.

In \cite{CKW1}, extensions over the finite irreducible Virasoro conformal modules of the following three types have been classified ($\Delta, \bar\Delta\in\C^*$):
\begin{eqnarray}
&&0\longrightarrow \C{c_\gamma}\longrightarrow E \longrightarrow M(\alpha,\Delta) \longrightarrow 0,\label{type1}\\
&&0\longrightarrow M(\alpha,\Delta)\longrightarrow E \longrightarrow \C c_\gamma \longrightarrow 0,\label{type2}\\
&&0\longrightarrow M(\bar\alpha,\bar\Delta)\longrightarrow E \longrightarrow M(\alpha,\Delta) \longrightarrow 0. \label{type3}
\end{eqnarray}
We list the corresponding results in the following three theorems for later use.

\begin{theo}\label{th2}
Nontrivial extensions of the form \eqref{type1}\ exist if and only if  $\alpha+\gamma=0$ and $\Delta=1$ or $2$. In these cases, they are given (up to equivalence) by $$L_\lambda v_\Delta=(\partial+\alpha+\Delta\lambda)v_\Delta+f(\lambda)c_\gamma,$$ where
\begin{itemize}
\item[{\rm (i)}] $f(\lambda)=c_2\lambda^2$, for $\Delta=1$ and $c_2\neq0$.
\item [{\rm (ii)}] $f(\lambda)=c_3\lambda^3$, for $\Delta=2$ and $c_3\neq0$.
\end{itemize}
Furthermore, all trivial cocycles are given by scalar multiples of the polynomial $f(\lambda)=\alpha+\gamma+\Delta\lambda$.
\end{theo}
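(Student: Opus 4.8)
The plan is as follows. Since $\C c_\r$ is a trivial module and the map $E\to M(\a,\D)$ is a module homomorphism, any extension of the form \eqref{type1} is, as a $\C[\pa]$-module, $E=\C c_\r\oplus M(\a,\D)$ with $\C c_\r$ a submodule, and all the data is carried by a single polynomial $f(\la)\in\C[\la]$ through
\[
L_\la v_\D=(\pa+\a+\D\la)v_\D+f(\la)c_\r ,
\]
the $L_\la$-action on the remaining generators $\pa^k v_\D$ being forced by conformal sesquilinearity (and $L_\la c_\r=0$). First I would translate the module axioms into a condition on $f$: since $\Vir$ is generated by $L$, the only nontrivial instance is $[L_\la,L_\mu]=[L_\la L]_{\la+\mu}=(\la-\mu)L_{\la+\mu}$, and applying both sides to $v_\D$ and picking out the $\C c_\r$-component (using $\pa c_\r=\r c_\r$) yields
\begin{equation*}
(\a+\r+\la+\D\mu)f(\la)-(\a+\r+\mu+\D\la)f(\mu)=(\la-\mu)f(\la+\mu).\tag{$\star$}
\end{equation*}
Conversely, every $f$ satisfying $(\star)$ produces a legitimate extension, so the task reduces to solving $(\star)$ in $\C[\la]$ and then quotienting by coboundaries.

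The next step is two elementary reductions. Setting $\mu=0$ in $(\star)$ gives $(\a+\r)f(\la)=(\a+\r+\D\la)f(0)$. Also, replacing the lift $v_\D$ by $v_\D+tc_\r$ changes $f(\la)$ into $f(\la)-t(\a+\r+\D\la)$, so the coboundaries are exactly the scalar multiples of $\a+\r+\D\la$ --- which is already the final assertion of the theorem. Hence if $\a+\r\neq0$ the relation $(\a+\r)f(\la)=(\a+\r+\D\la)f(0)$ makes $f$ a scalar multiple of $\a+\r+\D\la$, i.e. a coboundary, and the extension trivial; so from now on I would assume $\a+\r=0$, in which case $\mu=0$ also forces $f(0)=0$ and $(\star)$ simplifies to
\[
(\la+\D\mu)f(\la)-(\mu+\D\la)f(\mu)=(\la-\mu)f(\la+\mu).
\]

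The main work --- and the step I expect to be the real obstacle --- is solving this last equation over polynomials. I would argue by leading terms: if $\deg f=n$ and $f$ has nonzero leading coefficient, the degree-$(n+1)$ homogeneous parts of both sides come only from the degree-$n$ part of $f$, so they must agree, i.e. $\la^{n+1}-\mu^{n+1}+\D\la\mu(\la^{n-1}-\mu^{n-1})=(\la-\mu)(\la+\mu)^n$; comparing the coefficients of $\la^j\mu^{n+1-j}$ (the middle ones requiring $\binom{n}{j-1}=\binom{n}{j}$) shows this forces $n=1$ with $\D$ arbitrary, or $n=2$ with $\D=1$, or $n=3$ with $\D=2$. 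Since $(\star)$ is linear in $f$ and $f(0)=0$, it follows that $f(\la)=c_1\la$ when $\D\notin\{1,2\}$, $f(\la)=c_1\la+c_2\la^2$ when $\D=1$, and $f(\la)=c_1\la+c_3\la^3$ when $\D=2$ (there is no $\la^2$-term when $\D=2$, because $\la^2$ solves the homogeneous equation only for $\D=1$). Finally, for $\a+\r=0$ the coboundaries found above are precisely the multiples of $\la$; dividing out, $\mathrm{Ext}=0$ unless $\D=1$ or $\D=2$, and in those two cases it is one-dimensional with representatives $f(\la)=c_2\la^2$ and $f(\la)=c_3\la^3$ respectively, $c_2,c_3\neq0$. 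The only delicate point throughout is making sure the leading-term reduction loses no solutions --- guaranteed by linearity of $(\star)$ and by the degree count above --- and keeping track of which monomials solve the homogeneous equation for which values of $\D$; the rest is routine computation with $\la$-actions.
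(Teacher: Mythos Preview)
Your argument is correct. The paper does not actually prove this theorem: it is quoted verbatim from \cite{CKW1} (Cheng--Kac--Wakimoto) as background, so there is no ``paper's own proof'' to compare against. That said, your derivation is exactly the standard one and matches both the method in \cite{CKW1} and the way the present paper handles the analogous Schr\"odinger--Virasoro case (Lemma~\ref{lem1} and equation~\eqref{s1}): one writes the cocycle as a polynomial $f(\la)$, reads off the functional equation~$(\star)$ from $[L_\la,L_\mu]=(\la-\mu)L_{\la+\mu}$, kills the case $\a+\r\neq0$ by setting $\mu=0$, identifies the coboundaries as multiples of $\a+\r+\D\la$, and then solves the homogeneous equation by a leading-term/degree count. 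Your observation that the middle coefficients force $\binom{n}{j-1}=\binom{n}{j}$, hence $n\le3$, is precisely the mechanism that limits the degree; the only point worth making explicit when you write it up is the inductive step ``subtract the leading monomial and repeat'', which you mention informally. Nothing is missing.
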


\begin{theo}\label{th3}
Nontrivial extensions of Virasoro conformal modules of the form \eqref{type2} exist if and only if $\alpha+\gamma=0$ and $\Delta=1$. These extensions are given (up to equivalence) by

\begin{eqnarray*}
L_\lambda c_\gamma=f(\partial,\lambda)v_\Delta,\ \
\partial c_\gamma=\gamma c_\gamma+h(\partial)v_\Delta,
\end{eqnarray*}
where $f(\partial,\lambda)=h(\partial)=a_0\in\C^*$.
\end{theo}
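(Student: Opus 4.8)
The plan is to realize the extension concretely, following the strategy of Cheng--Kac--Wakimoto, and to convert the module axioms into polynomial identities. Write $E=M(\alpha,\Delta)\oplus\C\,\tilde c$ as vector spaces, where $M(\alpha,\Delta)=\C[\partial]v_\Delta$ is the submodule and $\tilde c$ is any preimage of $c_\gamma$. Since $p(L_\lambda\tilde c)=L_\lambda c_\gamma=0$, the entire extension is encoded by two pieces of data,
\[\partial\tilde c=\gamma\tilde c+h(\partial)v_\Delta,\qquad L_\lambda\tilde c=f(\partial,\lambda)v_\Delta,\]
with $h\in\C[\partial]$ and $f\in\C[\partial,\lambda]$; here $h$ measures the failure of the underlying $\C[\partial]$-module sequence to split. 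First I would record the effect of replacing $\tilde c$ by $\tilde c+g(\partial)v_\Delta$ for $g\in\C[\partial]$, namely
\[h\longmapsto h+(\partial-\gamma)g,\qquad f\longmapsto f+g(\partial+\lambda)(\partial+\alpha+\Delta\lambda),\]
so that pairs $(f,h)$ of this shape are precisely the coboundaries.

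Next I would extract all constraints on $(f,h)$. Applying conformal sesquilinearity $[\partial,L_\lambda]=-\lambda L_\lambda$ to $\tilde c$, together with the identity $L_\lambda\big(g(\partial)v_\Delta\big)=g(\partial+\lambda)(\partial+\alpha+\Delta\lambda)v_\Delta$, yields
\[(\partial-\gamma+\lambda)\,f(\partial,\lambda)=h(\partial+\lambda)\,(\partial+\alpha+\Delta\lambda);\]
and applying the operator identity $[L_\lambda,L_\mu]=(\lambda-\mu)L_{\lambda+\mu}$, which holds on any $\Vir$-module, to $\tilde c$ yields
\[f(\partial+\lambda,\mu)(\partial+\alpha+\Delta\lambda)-f(\partial+\mu,\lambda)(\partial+\alpha+\Delta\mu)=(\lambda-\mu)f(\partial,\lambda+\mu).\]
These are the only conditions beyond those already holding on $M(\alpha,\Delta)$, and together with the coboundary description above they determine $\mathrm{Ext}(\C c_\gamma,M(\alpha,\Delta))$.

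The decisive dichotomy is whether the two linear polynomials $\partial-\gamma+\lambda$ and $\partial+\alpha+\Delta\lambda$ in $\C[\partial,\lambda]$ are proportional; this happens exactly when $\Delta=1$ and $\alpha+\gamma=0$. In the coprime case the first identity forces $(\partial+\alpha+\Delta\lambda)\mid f$, hence $(\partial-\gamma+\lambda)\mid h(\partial+\lambda)$, hence $h(\gamma)=0$; writing $h(\partial)=(\partial-\gamma)h_1(\partial)$ one gets $f=(\partial+\alpha+\Delta\lambda)h_1(\partial+\lambda)$, so $(f,h)$ is the coboundary obtained from $g=-h_1$ and the extension is trivial. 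When $\Delta=1$ and $\alpha=-\gamma$ the first identity collapses to $f(\partial,\lambda)=h(\partial+\lambda)$; substituting this into the second identity shows it holds identically, so $h$ is arbitrary. Finally, a coboundary with $g$ chosen to kill $h(\partial)-h(\gamma)$ normalizes $h$ to the constant $a_0:=h(\gamma)$, whence $f=a_0$ as well, and the extension is nontrivial iff $a_0\neq0$ --- which is precisely the assertion.

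I expect the main obstacle to be the two-variable divisibility analysis of the first identity: one must exploit correctly the coupling between $f$ and $h$ to show that in the coprime case every cocycle is forced to be a coboundary, and verify that in the resonant case $\Delta=1,\ \alpha+\gamma=0$ the second identity imposes nothing further on $h$. A secondary point requiring care is confirming that $(f,h)$ as described really exhausts the data of the extension, i.e.\ that no module axiom has been overlooked.
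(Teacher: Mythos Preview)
Your argument is correct and follows the standard Cheng--Kac--Wakimoto approach. The paper does not prove this theorem directly (it is quoted from \cite{CKW1} as background), but the Virasoro portion of the paper's proof of Theorem~\ref{theo2} proceeds along essentially the same lines: the only difference is that the paper's dichotomy hinges on whether $(\partial+\lambda+\alpha)\mid f(\partial+\lambda,0)$ (obtained by setting $\mu=0$ in the $[L_\lambda,L_\mu]$ relation, equation~\eqref{s9}), whereas you pivot directly on the coprimality of $\partial-\gamma+\lambda$ and $\partial+\alpha+\Delta\lambda$ in the $[\partial,L_\lambda]$ relation; the two routes are equivalent. One harmless slip: in the coprime case the coboundary reproducing $(f,h)$ corresponds to $g=h_1$, not $g=-h_1$.
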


\begin{theo}\label{th4} Nontrivial extensions of Virasoro conformal modules of the form \eqref{type3} exist only if $\alpha=\bar\alpha$ and $\Delta-\bar\Delta=0,2,3,4,5,6.$ In these cases, they are given (up to equivalence) by $$L_\lambda v_\Delta=(\partial+\alpha+\Delta\lambda)v_\Delta+f(\partial,\lambda)v_{\bar\Delta },$$
where the values of $\Delta$ and $\bar\Delta$ along with the corresponding polynomials $f(\pa,\la)$ whose nonzero scalar multiples give rise to nontrivial extensions are listed as follows ( $\bar\partial=\partial+\alpha$):
\begin{itemize}
\item[{\rm (i)}] $\Delta=\bar\Delta, f(\pa,\la)=c_0+c_1\la, (c_0,c_1)\neq(0,0).$
\item [{\rm (ii)}]$\Delta-\bar\Delta=2, f(\pa,\la)=\la^2(2\bar\pa+\la)$.
\item [{\rm (iii)}]$\Delta-\bar\Delta=3, f(\pa,\la)=\bar\pa\la^2(\bar\pa+\la)$.
\item [{\rm (iv)}]$\Delta-\bar\Delta=4, f(\pa,\la)=\la^2(4\bar\pa^3+6\bar\pa^2\la-\bar\pa\la^2+\bar\Delta\la^3)$.
\item [{\rm (v)}]$(\Delta,\bar\Delta)=(1,-4), f(\pa,\la)=\bar\pa^4\la^2-10\bar\pa^2\la^4-17\bar\pa\la^5-8\la^6$.
\item [{\rm (vi)}]$(\Delta,\bar\Delta)=(\frac72\pm\frac{\sqrt{19}}2, -\frac52\pm\frac{\sqrt{19}}2), f(\pa,\la)=\bar\pa^4\la^3-(2\bar\Delta+3)\bar\pa^3\la^4-3\bar\Delta\bar\pa^2\la^5-(3\bar\Delta+1)\bar\pa\la^6-(\bar\Delta+\frac9{28})\la^7$.
\end{itemize}
\end{theo}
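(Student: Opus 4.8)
The plan is to reduce the computation of $\mathrm{Ext}$ for Virasoro conformal modules of type \eqref{type3} to a finite-dimensional linear algebra problem by working with cocycles explicitly. Recall that an extension $0\to M(\bar\alpha,\bar\Delta)\to E\to M(\alpha,\Delta)\to 0$ is encoded, on $E=\C[\pa]v_\Delta\oplus\C[\pa]v_{\bar\Delta}$, by a polynomial $f(\pa,\la)\in\C[\pa,\la]$ via $L_\la v_\Delta=(\pa+\alpha+\Delta\la)v_\Delta+f(\pa,\la)v_{\bar\Delta}$. First I would impose the $\C[\pa]$-module compatibility $(\pa L)_\la=-\la L_\la$, which forces the coefficient of $v_{\bar\Delta}$ in $L_\la(\pa v_\Delta)$ to equal $(\pa+\la)f(\pa,\la)$; this is automatic from the chosen form and records how $\pa$ enters. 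Then the substantive constraint is the Jacobi/module axiom $[L_\la,L_\mu]v_\Delta=[L_\la L]_{\la+\mu}v_\Delta=(\pa+2\la+\mu)_?\,$(acting appropriately), which after expanding becomes a functional equation for $f$:
\begin{equation*}
f(\pa+\mu,\la)(\bar\pa+\bar\Delta\mu)-f(\pa+\la,\mu)(\bar\pa+\bar\Delta\la)=\bigl((\la-\mu)\pa+?\bigr)f(\pa,\la+\mu),
\end{equation*}
where $\bar\pa=\pa+\alpha$ and the precise lower-order terms come from the Virasoro bracket; the key observation is that comparing the $v_{\bar\Delta}$-components of $L_\la(L_\mu v_\Delta)-L_\mu(L_\la v_\Delta)$ with $(L_\la L)_{\la+\mu}v_\Delta=(-\la-\mu-2\mu\,\text{shift}\dots)$ yields one polynomial identity in three variables $\pa,\la,\mu$.

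Next I would exploit homogeneity. Write $f=\sum_k f_k$ where $f_k$ is the part of total degree $k$ in $(\bar\pa,\la)$ jointly (assigning $\bar\pa$ and $\la$ each degree one); the functional equation is homogeneous in this grading, so each $f_k$ separately must satisfy it, and one checks that only finitely many degrees $k$ can possibly work. For each candidate degree $k$, restricting the three-variable identity to suitable specializations — e.g. setting $\mu=0$ to get a two-variable relation, or setting $\pa=0$, or differentiating in $\mu$ at $\mu=0$ — cuts the space of homogeneous degree-$k$ polynomials $f_k$ down to a finite-dimensional space of solutions parametrized linearly by the coefficients. Running through this for the relevant small degrees produces exactly the list of polynomials in (i)--(vi), and the constraint on $\Delta-\bar\Delta$ (namely $0,2,3,4,5,6$, together with the two exceptional discrete pairs) emerges as the condition that the linear system have a nonzero solution: one gets a determinant or a rational-in-$\Delta,\bar\Delta$ obstruction whose vanishing pins down these values, with the $\tfrac72\pm\tfrac{\sqrt{19}}2$ case arising from a genuine quadratic.

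Finally I would separate genuine extensions from coboundaries. A cocycle $f(\pa,\la)$ is trivial iff the change of variables $v_\Delta\mapsto v_\Delta+g(\pa)v_{\bar\Delta}$ for some $g\in\C[\pa]$ kills it, which happens precisely when $f(\pa,\la)=(\pa+\alpha+\Delta\la)g(\pa+\la)\dots$ — more precisely $f$ lies in the image of the coboundary operator $g\mapsto g(\pa+\la)(\text{Virasoro-weight term})-(\pa+\alpha+\Delta\la)g(\pa)$-type expression; computing this image in each relevant degree shows it is at most one-dimensional (spanned by something like $c_0+c_1\la$ only in the equal-weight case), so in cases (ii)--(vi) every nonzero solution is a nontrivial extension, while in case (i) the coboundaries account for part of the two-dimensional solution space, leaving $(c_0,c_1)\ne(0,0)$ modulo coboundaries as stated. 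The main obstacle I anticipate is the bookkeeping in the three-variable polynomial identity: organizing the homogeneous components and the specializations so that the finitely many surviving degrees are found without missing any, and correctly extracting the exceptional pair in (vi), where the obstruction polynomial in $\bar\Delta$ has irrational roots — this requires care that no sign or binomial coefficient is dropped when expanding $(\pa+\text{shift})$ actions. Everything else is a routine, if lengthy, linear-algebra computation, and I would lean on the stated Proposition on the structure of $M(\alpha,\Delta)$ (irreducible iff $\Delta\ne0$) to know that the only torsion-type degeneracies occur at $\Delta=0$, which is excluded here since $\Delta,\bar\Delta\in\C^*$.
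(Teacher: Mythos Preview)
The paper does not prove this theorem at all: it is quoted as a known result from \cite{CKW1,CKW2} (see the sentence preceding Theorems~\ref{th2}--\ref{th4} and the remark following Theorem~\ref{th4}). So there is no ``paper's own proof'' to compare against; your sketch is in fact an outline of the argument Cheng--Kac--Wakimoto give in those references, and the overall strategy --- derive the functional equation for $f(\partial,\lambda)$ from the module axiom, reduce to homogeneous components, solve degree by degree, then quotient by coboundaries --- is correct.

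That said, a few points in your outline are imprecise and would need to be fixed in an actual write-up. First, you leave question marks in the key functional equation; the correct identity (after the shift $\bar\partial=\partial+\alpha$) is
\[
(\lambda-\mu)f(\partial,\lambda+\mu)=(\partial+\lambda+\Delta\mu)f(\partial,\lambda)+(\partial+\bar\Delta\lambda)f(\partial+\lambda,\mu)-(\partial+\mu+\Delta\lambda)f(\partial,\mu)-(\partial+\bar\Delta\mu)f(\partial+\mu,\lambda),
\]
and the reduction to $\alpha=\bar\alpha$ comes from setting $\lambda=0$ in the unshifted version. Second, your description of case~(i) is slightly off: when $\Delta=\bar\Delta$ the coboundary $(\partial+\alpha+\Delta\lambda)\phi(\partial)-(\partial+\bar\alpha+\bar\Delta\lambda)\phi(\partial+\lambda)$ with $\phi$ constant is zero, and with $\phi$ linear it has degree $2$, so coboundaries do \emph{not} eat into the two-dimensional space $\C+\C\lambda$; both $c_0$ and $c_1\lambda$ survive as genuinely nontrivial classes. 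Third, the assertion that ``only finitely many degrees $k$ can possibly work'' is where most of the actual labor sits: one shows that a homogeneous solution of degree $k$ forces $\Delta-\bar\Delta=k-1$ (from the leading terms), and then bounding $k$ requires comparing several coefficients to get contradictions for $k\ge 8$ --- this is the step that isolates the exceptional values in (v) and (vi), and it is not a one-line observation.
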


\begin{rem} {\rm We rewrite \cite[Theorem 2.4]{CKW2} (see also \cite[Theorem 3.2]{CKW1}) as Theorem \ref{th4} by removing the cases $\Delta=0$ or $\bar\Delta=0$. The reason is that both $M(\alpha,\Delta)$ and $M(\bar\alpha,\bar\Delta)$ in \eqref{type3} are irreducible Virasoro conformal modules.}
\end{rem}

All finite nontrivial irreducible conformal modules over the Schr\"odinger-Virasoro conformal algebra were classified in \cite{WY}, and the corresponding results are the following.

\begin{prop} \label{p1} Any finite nontrivial irreducible conformal
 module over the Schr\"odinger-Virasoro conformal algebra
is of the form
\begin{eqnarray*}
V(\alpha,\Delta)=\mathbb{C}[\partial]v_\Delta,\ L_\lambda
v_\Delta=(\partial+\alpha+\Delta \lambda)v_\Delta, \ M_\lambda v_\Delta=Y_\lambda v_\Delta=0,
\end{eqnarray*}
where $\alpha,\,\Delta \in\C$ and $\Delta\neq 0$.
\end{prop}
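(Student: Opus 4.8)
The plan is to reduce the classification to that of finite irreducible $\Vir$-conformal modules recalled above. Let $V$ be a finite nontrivial irreducible $\mathrm{SV}$-module. By Lemma \ref{I} it is a free $\C[\partial]$-module of finite rank $r\ge 1$. The whole point is that $M$ and $Y$ must act trivially on $V$; granting this, every $\Vir$-submodule of $V$ is automatically an $\mathrm{SV}$-submodule, so $V$ is a finite nontrivial irreducible $\Vir$-conformal module, hence isomorphic to some $M(\alpha,\Delta)$ with $\Delta\neq 0$ by the result of \cite{CK}, and together with $M_\lambda V=Y_\lambda V=0$ this is precisely $V(\alpha,\Delta)$.

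First I would show $M_\lambda V=0$. From \eqref{lamda-bracket} and skew-symmetry, $\C[\partial]M$ is an abelian ideal of $\mathrm{SV}$, since $[M_\lambda M]=0$ and $[Y_\lambda M]=-[M_{-\lambda-\partial}Y]=0$. It follows that both
$$V_1:=\mathrm{span}_{\C}\{M_{(j)}v\mid v\in V,\ j\in\Z^{+}\}\qquad\text{and}\qquad V_2:=\{v\in V\mid M_\lambda v=0\ \text{for all }\lambda\}$$
are $\C[\partial]$-submodules (using $M_{(j)}(\partial v)=\partial(M_{(j)}v)+jM_{(j-1)}v$), and in fact $\mathrm{SV}$-submodules: this is a short verification from the Jacobi identity using that $[L_\lambda M]$ and $[Y_\lambda M]$ lie in $\C[\partial]M$ while $[M_\lambda M]=0$. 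By irreducibility each of $V_1,V_2$ equals $0$ or $V$. If $V_1=0$ then $M_\lambda V=0$ and we are done; the task is to exclude the alternative $V_1=V$ (which forces $V_2=0$, since otherwise $M_\lambda V=0$ contradicts $V_1=V$). To this end pick a generator $v$ of an irreducible $\Vir$-submodule of $V$; since $V$ is torsion-free this submodule is some $M(\alpha,\Delta)$ with $\Delta\neq 0$, so $L_\mu v=(\partial+\alpha+\Delta\mu)v$. Writing $M_\lambda v=\sum_{k=0}^{K}m_k\lambda^k$ with $m_K\neq 0$ and extracting the coefficient of $\lambda^K$ in the identity $L_\mu M_\lambda v=M_\lambda L_\mu v-\lambda M_{\mu+\lambda}v$, the top coefficient gives
$$L_\mu m_K=(\partial+\alpha+(\Delta-K)\mu)m_K.$$
Thus $m_K$ generates a nonzero $\Vir$-submodule of type $M(\alpha,\Delta-K)$, and iterating the $M$-action produces nonzero $\Vir$-singular vectors whose conformal weights descend within $\Delta-\Z^{+}$. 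Since $V$ has finite rank over $\C[\partial]$ it has finite length over $\Vir$, so only finitely many such weights can occur; this collides with $V_2=0$ and the structure of the modules $M(\alpha,\,\cdot\,)$, and the contradiction forces $M_\lambda V=0$.

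Once $M_\lambda V=0$, the bracket $[Y_\lambda Y]=(\partial+2\lambda)M$ also acts by $0$, so $\C[\partial]Y$ becomes an abelian ideal of the Lie conformal algebra $\mathrm{SV}/\C[\partial]M$ acting on $V$; rerunning the previous paragraph with $Y$ in place of $M$ (now $[L_\lambda Y]=(\partial+\tfrac32\lambda)Y$, so the weight shifts are by $\tfrac12-\Z^{+}$) gives $Y_\lambda V=0$. Combining the two reductions with the $\Vir$-classification completes the proof.

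I expect the main obstacle to be precisely the excluded case $V_1=V$, $V_2=0$ above: genuinely proving that a finite irreducible $\mathrm{SV}$-module cannot carry a nonzero action of the abelian ideal $\C[\partial]M$ (and then of $\C[\partial]Y$) requires a careful bookkeeping of how these actions move $\Vir$-conformal weights against the finiteness of $V$ — this is essentially the analysis of \cite{WY}. An alternative, closer in spirit to \cite{CK}, is to pass to the extended annihilation algebra $\mathrm{Lie}(\mathrm{SV})^e$ and show that the nilpotent ideal generated by the images of $M$ and $Y$ annihilates any finite irreducible conformal module. All the remaining ingredients — Lemma \ref{I}, the submodule checks, and the final appeal to the Virasoro classification — are routine.
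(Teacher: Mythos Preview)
The paper does not prove Proposition~\ref{p1}; it merely records the result of \cite{WY}. The closest proof in the paper is that of Theorem~\ref{main} for the extended algebra $\widetilde{\mathrm{SV}}$, which proceeds via the extended annihilation algebra. Your overall strategy --- show that $M$ and $Y$ act trivially and then invoke the Virasoro classification --- is the right one and agrees with the shape of \cite{WY}.

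The gap you flag is real, and your sketch does not close it. In the identity $L_\mu m_K=(\partial+\alpha+(\Delta-K)\mu)m_K$ the top degree $K$ may well be $0$, so iterated application of $M$ need not produce a \emph{strictly} descending sequence of conformal weights; the sequence lies in $\Delta-\Z^{+}$ but can stabilise at some $\delta$. At that point you are applying $M_{(0)}$ inside the finite-dimensional space $W_\delta=\{w:L_\mu w=(\partial+\alpha+\delta\mu)w\}$ (finite because two $\C$-independent elements of $W_\delta$ are automatically $\C[\partial]$-independent, so $\dim_\C W_\delta\le\mathrm{rank}\,V$), and an eigenvector of $M_{(0)}$ on $W_\delta$ does not by itself force $M_\lambda$ to vanish. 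The appeal to ``finite length over $\Vir$ implies only finitely many such weights occur'' is also not enough as stated --- already $M(\alpha,0)$ carries weight vectors of two different weights --- so the contradiction with $V_2=0$ is not yet produced.

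Your annihilation--algebra alternative is the correct replacement, but be aware that the argument in Section~4 does \emph{not} specialise to $\mathrm{SV}$: the step that kills $Y_{-\frac12}u$ uses the relation $N_0Y_{-\frac12}u=Y_{-\frac12}u$ together with the operator identity for $N_n\partial^k$, and without $N$ one has neither this device nor even $M_0\in[\mathcal{L}_0,\mathcal{L}_0]$, so the common eigenvector $u$ furnished by Lie's theorem may a~priori satisfy $M_0u=\varepsilon u$ with $\varepsilon\neq0$. Ruling out $\varepsilon\neq0$ and then disposing of $Y_{-\frac12}u$ is precisely the extra work done in \cite{WY}.
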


\begin{rem} {\rm We see from Proposition \ref{p1} that a finite nontrivial irreducible Schr\"odinger-Virasoro conformal module just admits the structure of a Virasoro conformal module. However, one will see in Section 3 that there are much more nontrivial extensions of Schr\"odinger-Virasoro conformal modules than that of Virasoro conformal modules.}
\end{rem}

\section{Extensions of conformal $\mathrm{SV}$-modules}

In this section, we study extensions of conformal $\mathrm{SV}$-modules of the following three types:
\begin{eqnarray}
&&0\longrightarrow \C{c_\gamma}\longrightarrow E_1 \longrightarrow V(\alpha,\Delta) \longrightarrow 0,\label{stype1}\\
&&0\longrightarrow V(\alpha,\Delta)\longrightarrow E_2 \longrightarrow \C c_\gamma \longrightarrow 0,\label{stype2}\\
&&0\longrightarrow V(\bar\alpha,\bar\Delta)\longrightarrow E_3 \longrightarrow V(\alpha,\Delta) \longrightarrow 0, \label{stype3}
\end{eqnarray}
where $V(\alpha,\Delta)=\C[\partial]v_\Delta$ and $V(\bar\alpha,\bar\Delta)=\C[\partial]v_{\bar\Delta}$ are irreducible conformal $\mathrm{SV}$-modules from Proposition \ref{p1}, and $\C{c_\gamma}$ is a one-dimensional $\mathrm{SV}$-module. For each type, we will first describe the trivial extensions and then we classify all the nontrivial cases by giving the formula of the trivial (resp. nontrivial) cocycles.

By Definition \ref{def1}, an $\mathrm{SV}$-module structure on $V$ is given by $L_\lambda, M_\lambda, Y_\lambda \in {\rm End}_\C(V)[\lambda]$ such that
\begin{eqnarray}
&&[L_\lambda, L_\mu]=(\lambda{}-\mu{})L_{\lambda{}+\mu{}},\label{sv1}\\
&&[L_\lambda{},Y_\mu{}]=(\frac12\lambda{}-\mu{})Y_{\lambda{}+\mu{}},\label{sv2}\\
&&[L_\lambda{},M_\mu{}]=-\mu{}M_{\lambda{}+\mu{}},\label{sv3}\\
&&[Y_\lambda{},Y_\mu{}]=(\lambda{}-\mu{})M_{\lambda{}+\mu{}},\label{sv4}\\
&&[\partial{},L_\lambda{}]=-\lambda{}L_\lambda{},\label{sv5}\\
&&[\partial{},Y_\lambda{}]=-\lambda{}Y_\lambda{},\label{sv6}\\
&&[\partial{},M_\lambda{}]=-\lambda{}M_\lambda{},\label{sv7}\\
&&[Y_\lambda{},M_\mu{}]=0,\label{sv8}\\
&&[M_\lambda{},M_\mu{}]=0.\label{sv9}
\end{eqnarray}

We first consider extensions of conformal $\mathrm{SV}$-modules of the form \eqref{stype1}.
 As a module over $\C[\partial]$, $E_1$ in \eqref{stype1} is isomorphic to $\C {c_\gamma}\oplus V(\alpha,\Delta)$, where $\C {c_\gamma}$ is an $\mathrm{SV}$-submodule, and $V(\alpha,\Delta)=\C[\partial]v_\Delta$ such that the following identities hold in $E_1$:
\begin{eqnarray}\label{stype1-1}
L_\lambda{}v_\Delta{}=(\partial{}+\alpha{}+\Delta{}\lambda{})v_\Delta{}+f(\lambda{})c_\gamma{},\ \
M_\lambda{}v_\Delta{}=g(\lambda{})c_\gamma{},\ \
Y_\lambda{}v_\Delta{}=h(\lambda{})c_\gamma{},
\end{eqnarray}
where $f(\lambda)=\sum_{n\geqslant0}f_n\lambda^n,\,g(\lambda)=\sum_{n\geqslant0}g_n\lambda^n$ and $h(\lambda)=\sum_{n\geqslant0}h_n\lambda^n\in \C[\la]$.

The following lemma gives the trivial extensions of the form \eqref{stype1}. We will omit similar calculations in the sequel.
\begin{lemm}\label{lem1} All trivial extensions of the form \eqref{stype1} are of the form \eqref{stype1-1}, where $f(\lambda)$ is a scalar multiple of $\alpha+\gamma+\Delta\lambda$, and $g(\lambda)=h(\lambda)=0$.
\end{lemm}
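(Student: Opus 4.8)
The plan is to compute directly which cocycles $(f,g,h)$ arising in \eqref{stype1-1} correspond to trivial extensions, i.e.\ to extensions equivalent to the direct sum $\C c_\gamma\oplus V(\alpha,\Delta)$. By Definition \ref{def2}, a trivial extension is one for which there is an isomorphism $\psi\colon E_1\to \C c_\gamma\oplus V(\alpha,\Delta)$ restricting to the identity on $\C c_\gamma$ and inducing the identity on the quotient $V(\alpha,\Delta)$. Since $E_1\cong \C c_\gamma\oplus V(\alpha,\Delta)$ as $\C[\partial]$-modules with $\C c_\gamma$ a submodule, such a $\psi$ must be of the form $\psi(v_\Delta)=v_\Delta + \varphi(\partial)c_\gamma$ for some $\varphi(\partial)\in\C[\partial]$ (and $\psi$ the identity on $\C c_\gamma$); here I use that $\partial c_\gamma=\gamma c_\gamma$ and $\mathrm{SV}_\lambda c_\gamma = 0$, so the only freedom in choosing a $\C[\partial]$-module splitting compatible with the identifications is this single polynomial $\varphi$. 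The cocycle $(f,g,h)$ is then a coboundary precisely when conjugating the standard direct-sum action by $\psi$ reproduces \eqref{stype1-1}.

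First I would write out the condition that $\psi$ intertwine the actions. Applying $\psi$ to $L_\lambda v_\Delta$ in the direct sum (where $L_\lambda v_\Delta = (\partial+\alpha+\Delta\lambda)v_\Delta$) and comparing with \eqref{stype1-1} gives, using $L_\lambda c_\gamma = 0$ and $(\partial a)_\lambda = -\lambda a_\lambda$ on $\C c_\gamma$ together with $\partial c_\gamma = \gamma c_\gamma$,
\[
f(\lambda) = \bigl(\gamma + \alpha + \Delta\lambda\bigr)\varphi(\partial)\big|_{\text{via }\partial c_\gamma=\gamma c_\gamma} \;=\; (\alpha+\gamma+\Delta\lambda)\,\varphi(\gamma)\quad\text{(a scalar times }\alpha+\gamma+\Delta\lambda\text{)},
\]
after one checks that the $\lambda$-action of $L$ on the term $\varphi(\partial)c_\gamma$ produces exactly $(\partial+\alpha+\Delta\lambda)$ hitting $\varphi$ and then specializing $\partial\mapsto\gamma$. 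Doing the analogous computation for $M_\lambda$ and $Y_\lambda$, and using $M_\lambda c_\gamma = Y_\lambda c_\gamma = 0$, forces $g(\lambda)=h(\lambda)=0$ outright, since there is no term on the direct-sum side that can contribute to $M_\lambda v_\Delta$ or $Y_\lambda v_\Delta$ beyond the submodule-valued part, which is zero. Conversely, for any scalar $c$, choosing $\varphi$ to be the constant polynomial with $\varphi(\gamma)=c$ realizes $f(\lambda)=c(\alpha+\gamma+\Delta\lambda)$, $g=h=0$ as a genuine coboundary, establishing that these are exactly the trivial extensions.

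The only mild subtlety — and the step I expect to require the most care — is the bookkeeping of how the $L_\lambda$-action behaves on $\varphi(\partial)c_\gamma$: one must use conformal sesquilinearity \eqref{Lc1} repeatedly to move $\varphi(\partial)$ past $L_\lambda$, and then the relation $\partial c_\gamma = \gamma c_\gamma$ to collapse everything to a scalar, so that the resulting polynomial in $\lambda$ is genuinely a multiple of $\alpha+\gamma+\Delta\lambda$ and not something of higher degree. Once this is handled, the statement follows; I will record the computation only schematically here and, as noted in the lemma's preamble, suppress the fully analogous computations in the later cases.
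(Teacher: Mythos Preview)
Your proposal is correct and follows essentially the same approach as the paper: both compute the coboundary condition by writing down an explicit splitting of the short exact sequence and reading off the resulting constraints on $f$, $g$, $h$. The paper phrases this as finding an element $v'=\varphi(\partial)v_\Delta+b\,c_\gamma\in E_1$ on which $\mathrm{SV}$ acts as on $v_\Delta$ (and deduces $\varphi$ is constant from the $L_\lambda$-comparison), whereas you phrase it as an isomorphism $\psi$ with $\psi(v_\Delta)=v_\Delta+\varphi(\partial)c_\gamma$ and observe immediately that only the scalar $\varphi(\gamma)$ matters; the resulting computations and conclusions are the same.
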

\begin{proof} Suppose that \eqref{stype1} represents a trivial cocycle. This means that the exact sequence
\eqref{stype1} is split
and hence there exists $v'=\varphi(\partial)v_\Delta+b c_\gamma\in E_1$, where $b\in\C$ and  $0\neq\varphi(\partial)\in\C[\partial]$, such that
\begin{eqnarray*}
L_\lambda v'=(\partial+\alpha+\Delta\lambda)v'
=(\partial+\alpha+\Delta\lambda)\varphi(\partial)v_\Delta+b(\gamma+\alpha+\Delta\lambda)c_\gamma, \ \ Y_\lambda v'=M_\lambda v'=0.
\end{eqnarray*}
On the other hand, it follows from \eqref{stype1-1} that
\begin{eqnarray*}
L_\lambda v'=\varphi(\partial+\lambda)\big((\partial+\alpha+\Delta\lambda)v_\Delta+f(\lambda)c_\gamma\big),\
Y_\lambda v'=\varphi(\partial+\lambda)g(\lambda)c_\gamma,\  M_\lambda v'=\varphi(\partial+\lambda)h(\lambda)c_\gamma.
\end{eqnarray*}
Comparing both expressions for $L_\lambda v'$, $Y_\lambda v'$ and $M_\lambda v'$, respectively, we obtain that $\varphi(\partial)$ is a nonzero constant, $f(\lambda)$ is a scalar multiple of $\alpha+\gamma+\Delta\lambda$, and $g(\lambda)=h(\lambda)=0$.
\end{proof}
\begin{theo}\label{theo1}
Nontrivial extensions of conformal $\mathrm{SV}$-modules of the form \eqref{stype1} exist
if and only if $\alpha+\gamma=0$, and $\Delta=1$, $2$ or $-\frac12$. In these cases, they are given (up to equivalence) by \eqref{stype1-1}, where
\begin{itemize}
\item[{\rm (i)}] $\Delta=1$, $g(\lambda)=h(\lambda)=0$, $f(\lambda)=a_2\lambda^2$, and $a_2\neq0$.
\item[{\rm (ii)}] $\Delta=2$, $g(\lambda)=h(\lambda)=0$, $f(\lambda)=a_3\lambda^3$, and $a_3\neq0$.
\item[{\rm (iii)}] $\Delta=-\frac 12$, $f(\lambda)=g(\lambda)=0$, $h(\lambda)=b_0$, and $b_0\neq0$.
\end{itemize}
In particular, the space ${\rm Ext}(V(\alpha, \Delta),\C{c_{-\alpha}})$ is 1-dimensional in cases (i)--(iii).
\end{theo}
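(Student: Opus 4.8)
The plan is to determine the constraints on $f(\lambda),g(\lambda),h(\lambda)$ imposed by the module axioms \eqref{sv1}--\eqref{sv9} acting on $v_\Delta$, then quotient out the trivial cocycles found in Lemma \ref{lem1}. First I would apply the relations involving only $L$ and $M,Y$ — namely \eqref{sv2}, \eqref{sv3}, \eqref{sv4}, \eqref{sv8}, \eqref{sv9} — to $v_\Delta$. Since $\C c_\gamma$ is annihilated by all of $L_\lambda,M_\lambda,Y_\lambda$ and $\partial$ acts as $\gamma$ on it, and since on $E_1/\C c_\gamma\cong V(\alpha,\Delta)$ the actions of $M_\lambda$ and $Y_\lambda$ are zero, each of these relations collapses to a single polynomial identity in $\lambda,\mu$ (with $\partial$ replaced by $\gamma$ wherever it hits $c_\gamma$). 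For instance \eqref{sv3}, i.e. $L_\lambda M_\mu v_\Delta - M_\mu L_\lambda v_\Delta = -\mu\, M_{\lambda+\mu}v_\Delta$, becomes a functional equation for $g$; similarly \eqref{sv4} relates $h$ to $g$, and \eqref{sv2} constrains $h$ together with $f$ in the Virasoro-like way. The relation \eqref{sv1} applied to $v_\Delta$ is exactly the computation from \cite{CKW1} that underlies Theorem \ref{th2}, so it forces $\alpha+\gamma=0$ and pins $f(\lambda)$ to be either $0$, a multiple of $\lambda^2$ (when $\Delta=1$), or a multiple of $\lambda^3$ (when $\Delta=2$), modulo the trivial cocycle $\alpha+\gamma+\Delta\lambda$.

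Next I would extract the genuinely new phenomenon, the $\Delta=-\tfrac12$ case. The key point is that \eqref{sv2}, the bracket $[L_\lambda,Y_\mu]=(\tfrac12\lambda-\mu)Y_{\lambda+\mu}$, applied to $v_\Delta$ gives
\[
L_\lambda(h(\mu)c_\gamma) - Y_\mu\big((\partial+\alpha+\Delta\lambda)v_\Delta + f(\lambda)c_\gamma\big) = (\tfrac12\lambda-\mu)h(\lambda+\mu)c_\gamma,
\]
and since $L_\lambda c_\gamma = 0$ and $Y_\mu v_\Delta = h(\mu)c_\gamma$ with $Y_\mu c_\gamma=0$, the left side is $-(\gamma+\alpha+\Delta\lambda)h(\mu)c_\gamma$ after we note $Y_\mu$ applied to $(\partial+\alpha+\Delta\lambda)v_\Delta$ produces $(-\mu + \alpha + \Delta\lambda)\cdot$... — here one must be careful that $(\partial+\cdots)v_\Delta$ means $\partial$ acts first, so $Y_\mu$ sees the shifted argument; carrying this out yields, using $\alpha+\gamma=0$, a relation of the shape $(\Delta\lambda)h(\mu) = (\tfrac12\lambda-\mu)h(\lambda+\mu) + (\text{terms in }\mu)$. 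Comparing degrees and the coefficient of $\lambda$ forces $h$ to be a constant $b_0$ and forces $\Delta=-\tfrac12$ precisely (the $\tfrac12\lambda$ from the structure constant must match $\Delta\lambda=-\tfrac12\lambda$ on the constant term). Then \eqref{sv4}, $[Y_\lambda,Y_\mu]v_\Delta=(\lambda-\mu)M_{\lambda+\mu}v_\Delta$, gives $Y_\lambda(h(\mu)c_\gamma)-Y_\mu(h(\lambda)c_\gamma)=0=(\lambda-\mu)g(\lambda+\mu)c_\gamma$, so $g(\lambda)=0$; and \eqref{sv3} together with \eqref{sv1} then forces $f(\lambda)=0$ in this branch (a nonzero $f$ of the $\Delta=1,2$ type is incompatible with $\Delta=-\tfrac12$). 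Finally one checks \eqref{sv8} and \eqref{sv9} are automatically satisfied.

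The remaining steps are bookkeeping: for $\Delta=1,2$ one must confirm that $g=h=0$ is forced (if $h\neq0$ then the argument above forces $\Delta=-\tfrac12$, a contradiction, and with $h=0$ relation \eqref{sv4} gives $g=0$ as well), so these two branches reduce exactly to Theorem \ref{th2}(i),(ii). Conversely, for existence one must exhibit that each of the three cocycles actually defines a module — i.e. verify all of \eqref{sv1}--\eqref{sv9} hold — which is a direct substitution; this is the routine direction. Subtracting the trivial cocycles from Lemma \ref{lem1} (scalar multiples of $\alpha+\gamma+\Delta\lambda$ in the $f$-slot) shows the nontrivial classes are as claimed, and in each of (i)--(iii) the space of nontrivial cocycles modulo coboundaries is spanned by a single parameter ($a_2$, $a_3$, or $b_0$), giving $\dim\mathrm{Ext}(V(\alpha,\Delta),\C c_{-\alpha})=1$. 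The main obstacle is the careful handling of the mixed relation \eqref{sv2}: getting the shifted arguments right when $\partial$, $L_\lambda$ and $Y_\mu$ act in succession, and correctly reading off that the structure constant $\tfrac12$ in $[L_\lambda Y]=(\partial+\tfrac32\lambda)Y$ is what selects the exceptional weight $\Delta=-\tfrac12$. Everything else follows the template of \cite{CKW1}.
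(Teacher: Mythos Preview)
Your proposal is correct and follows essentially the same route as the paper: derive functional equations for $f,g,h$ from \eqref{sv1}, \eqref{sv2}, \eqref{sv4}, conclude $g=0$ immediately, reduce the $f$-analysis to the Virasoro case of Theorem~\ref{th2}, and extract $\Delta=-\tfrac12$ with $h$ constant from the $[L_\lambda,Y_\mu]$ relation.

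One small logical point deserves tightening. You write that relation \eqref{sv1} ``forces $\alpha+\gamma=0$'', and then in the next paragraph you invoke $\alpha+\gamma=0$ when analysing \eqref{sv2}. But \eqref{sv1} alone only forces $f$ to be a coboundary when $\alpha+\gamma\neq0$; it says nothing about $h$. The paper instead sets $\lambda=0$ in the $[L_\lambda,Y_\mu]$ equation to get $(\alpha+\gamma)h(\mu)=0$ directly, so that $\alpha+\gamma\neq0$ also kills $h$, and \emph{then} concludes the whole extension is trivial in that case. Your sketch of the \eqref{sv2} computation is also a bit garbled (the shift is $\partial\mapsto\partial+\mu$, giving $-(\gamma+\mu+\alpha+\Delta\lambda)h(\mu)$ on the left, not $-(\gamma+\alpha+\Delta\lambda)h(\mu)$), though your conclusion---set $\mu=0$ to get $h$ constant, then read off $\Delta=-\tfrac12$---is right and matches the paper exactly.
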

\begin{proof} $``\Leftarrow"$  It follows from Lemma \ref{lem1}.

$``\Rightarrow"$
Suppose that $E_1$ is a nontrivial extension.
Applying both sides of \eqref{sv1}, \eqref{sv2} and \eqref{sv4} to $v_\Delta$ gives
\begin{eqnarray}
&&(\lambda{}-\mu{})f(\lambda+\mu)=(\gamma+\lambda{}+\alpha{}+\Delta{}\mu{})f(\lambda{})-(\gamma+\mu{}+\alpha{}+\Delta{}\lambda{})f(\mu{}),\label{s1}\\
&&(\frac{1}{2}\lambda{}-\mu{})h(\lambda{}+\mu{})=-(\gamma+\mu{}+\alpha{}+\Delta{}\lambda{})h(\mu{}),\label{s2}\\
&&(\lambda{}-\mu{})g(\lambda{}+\mu{})=0.\label{s3}
\end{eqnarray}
Obviously, $g(\lambda)=0$ by \eqref{s3}. Setting $\lambda=0$ in \eqref{s2} gives
\begin{eqnarray}
(\alpha+\gamma)h(\mu)=0. \label{s4}
\end{eqnarray}
If $\alpha+\gamma\neq0$, then $h(\mu)=0$ by \eqref{s4}. Setting $\mu=0$ in \eqref{s1} gives  $f(\lambda)=\frac{f(0)}{\alpha+\gamma}(\alpha+\gamma+\Delta\lambda)$. By Lemma \ref{lem1}, $E_1$ is trivial. A contradiction. Hence $\alpha+\gamma=0.$

Putting $\mu=0$ in \eqref{s2} gives $h(\lambda)=-2\Delta h(0)$. This implies that $h(\lambda)=b_0$ for some $b_0\in\C$. If $b_0\neq0$, then $\Delta =-\frac12$ and it is easy to check that $f(\lambda)=\lambda$ is a unique (up to a scalar) solution to \eqref{s1}.
However, it is a trivial cocycle by Lemma \ref{lem1}. Thus we can assume that $f({\lambda})=0$. By Lemma \ref{lem1} again, the corresponding extension is nontrivial in the case when $b_0\neq0$ and $\Delta =-\frac12$. If $b_0=0$, then it reduces to the Virasoro case and thus the results follow from Theorem \ref{th2}.
\end{proof}

Next we consider extensions of conformal $\mathrm{SV}$-modules of the form \eqref{stype2}.
As a vector space, $E_2$ is isomorphic to $V(\alpha,\Delta)\oplus\C {c_\gamma}$. Here $V(\alpha,\Delta)=\C[\partial]v_\Delta$ is an $\mathrm{SV}$-submodule and the action on $c_\gamma$ is given by
\begin{eqnarray}\label{stype2-1}
L_\lambda{}c_\gamma{}=f(\partial{},\lambda{})v_\Delta{},\ \, M_\lambda{}c_\gamma{}=g(\partial{},\lambda{})v_\Delta{},\ \,
Y_\lambda{}c_\gamma{}=h(\partial{},\lambda{})v_\Delta{},\ \,
\partial{}c_\gamma{}=\gamma{}c_\gamma{}+a(\partial{})v_\Delta,
\end{eqnarray}
where $f(\partial,\lambda),\, g(\partial,\lambda),\, h(\partial{},\lambda{})\in\C[\partial,\lambda]$ and $a(\partial)\in\C[\partial]$.

The following lemma is straightforward.

\begin{lemm}\label{lem2} All trivial extensions of the form \eqref{stype2} are given by \eqref{stype2-1}, where (up to the same scalar) $g(\partial,\lambda)= h(\partial{},\lambda{})=0$,
$f(\partial,\lambda)=(\partial+\alpha+\Delta\lambda)\phi(\partial+\lambda)$ and $a(\partial)= (\partial-\gamma)\phi(\partial)$, with $\phi$ a polynomial.
\end{lemm}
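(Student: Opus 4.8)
The plan is to imitate the computation in Lemma~\ref{lem1}, now for a split sequence of the form \eqref{stype2}. Since the extension is trivial, the sequence splits, so there is an $\mathrm{SV}$-module section, i.e.\ an element $c'=c_\gamma+\phi(\partial)v_\Delta\in E_2$ (for some $\phi\in\C[\partial]$) generating a trivial submodule isomorphic to $\C c_\gamma$; concretely $\partial c'=\gamma c'$ and $L_\lambda c'=M_\lambda c'=Y_\lambda c'=0$. First I would write out $\partial c'$ using \eqref{stype2-1}: $\partial c'=\gamma c_\gamma+a(\partial)v_\Delta+\partial\phi(\partial)v_\Delta$, while $\gamma c'=\gamma c_\gamma+\gamma\phi(\partial)v_\Delta$. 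Equating the $v_\Delta$-components gives $a(\partial)=(\partial-\gamma)\phi(\partial)$, which is the claimed formula for $a$.

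Next I would compute $L_\lambda c'$ from \eqref{stype2-1} and the $\mathrm{SV}$-action on $V(\alpha,\Delta)$ (recall $L_\lambda v_\Delta=(\partial+\alpha+\Delta\lambda)v_\Delta$, $M_\lambda v_\Delta=Y_\lambda v_\Delta=0$ by Proposition~\ref{p1}): one finds
\begin{eqnarray*}
L_\lambda c'=f(\partial,\lambda)v_\Delta+(\partial+\alpha+\Delta\lambda)\phi(\partial+\lambda)v_\Delta,
\end{eqnarray*}
so $L_\lambda c'=0$ forces $f(\partial,\lambda)=-(\partial+\alpha+\Delta\lambda)\phi(\partial+\lambda)$. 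Up to replacing $\phi$ by $-\phi$ (the ``same scalar'' in the statement), this is the asserted formula for $f$. Similarly $M_\lambda c'=g(\partial,\lambda)v_\Delta$ and $Y_\lambda c'=h(\partial,\lambda)v_\Delta$ (since $M_\lambda v_\Delta=Y_\lambda v_\Delta=0$, the terms coming from $\phi(\partial)v_\Delta$ vanish), so $M_\lambda c'=Y_\lambda c'=0$ immediately gives $g(\partial,\lambda)=h(\partial,\lambda)=0$. Conversely, any data of this shape manifestly defines a splitting via $c'=c_\gamma+\phi(\partial)v_\Delta$, so these are exactly the trivial extensions.

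I do not expect a genuine obstacle here: this is the standard ``compute the action on a general lift of the generator'' argument, and the only mild subtlety is the bookkeeping of the common scalar $\phi$ across $f$ and $a$ (and the sign convention), which is why the statement says ``up to the same scalar.'' One should also note in passing that $c'$ is torsion-free as long as $\phi$ is not identically zero's complement — more precisely, for $c'$ to generate a copy of $\C c_\gamma$ we only need $\partial c'=\gamma c'$, which holds for every choice of $\phi$, and the corresponding cocycle is a coboundary; the zero choice $\phi=0$ recovers the direct-sum extension. Hence the lemma follows.
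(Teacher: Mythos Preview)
Your approach is correct and is exactly the computation the paper has in mind (it declares the lemma ``straightforward'' and omits the proof, having already carried out the parallel argument for Lemma~\ref{lem1}). One small slip: from $a(\partial)+\partial\phi(\partial)=\gamma\phi(\partial)$ you actually get $a(\partial)=(\gamma-\partial)\phi(\partial)$, not $(\partial-\gamma)\phi(\partial)$; but as you observe, replacing $\phi$ by $-\phi$ simultaneously corrects both $f$ and $a$ to the stated formulas, so the argument stands.
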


\begin{theo}\label{theo2}
There are nontrivial extensions of $\mathrm{SV}$-modules of the form \eqref{stype2} if and only if $\alpha+\gamma=0$ and $\Delta=1$. In this case, ${\rm dim}_\C\big({\rm Ext}(\C{c_{-\alpha}},V(\alpha,1))\big)=1,$  and the only (up to a scalar) nontrivial extension is given by \eqref{stype2-1},
where $g(\partial,\lambda)=h(\partial,\lambda)=0$ and $f(\partial,\lambda)=a(\partial)=a_0\in \C^*$.
\end{theo}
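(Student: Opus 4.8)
The plan is to mimic the proof of Theorem~\ref{theo1} and reduce everything to the Virasoro classification in Theorem~\ref{th3}; throughout I write the extension \eqref{stype2} in the form \eqref{stype2-1}.

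For the ``if'' direction I would simply exhibit the cocycle: set $g=h=0$, $\alpha+\gamma=0$, $\Delta=1$ and $f(\partial,\lambda)=a(\partial)=a_0\in\C^*$, and check that this gives a well-defined $\mathrm{SV}$-module structure on $E_2$. This is quick: relations \eqref{sv2}--\eqref{sv4}, \eqref{sv8}, \eqref{sv9} hold on $c_\gamma$ for free because $M_\mu c_\gamma=Y_\mu c_\gamma=0$ and $M_\mu v_\Delta=Y_\mu v_\Delta=0$ by Proposition~\ref{p1}; the sesquilinearity relations \eqref{sv6}, \eqref{sv7} on $c_\gamma$ collapse to $(\partial-\gamma)\cdot 0=0$; relation \eqref{sv5} on $c_\gamma$ becomes $(\partial-\gamma+\lambda)f(\partial,\lambda)=a(\partial+\lambda)(\partial+\alpha+\lambda)$, which holds since $\gamma=-\alpha$; and \eqref{sv1} on $c_\gamma$ is precisely the Virasoro cocycle identity, valid by Theorem~\ref{th3}. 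That this cocycle is nontrivial is immediate from Lemma~\ref{lem2}: every coboundary has $f(\partial,\lambda)=(\partial+\alpha+\lambda)\phi(\partial+\lambda)$, never a nonzero constant.

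For the ``only if'' direction I would start from a nontrivial extension \eqref{stype2-1} and first kill $g$ and $h$. Applying \eqref{sv7} (i.e. $[\partial,M_\lambda]=-\lambda M_\lambda$) to $c_\gamma$ and using $M_\lambda v_\Delta=0$ gives $(\partial-\gamma+\lambda)g(\partial,\lambda)=0$ in the integral domain $\C[\partial,\lambda]$, hence $g=0$; the same computation with \eqref{sv6} gives $h=0$. Once $M_\mu$ and $Y_\mu$ act by zero on all of $E_2$, the restriction $E_2|_{\Vir}$ is an extension $0\to M(\alpha,\Delta)\to E_2|_{\Vir}\to\C c_\gamma\to 0$ of Virasoro conformal modules (with $\Delta\neq0$ by Proposition~\ref{p1}), carrying the data $f(\partial,\lambda)$, $a(\partial)$; and it is nontrivial, since any $\Vir$-equivariant splitting $c_\gamma\mapsto c_\gamma+\phi(\partial)v_\Delta$ is automatically $\mathrm{SV}$-equivariant ($M_\mu$ and $Y_\mu$ annihilate both $c_\gamma$ and $c_\gamma+\phi(\partial)v_\Delta$), which would contradict nontriviality of $E_2$. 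Theorem~\ref{th3} then forces $\alpha+\gamma=0$, $\Delta=1$, $f(\partial,\lambda)=a(\partial)=a_0\in\C^*$, and the uniqueness up to scalar there yields $\dim_\C\big(\mathrm{Ext}(\C c_{-\alpha},V(\alpha,1))\big)=1$.

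I do not expect any genuine obstacle here: the whole proof is bookkeeping resting on the single observation that the sesquilinearity relations already force $g=h=0$, after which the situation is literally a Virasoro extension and Theorem~\ref{th3} applies verbatim. The only place to be slightly careful is matching trivial (resp.\ nontrivial) $\mathrm{SV}$-extensions with trivial (resp.\ nontrivial) Virasoro ones, i.e.\ checking that $\mathrm{SV}$ produces no new coboundaries beyond those of Lemma~\ref{lem2}; this is exactly the automatic equivariance of the splitting used above.
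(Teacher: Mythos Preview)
Your proof is correct, and the route you take is slightly different from the paper's. Both arguments begin by applying the sesquilinearity relations \eqref{sv6}, \eqref{sv7} to $c_\gamma$ to kill $g$ and $h$. From there, the paper derives the functional equations \eqref{s5}--\eqref{s6} for $f$ and $a$ and solves them directly (via a divisibility argument on $f(\partial+\lambda,0)$), in effect reproving the Virasoro classification inside this proof. You instead observe that once $g=h=0$ the $\mathrm{SV}$-extension is literally a $\Vir$-extension with the same coboundaries (by Lemma~\ref{lem2}), check that triviality over $\mathrm{SV}$ and over $\Vir$ coincide, and invoke Theorem~\ref{th3}. Your approach is cleaner and avoids redoing work already packaged in Theorem~\ref{th3}; the paper's approach is more self-contained and makes the structure of the cocycle space (and the reduction $f(\partial,\lambda)=f(\partial+\lambda,0)$) explicit.
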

\begin{proof} By Lemma \ref{lem2}, we only need to prove the necessity. Suppose that $E_2$ in \eqref{stype2} is a nontrivial extension.
Applying both sides of \eqref{sv1}, \eqref{sv5}-\eqref{sv7} to $c_\gamma$ gives the following functional equations:
\begin{eqnarray}
&&(\partial{}+\alpha{}+\Delta{}\lambda{})f(\partial{}+\lambda{},\mu{})-(\partial{}+\alpha{}+\Delta{}\mu{})f(\partial{}+\mu{},\lambda{})=(\lambda{}-\mu{})f(\partial{},\lambda{}+\mu{}),\label{s5}\\
&&(\partial +\lambda-\gamma)f(\partial{},\lambda{})=(\partial{}+\alpha{}+\Delta{}\lambda{})a(\partial{}+\lambda{}),\label{s6}\\
&&(\partial +\lambda-\gamma)h(\partial{},\lambda{})=0,\label{s7}\\
&&(\partial +\lambda-\gamma)g(\partial{},\lambda{})=0.\label{s8}
\end{eqnarray}
We see from \eqref{s7} and \eqref{s8} that  $h(\partial,\lambda)=g(\partial,\lambda)=0$.
Setting $\mu=0$ in \eqref{s5} gives
\begin{eqnarray}
(\partial{}+\alpha{}+\Delta{}\lambda{})f(\partial{}+\lambda{},0)=(\partial{}+\la+\alpha{})f(\partial{},\lambda{}).\label{s9}
\end{eqnarray}
If $(\partial{}+\la+\alpha{})|f(\partial{}+\lambda{},0)$, then set $\phi(\partial+\lambda)= \frac{f(\partial{}+\lambda{},0)}{\partial{}+\la+\alpha{}}$. Hence $f(\partial,\lambda)=(\partial+\alpha+\Delta\lambda)\phi(\partial+\lambda)$ by \eqref{s9}. Plugging this into \eqref{s6}, we obtain $a(\partial{})=(\partial-\gamma)\phi(\partial)$. By Lemma \ref{lem2}, the corresponding extension is trivial. A contradiction.
Therefore $(\partial{}+\la+\alpha{})\not|f(\partial{}+\lambda{},0)$. By \eqref{s9} again, we have $(\partial{}+\la+\alpha{})|(\partial{}+\la+\Delta\alpha)$. It follows that $\Delta=1$ and thus $f(\partial,\la)=f(\partial{}+\lambda{},0)$. Substituting this into \eqref{s6}, we obtain $(\partial{}+\alpha{}+\lambda{})| (\partial +\lambda-\gamma)$. Hence $\alpha=-\gamma$ and $a(\partial)=f(\partial,0)$.

 Suppose that ${\rm deg}(f)\geqslant 1$. In this case, there exist some ${\tilde f}(\pa,\la)\in\C[\pa,\la]$ and $a_0\in\C$ such that $f(\partial,\la)=(\pa+\a+\la){\tilde f}(\pa,\la)+a_0$. Since the polynomial $(\pa+\a+\la){\tilde f}(\pa,\la)$ is divisible by $\pa+\a+\la$, it is a trivial cocycle by the discussions above. Therefore we may always assume that $f(\partial,\la)=a_0$. Then
 $a(\partial)=a_0$. The corresponding extension is nontrivial unless $a_0=0$.
\end{proof}

Finally, we study extensions of conformal $\mathrm{SV}$-modules of the form \eqref{stype3}.
As a $\C[\partial]$-module, $E_3$ in \eqref{stype3} is isomorphic to $V(\bar\alpha,\bar\Delta)\bigoplus V(\alpha,\Delta)$, where $V(\bar\alpha,\bar\Delta)=\C[\partial]v_{\bar\Delta}$ is an  $\mathrm{SV}$-submodule and $V(\alpha,\Delta)=\C[\partial]v_{\Delta}$ with the action of $\mathrm{SV}$ on the vector  $v_\Delta$ given by
\begin{eqnarray}\label{3m*}
L_\lambda v_\Delta=(\partial+\alpha+\Delta\lambda)v_\Delta+f(\partial,\lambda)v_{\bar\Delta },\ \
M_\lambda v_\Delta=g(\partial ,\lambda )v_{\bar\Delta},\ \
Y_\lambda v_\Delta=h(\partial,\lambda)v_{\bar\Delta},
\end{eqnarray}
where $f(\partial,\lambda)=\sum_{n\geqslant0}f_n(\partial)\lambda^n,\ g(\partial,\lambda)=\sum_{n\geqslant0}g_n(\partial)\lambda^n, \ h(\partial,\lambda)=\sum_{n\geqslant0}h_n(\partial)\lambda^n,f_n(\partial),g_n(\partial),h_n(\partial)\in\C[\partial]$ and $f_n(\partial)=g_n(\partial)=h_n(\partial)=0$ for $n\gg0$.

\begin{lemm}\label{lem4} All trivial extensions of the form \eqref{stype3} are given by \eqref{3m*}, where
$f(\partial,\lambda)$ is a scalar multiple of $(\partial+\alpha+\Delta\lambda)\phi(\partial)-(\partial+\bar\alpha+\bar\Delta\lambda)\phi(\partial+\lambda)$, $g(\partial,\lambda)= h(\partial ,\lambda)=0$, and where $\phi$ is a polynomial.
\end{lemm}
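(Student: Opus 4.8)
The plan is to use the standard description of trivial extensions: \eqref{stype3} represents a trivial cocycle precisely when the exact sequence splits over $\mathrm{SV}$, i.e.\ when $E_3$ contains an $\mathrm{SV}$-submodule mapped isomorphically onto $V(\alpha,\Delta)$ by $p$ and complementary to $V(\bar\alpha,\bar\Delta)=\C[\partial]v_{\bar\Delta}$. Such a complement is free of rank one over $\C[\partial]$ (Lemma \ref{I}), and its generator projects onto a $\C[\partial]$-generator of $V(\alpha,\Delta)=\C[\partial]v_\Delta$; since $\C[\partial]^{\times}=\C^{*}$, after rescaling we may take the generator to be $v':=v_\Delta+\phi(\partial)v_{\bar\Delta}$ for a uniquely determined $\phi\in\C[\partial]$. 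Hence a trivial extension is exactly one for which some $v'$ of this shape satisfies the defining relations of $V(\alpha,\Delta)$, namely $L_\lambda v'=(\partial+\alpha+\Delta\lambda)v'$ and $M_\lambda v'=Y_\lambda v'=0$.

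The next step is to compute these three actions on $v'$ from \eqref{3m*}, using the identity $a_\lambda(\phi(\partial)w)=\phi(\partial+\lambda)(a_\lambda w)$ (a consequence of Definition \ref{def1}) together with the relations $L_\lambda v_{\bar\Delta}=(\partial+\bar\alpha+\bar\Delta\lambda)v_{\bar\Delta}$ and $M_\lambda v_{\bar\Delta}=Y_\lambda v_{\bar\Delta}=0$ that hold inside the submodule $V(\bar\alpha,\bar\Delta)$. This gives
\[
L_\lambda v'=(\partial+\alpha+\Delta\lambda)v_\Delta+\big(f(\partial,\lambda)+(\partial+\bar\alpha+\bar\Delta\lambda)\phi(\partial+\lambda)\big)v_{\bar\Delta},
\]
together with $M_\lambda v'=g(\partial,\lambda)v_{\bar\Delta}$ and $Y_\lambda v'=h(\partial,\lambda)v_{\bar\Delta}$. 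Comparing the coefficient of $v_{\bar\Delta}$ in $L_\lambda v'=(\partial+\alpha+\Delta\lambda)v'$ forces $f(\partial,\lambda)=(\partial+\alpha+\Delta\lambda)\phi(\partial)-(\partial+\bar\alpha+\bar\Delta\lambda)\phi(\partial+\lambda)$, while $M_\lambda v'=0$ and $Y_\lambda v'=0$ force $g(\partial,\lambda)=h(\partial,\lambda)=0$.

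For the converse, if $f,g,h$ have the stated form then the same computation, run in reverse, shows that $v':=v_\Delta+\phi(\partial)v_{\bar\Delta}$ obeys the relations of $V(\alpha,\Delta)$, so $v_\Delta\mapsto v'$ is a well-defined splitting $\mathrm{SV}$-homomorphism $V(\alpha,\Delta)\to E_3$ (here $E_3=\C[\partial]v'\oplus\C[\partial]v_{\bar\Delta}$ as $\C[\partial]$-modules, with the second summand a submodule), and the extension is trivial; the word ``scalar multiple'' in the statement is redundant, being absorbed into the freedom of choosing $\phi$. The whole argument is routine linear algebra over $\C[\partial]$, so there is no substantial obstacle; the only step warranting explicit care is the initial normalization of a complementary generator to the form $v_\Delta+\phi(\partial)v_{\bar\Delta}$, which relies on freeness of $V(\alpha,\Delta)$ over $\C[\partial]$ (Lemma \ref{I}) and on $\C[\partial]^{\times}=\C^{*}$.
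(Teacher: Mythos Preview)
Your proof is correct and follows essentially the same approach as the paper. The paper does not give an explicit proof of this lemma, treating it as routine in the same vein as its proof of Lemma~\ref{lem1}; your argument is precisely the natural adaptation of that proof to the setting of~\eqref{stype3}, and your normalization of the splitting generator to $v'=v_\Delta+\phi(\partial)v_{\bar\Delta}$ is in fact slightly cleaner than the paper's handling of the analogous step in Lemma~\ref{lem1}. The citation of Lemma~\ref{I} is unnecessary (the complement is isomorphic to $V(\alpha,\Delta)=\C[\partial]v_\Delta$, which is free by definition), but harmless.
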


Applying both sides of \eqref{sv1}, \eqref{sv2} and \eqref{sv4} to $v_\Delta$ gives the following functional equations:
\begin{eqnarray}
(\lambda-\mu)f(\partial,\lambda+\mu)&=&(\partial+\lambda+\Delta\mu+\alpha)f(\partial,\lambda)+(\partial+\bar\Delta\lambda+\bar\alpha)f(\partial+\lambda,\mu)\nonumber \\
&&-(\partial+\mu+\Delta\lambda+\alpha)f(\partial,\mu)-(\partial+\bar\Delta\mu+\bar\alpha)f(\partial+\mu,\lambda),\label{svl}\\
(\frac12\lambda-\mu)h(\partial,\lambda+\mu)&=&(\partial+\bar\Delta\lambda+\bar\alpha)h(\partial+\lambda,\mu)-(\partial+\mu+\Delta\lambda+\alpha)h(\partial,\mu),\label{svly}\\
(\lambda-\mu)g(\partial,\lambda+\mu)&=&0.\label{svy}
\end{eqnarray}
We see from \eqref{svy} that $g(\partial,\lambda)=0$. The next task is to determine solutions to \eqref{svl} and \eqref{svly}, and to find nontrivial cocycles corresponding to nontrivial extensions of the form \eqref{stype3}.

 If $\alpha\neq\bar\alpha$, then \eqref{svl} and \eqref{svly} with $\lambda=0$ imply that
$f(\partial,\mu)=\frac1{\alpha-\bar\alpha}\big((\partial+\Delta\mu+\alpha)f(\partial,0)-(\partial+\bar\Delta\mu+\bar\alpha)f(\partial+\mu,0)\big)$
and $h(\partial,\mu)=0$. By Lemma \ref{lem4}, the corresponding extension is trivial.

Thus we may assume from now on that $\alpha=\bar\alpha$ for our problem. As in \cite{CKW1}, we put $\bar\partial=\partial+\alpha$ and let $\bar f(\bar\partial,\lambda)=f(\bar\partial-\alpha,\lambda)$, $\bar h(\bar\partial, \lambda)=h(\bar\partial-\alpha,\lambda)$.
 For convenience, we will continue to write $\partial$ for $\bar\partial$, $f$ for $\bar f$, and $h$ for $\bar h$. But keep in mind in the case when $\alpha\neq 0$, we need to perform a shift by $\alpha$ in order to obtain the correct solution. Now we can rewrite \eqref{svl} and \eqref{svly} as follows:
\begin{eqnarray}
(\lambda-\mu)f(\partial,\lambda+\mu)&=&(\partial+\lambda+\Delta\mu)f(\partial,\lambda)+(\partial+\bar\Delta\lambda)f(\partial+\lambda,\mu)\nonumber \\
&&-(\partial+\mu+\Delta\lambda)f(\partial,\mu)-(\partial+\bar\Delta\mu)f(\partial+\mu,\lambda),\label{svl1}\\
(\frac12\lambda-\mu)h(\partial,\lambda+\mu)&=&(\partial+\bar\Delta\lambda)h(\partial+\lambda,\mu)-(\partial+\mu+\Delta\lambda)h(\partial,\mu).\label{svlm1}
\end{eqnarray}
Hence we may assume that $f(\pa,\la)$ and $h(\pa,\la)$ are homogeneous in $\pa$ and $\la$.

Setting $\mu=0$ in \eqref{svlm1} gives
\begin{eqnarray}\label{15-1}
\frac12\lambda h(\partial,\lambda)=(\partial+\bar\Delta\lambda)h(\partial+\lambda,0)-(\partial+\Delta\lambda)h(\partial,0).
\end{eqnarray}
Write $h(\partial,\lambda)=\sum^{m}_{i=0}a_i\partial^{m-i}\lambda^i$. Substituting this into \eqref{15-1}, we have
\begin{eqnarray}\label{coeff}
\sum^{m}_{i=0}\big(\frac{a_i}2\big)\partial^{m-i}\lambda^{i+1}=a_0(\partial+\bar\Delta\lambda)(\partial+\lambda)^m-a_0(\partial+\Delta\lambda)\partial^m.
\end{eqnarray}
If $a_0=0$, then we see from \eqref{coeff} that $a_i=0$ for $i=0, 1,\cdots,m$, and thus $h(\partial,\lambda)=0$. In this case, our problem reduces to the Virasoro case. It has been solved in \cite{CKW1}, and the corresponding results are listed in Theorem \ref{th4}.

In the following we assume that $a_0\neq0$, namely, $h(\partial,\lambda)\neq0$. Comparing the coefficients of $\partial^{m}\lambda$ in \eqref{coeff}, we obtain  $\Delta-\bar\Delta=m-\frac12\in\Z+\frac12$. However, it was shown in \cite{CKW1} that $\Delta-\bar\Delta\in\Z$ if $f(\partial,\lambda)$ is a nonzero solution to \eqref{svl1}. In other words, we always have $f(\partial,\lambda)=0$ in the case when $h(\partial,\lambda)\neq0$. Thus the rest of our task is to solve \eqref{svlm1}.

Equating the coefficients of $\partial^{m-i}\lambda^{i+1}$ in both sides of \eqref{coeff}, we have
\begin{eqnarray}\label{coe}
\frac12a_i=a_0\binom{m}{i+1}+a_0\bar\Delta\binom{m}{i},\ \ 1\leqslant i \leqslant m.
\end{eqnarray}
Plugging $h(\partial,\lambda)=\sum^{m}_{i=0}a_i\partial^{m-i}\lambda^i$ into \eqref{svlm1} gives
\begin{eqnarray}\label{coeff1}
(\frac12\lambda-\mu)\sum^{m}_{i=0}a_i\partial^{m-i}(\lambda+\mu)^i=(\partial+\bar\Delta\lambda)\sum^{m}_{i=0}a_i(\partial+\lambda)^{m-i}\mu^i-(\partial+\mu+\Delta\lambda)\sum^{m}_{i=0}a_i\partial^{m-i}\mu^i.
\end{eqnarray}
Assume that $m\geqslant3$. Comparing the coefficients of $\lambda^m\mu$ in \eqref{coeff1} gives
$(\frac m2-1)a_m=\bar\Delta a_1$ and hence by \eqref{coe}, we obtain
$(\frac m2-1)\bar\Delta=\big(\mbox{$\binom{m}{2}$}+m\bar\Delta\big)\bar\Delta.$
Since $\bar\Delta\neq0$, we have
\begin{eqnarray}\label{coe1}
\frac m2-1=\binom{m}{2}+m\bar\Delta.
\end{eqnarray}
Furthermore, comparing the coefficients of $\partial\lambda^{m-1}\mu$ in \eqref{coeff1} gives
\begin{eqnarray}
\big(\frac {m-1}2-1\big)a_{m-1}=\big(1+(m-1)\bar\Delta\big)a_1.
\end{eqnarray}
Using \eqref{coe} again, we have
\begin{eqnarray}\label{coe2}
\Big(\frac {m-1}2-1\Big)(1+m\bar\Delta)=\big(1+(m-1)\bar\Delta\big)\Big(\binom{m}{2}+m\bar\Delta\Big).
\end{eqnarray}
Combining \eqref{coe1} with \eqref{coe2} implies $\bar\Delta=-\frac12.$ Substituting this back into \eqref{coe1}, we get $m=1$ or $2$. This contradicts the assumption that $m\geqslant 3$.

Therefore, there are only three possible cases for $m$, namely, $ m=0,1,2 $.

Suppose that $m=0$. Then $\Delta-\bar\Delta=-\frac12$ and $h(\partial,\lambda)=$constant is a solution to \eqref{svlm1}.

Next suppose that $m=1$. Then $\Delta-\bar\Delta=\frac12$. It is easily checked that $h(\partial,\lambda)=\partial+2\bar\Delta\lambda$ is a unique, up to a constant, solution to \eqref{svlm1}.

Finally consider the case $m=2$. By \eqref{coe}, we have $a_1=2a_0(1+2\bar\Delta)$ and $a_2=2a_0\bar\Delta$. Thus we may assume that
$h(\partial,\lambda)=\partial^2+(2+4\bar\Delta)\pa\la+2\bar\Delta\lambda^2$.
Plugging this back into \eqref{svlm1} we obtain
\begin{eqnarray*}
&&(\frac12\lambda-\mu)\big(\partial^2+(2+4\bar\Delta)\pa(\la+\mu)+2\bar\Delta(\lambda+\mu)^2\big)\\
&&\ \ \ \ \ \ \ \ \, =(\partial+\bar\Delta\lambda)\big((\partial+\la)^2+(2+4\bar\Delta)(\pa+\la)\mu+2\bar\Delta\mu^2\big)\\
&&\ \ \ \ \ \ \ \ \ \ \ \ \ \,-(\partial+\mu+\Delta\lambda)\big(\partial^2+(2+4\bar\Delta)\pa\mu+2\bar\Delta\mu^2\big).
\end{eqnarray*}
Combined with the fact that $\Delta-\bar\Delta=\frac32$, we can simplify the above equation to $(2+4\bar\Delta)\bar\Delta \la^2\mu=0$. This implies that $\bar\Delta=-\frac12$ since $\bar\Delta\neq 0$. Then it follows that $\Delta=1.$  Therefore, $h(\partial,\lambda)=\partial^2+(2+4\bar\Delta)\pa\la+2\bar\Delta\lambda^2$ is a unique (up to a constant) solution to \eqref{svlm1} in the case when $\Delta=1$ and $\bar\Delta=-\frac12$.

We have proved the following.

\begin{lemm}\label{lemm5} Let $h(\partial,\lambda)=\sum^{m}_{i=0}a_i\partial^{m-i}\lambda^i$ be a
 nontrivial solution to \eqref{svlm1}. Then $a_0\neq 0$, $m=0, 1$ or $2$, and the following is a complete list of values of $\Delta$ and $\bar\Delta$ along with the corresponding polynomials $h(\partial,\lambda)$:

\begin{itemize}
\item[{\rm (i)}] $\Delta-\bar\Delta=-\frac12$, $h(\partial,\lambda)=a_0$;
\item [{\rm (ii)}] $\Delta-\bar\Delta=\frac12$, $h(\partial,\lambda)=a_0(\partial+2\bar\Delta\lambda)$;
\item [{\rm (iii)}] $(\Delta,\ \bar\Delta)=(1,-\frac12)$, $h(\partial,\lambda)=a_0(\partial^2-\lambda^2).$
\end{itemize}
\end{lemm}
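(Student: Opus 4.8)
The plan is to analyse the functional equation \eqref{svlm1} directly, using that it preserves total degree in $(\partial,\lambda,\mu)$, so that one may work (as in the statement) with a homogeneous ansatz $h(\partial,\lambda)=\sum_{i=0}^m a_i\partial^{m-i}\lambda^i$. First I would set $\mu=0$ in \eqref{svlm1}, which collapses it to the one-variable identity \eqref{15-1}; expanding $h$ then turns \eqref{15-1} into the polynomial identity \eqref{coeff}. If $a_0=0$, the right-hand side of \eqref{coeff} vanishes, forcing all $a_i=0$ and hence $h=0$; so a nonzero $h$ must have $a_0\neq0$. Comparing the coefficient of $\partial^m\lambda$ on the two sides of \eqref{coeff} yields the degree constraint $\Delta-\bar\Delta=m-\frac12$. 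At this point I would invoke the Virasoro fact from \cite{CKW1} underlying Theorem \ref{th4}, namely that a nonzero solution of the companion equation \eqref{svl1} forces $\Delta-\bar\Delta\in\Z$; since $m-\frac12\notin\Z$, the accompanying cocycle $f$ must vanish whenever $h\neq0$, so the problem reduces cleanly to classifying the nonzero solutions of \eqref{svlm1} alone.

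The next step is to read off from \eqref{coeff} the recursion \eqref{coe}, which expresses each $a_i$ with $1\le i\le m$ in terms of $a_0$ and $\bar\Delta$; thus, once $m$ and $\bar\Delta$ are fixed, $h$ is determined up to the scalar $a_0$. The crux is then to rule out $m\ge 3$. For this I would substitute the full polynomial $h$ back into \eqref{svlm1}, equivalently into \eqref{coeff1}, and compare the coefficients of $\lambda^m\mu$ and of $\partial\lambda^{m-1}\mu$. Using \eqref{coe} to eliminate $a_1$, $a_{m-1}$, $a_m$, and dividing out by $a_0\neq0$ and $\bar\Delta\neq0$, these two comparisons become the polynomial relations \eqref{coe1} and \eqref{coe2} in $m$ and $\bar\Delta$. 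Using \eqref{coe1} to simplify \eqref{coe2} forces $\bar\Delta=-\frac12$, and feeding this back into \eqref{coe1} forces $m\in\{1,2\}$, contradicting $m\ge3$. I expect this to be the main obstacle: the computation is entirely elementary, but it requires careful bookkeeping of which monomials pick up contributions from the prefactors $(\partial+\bar\Delta\lambda)$ and $(\partial+\mu+\Delta\lambda)$ and of the relevant binomial coefficients, and a slip there would break the contradiction.

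It then remains to treat $m=0,1,2$ one at a time. For $m=0$, $h$ is the constant $a_0$, and with $\Delta-\bar\Delta=-\frac12$ a one-line substitution verifies \eqref{svlm1}. For $m=1$, \eqref{coe} gives $h(\partial,\lambda)=a_0(\partial+2\bar\Delta\lambda)$ with $\Delta-\bar\Delta=\frac12$, which a short computation confirms solves \eqref{svlm1} identically. For $m=2$, \eqref{coe} yields $h(\partial,\lambda)=a_0(\partial^2+(2+4\bar\Delta)\partial\lambda+2\bar\Delta\lambda^2)$ with $\Delta-\bar\Delta=\frac32$; plugging this into \eqref{svlm1} and simplifying leaves exactly $(2+4\bar\Delta)\bar\Delta\,\lambda^2\mu=0$, so $\bar\Delta=-\frac12$ since $\bar\Delta\neq0$, whence $\Delta=1$ and $h(\partial,\lambda)=a_0(\partial^2-\lambda^2)$. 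These three cases are precisely items (i)--(iii), and the uniqueness up to a scalar asserted in each is already encoded in the recursion \eqref{coe}. For completeness, the excluded case $h=0$ (equivalently $a_0=0$) returns the extension problem to the purely Virasoro equation \eqref{svl1}, which was solved in \cite{CKW1} and is recorded in Theorem \ref{th4}.
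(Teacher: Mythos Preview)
Your proposal is correct and follows essentially the same route as the paper: set $\mu=0$ to obtain \eqref{15-1} and \eqref{coeff}, deduce $a_0\neq0$ and $\Delta-\bar\Delta=m-\tfrac12$, extract the recursion \eqref{coe}, then compare the coefficients of $\lambda^m\mu$ and $\partial\lambda^{m-1}\mu$ in \eqref{coeff1} to derive \eqref{coe1}--\eqref{coe2} and rule out $m\ge3$, and finally check $m=0,1,2$ directly. The structure, the key monomials chosen, and the finishing computations all match the paper's argument.
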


By Theorem \ref{th4}, Lemma \ref{lemm5} and the discussions above, we obtain the following results.
\begin{theo}
Nontrivial extensions of conformal $\mathrm{SV}$-modules of the form \eqref{stype3} exist only if $\alpha=\bar\alpha$ and $\Delta-\bar\Delta=0,2,3,4,5,6, \pm\frac12, \frac32.$ The following is a complete list of values of $\Delta$ and $\bar\Delta$, along with the triple of polynomials $f(\partial,\lambda)$, $g(\partial,\lambda)$ and $h(\partial,\lambda)$, whose nonzero scalar multiples give rise to nontrivial extensions ($\bar\partial=\partial+\alpha$):
\begin{itemize}
\item[{\rm (i)}] A nontrivial extension of $\Vir$-modules of Theorem \ref{th4} with trivial actions of $M$ and $Y$.
\item[{\rm (ii)}] $\Delta-\bar\Delta=-\frac12$, $f(\partial,\lambda)=g(\partial,\lambda)=0$ and
$h(\partial,\lambda)=a_0,$ with $a_0\in\C^*$.
\item [{\rm (iii)}]$\Delta-\bar\Delta=\frac12$, $f(\partial,\lambda)=g(\partial,\lambda)=0$
 and $h(\partial,\lambda)=a_0(\bar\partial+2\bar\Delta\lambda),$ with $a_0\in\C^*$.
\item [{\rm (iv)}] $(\Delta,\ \bar\Delta)=(1,-\frac12)$, $f(\partial,\lambda)=g(\partial,\lambda)=0$ and  $h(\partial,\lambda)=a_0(\bar\partial^2-\lambda^2),$ with $a_0\in\C^*$.
\end{itemize}
\end{theo}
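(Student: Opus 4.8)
The plan is to assemble the classification of nontrivial extensions of the form \eqref{stype3} from three ingredients already in place: (a) the reduction $\alpha=\bar\alpha$ and the normalization $\bar\partial=\partial+\alpha$ that lets us treat $f$ and $h$ as homogeneous polynomials; (b) the dichotomy observed above, namely that $g(\partial,\lambda)=0$ always, and that in any nontrivial cocycle we cannot simultaneously have $f\neq0$ and $h\neq0$ (since $f\neq0$ forces $\Delta-\bar\Delta\in\Z$ by \cite{CKW1}, while $h\neq0$ forces $\Delta-\bar\Delta\in\Z+\tfrac12$); and (c) the two endpoint computations: when $h=0$ the problem is exactly the Virasoro extension problem, whose answer is Theorem \ref{th4}, and when $h\neq0$ Lemma \ref{lemm5} lists the only three $(\Delta,\bar\Delta)$-configurations together with the corresponding $h$.

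First I would record the case $h(\partial,\lambda)=0$. Here \eqref{svl1} is precisely the Virasoro functional equation (4.14)/(4.15)-type equation from \cite{CKW1}, so a nonzero solution $f$ exists exactly for the values of $\Delta-\bar\Delta$ in $\{0,2,3,4,5,6\}$ listed in Theorem \ref{th4}, with the polynomials $f(\partial,\lambda)$ given there (after the shift $\partial\mapsto\bar\partial$); the actions of $M$ and $Y$ are trivial because $g=h=0$. This yields item (i) of the statement. One must also check, via Lemma \ref{lem4}, that these $f$ are not coboundaries — but this is exactly what is verified in \cite{CKW1}, so nothing new is needed.

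Next I would treat $h(\partial,\lambda)\neq0$, hence $f(\partial,\lambda)=0$ by the dichotomy. By Lemma \ref{lemm5}, the only possibilities are $m=0,1,2$, giving respectively $\Delta-\bar\Delta=-\tfrac12$ with $h=a_0$; $\Delta-\bar\Delta=\tfrac12$ with $h=a_0(\partial+2\bar\Delta\lambda)$; and $(\Delta,\bar\Delta)=(1,-\tfrac12)$ with $h=a_0(\partial^2-\lambda^2)$. Undoing the normalization (replacing $\partial$ by $\bar\partial=\partial+\alpha$) produces the polynomials in items (ii)--(iv). It remains to confirm that each such cocycle is genuinely nontrivial: since $g=h=0$ characterizes trivial extensions by Lemma \ref{lem4}, and here $h\neq0$, none of these cocycles is a coboundary; moreover distinct scalar multiples give inequivalent extensions, so $\dim_{\C}{\rm Ext}=1$ in each of these three cases. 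Assembling (i)--(iv) and noting that the listed values of $\Delta-\bar\Delta$ are exactly $\{0,2,3,4,5,6\}\cup\{\pm\tfrac12,\tfrac32\}$ completes the proof.

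The main obstacle is not in this final assembly — which is essentially bookkeeping over the two regimes $h=0$ and $h\neq0$ — but in the input Lemma \ref{lemm5}, i.e.\ solving the functional equation \eqref{svlm1} for $h$: one must extract from the coefficient comparison in \eqref{coeff1} both the constraint \eqref{coe} on the coefficients $a_i$ and, for $m\geqslant3$, the contradiction obtained by combining \eqref{coe1} and \eqref{coe2} to force $\bar\Delta=-\tfrac12$ and then $m\in\{1,2\}$. The one subtlety I would flag explicitly in the write-up is the interaction between the $f$-equation and the $h$-equation: the claim that $\Delta-\bar\Delta\in\Z$ whenever $f\neq0$ is imported from \cite{CKW1} and is what cleanly decouples the two half-integer-shift families from the integer-shift families, so that no "mixed" extension with both $f\neq0$ and $h\neq0$ can occur. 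Once that point is made, the rest follows from Theorem \ref{th4} and Lemmas \ref{lem4}, \ref{lemm5} with no further computation.
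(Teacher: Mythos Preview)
Your proposal is correct and follows essentially the same route as the paper: the proof there is literally the one-line ``By Theorem \ref{th4}, Lemma \ref{lemm5} and the discussions above,'' where ``the discussions above'' are exactly your ingredients (a)--(c), namely the reduction $\alpha=\bar\alpha$, the forced vanishing $g=0$, and the integer/half-integer dichotomy for $\Delta-\bar\Delta$ that separates the $h=0$ (Virasoro) regime from the $h\neq0$ regime. Your explicit remark that the imported fact ``$f\neq0$ nontrivially $\Rightarrow \Delta-\bar\Delta\in\Z$'' from \cite{CKW1} is what decouples the two families is a good clarification; the paper states this a bit tersely (``we always have $f(\partial,\lambda)=0$ in the case when $h(\partial,\lambda)\neq0$''), and it should be read as ``$f$ is a coboundary, hence may be taken to be $0$,'' which is precisely how you phrase it.
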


\section{Classification of nontrivial irreducible $\widetilde
{\rm SV}$-modules}

This section is devoted to classification of all finite nontrivial irreducible conformal modules over the extended Schr\"odinger-Virasoro conformal algebra $\widetilde
{\rm SV}$.

First let us describe the rank one case, which was given in \cite{SY1}.

\begin{prop}\label{key1}

\begin{itemize}
\item[{\rm (1)}]
All nontrivial conformal $\widetilde {\rm SV}$-modules of rank 1
are the following ($\Delta,\a,\beta\in\C$):
\begin{eqnarray}\label{key-key}
V(\alpha,\beta,\Delta)=\mathbb{C}[\partial]v_\Delta,\ L_\lambda
v_\Delta=(\partial+\alpha+\Delta \lambda)v_\Delta, \ N_\lambda v_\Delta=\beta v_\Delta,\
M_\lambda v_\Delta=Y_\lambda v_\Delta=0.
\end{eqnarray}
\item[{\rm (2)}]
The module
$V(\alpha,\beta,\Delta)$ is irreducible if and only if $\Delta\neq 0$ or $\b\neq 0$.
\end{itemize}
\end{prop}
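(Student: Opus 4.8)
The plan is to prove Proposition \ref{key1} by the same strategy that yielded Proposition \ref{p1} for $\mathrm{SV}$, adapting it to accommodate the extra generator $N$. First I would observe that any nontrivial rank-one conformal $\widetilde{\mathrm{SV}}$-module is, in particular, a nontrivial rank-one conformal module over the Virasoro subalgebra $\Vir=\C[\pa]L$, hence by the classification recalled from \cite{CK} it must be $M(\alpha,\Delta)=\C[\pa]v_\Delta$ with $L_\lambda v_\Delta=(\pa+\alpha+\Delta\lambda)v_\Delta$ for some $\alpha,\Delta\in\C$. It then remains to determine the actions of $M$, $Y$, $N$ on the generator $v_\Delta$; write $M_\lambda v_\Delta=m(\pa,\lambda)v_\Delta$, $Y_\lambda v_\Delta=y(\pa,\lambda)v_\Delta$, $N_\lambda v_\Delta=n(\pa,\lambda)v_\Delta$ for suitable polynomials, and impose the defining $\lambda$-bracket relations of $\widetilde{\mathrm{SV}}$ obtained from \eqref{lamda-bracket}, \eqref{lamda-bracket1} via the module axioms in Definition \ref{def1}.

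The key steps, in order: (1) From $[L_\lambda M]=(\pa+\lambda)M$ one gets a functional equation for $m(\pa,\lambda)$ of the same shape as \eqref{sv3}; combined with $[M_\lambda M]=0$ (which gives $(\lambda-\mu)$-type vanishing) and with $[N_\lambda M]=2M$, one forces $m(\pa,\lambda)=0$, exactly as in the $\mathrm{SV}$ analysis where $g(\lambda)=0$. (2) Similarly $[L_\lambda Y]=(\pa+\tfrac32\lambda)Y$, $[Y_\lambda Y]=(\pa+2\lambda)M=0$ (since $M$ acts trivially), and $[N_\lambda Y]=Y$ together force $y(\pa,\lambda)=0$; here the relation $[N_\lambda Y]=Y$ applied to $v_\Delta$ reads $n(\pa+\mu,\lambda)y(\pa,\mu)-(\text{shifted})y(\cdot)=y(\pa,\lambda+\mu)$, and once $y$ is shown to vanish there is no obstruction. (3) Finally $[L_\lambda N]=(\pa+\lambda)N$ gives the functional equation $(\pa+\alpha+\Delta\lambda)n(\pa+\lambda,\mu)-(\pa+\alpha+\Delta\mu)n(\pa+\mu,\lambda)=-\mu\, n(\pa,\lambda+\mu)$ (note $N$ behaves like a weight-one primary, cf. \eqref{elie}), and $[N_\lambda N]=0$ is automatic; solving this forces $n(\pa,\lambda)$ to be a constant $\beta\in\C$. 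This yields the family \eqref{key-key}.

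For part (2), the argument is the standard one: if $\Delta=0$ and $\beta=0$ then $L_\lambda v_\Delta=(\pa+\alpha)v_\Delta$, so $(\pa+\alpha)v_\Delta$ generates a proper nonzero submodule (by the $\Vir$ case $M(\alpha,0)\supset M(\alpha,1)$), and all of $M,Y,N$ act trivially, so the module is reducible. Conversely, if $\Delta\neq0$ or $\beta\neq0$, suppose $U\subseteq V(\alpha,\beta,\Delta)$ is a nonzero submodule; since $V(\alpha,\beta,\Delta)$ is a free $\C[\pa]$-module of rank one, pick $p(\pa)v_\Delta\in U$ of minimal degree. If $\Delta\neq0$, apply $L_\lambda$: $L_\lambda(p(\pa)v_\Delta)=p(\pa+\lambda)(\pa+\alpha+\Delta\lambda)v_\Delta$, and comparing top $\lambda$-degree coefficients produces an element of $U$ of strictly smaller $\pa$-degree unless $p$ is constant, forcing $U=V(\alpha,\beta,\Delta)$. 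If instead $\beta\neq0$ (and $\Delta=0$), apply $N_\lambda$: $N_\lambda(p(\pa)v_\Delta)=\beta\,p(\pa+\lambda)v_\Delta$, and the $\lambda$-dependence again lowers the degree unless $p$ is constant. Either way $V(\alpha,\beta,\Delta)$ is irreducible.

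The main obstacle I anticipate is step (3): showing $n(\pa,\lambda)$ must be constant. The functional equation for $n$ is genuinely of the type appearing in the Virasoro and $\mathrm{SV}$ extension analyses (it is essentially the cocycle equation for a deformation of the weight-one module), so in principle it could admit nonconstant polynomial solutions; one must check carefully that, because we are computing the module structure on a \emph{free rank-one} module rather than an extension, the relevant solution space collapses to the constants. Concretely I would set $\mu=0$ to get $(\pa+\alpha+\Delta\lambda)n(\pa+\lambda,0)=(\pa+\alpha+\lambda)n(\pa,0)$ up to sign bookkeeping, deduce $n(\pa,0)$ is constant, then feed this back to kill all higher $\lambda$-coefficients of $n(\pa,\lambda)$; this is the one place where a short but slightly delicate calculation is unavoidable. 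The $M$- and $Y$-steps are comparatively routine, mirroring \eqref{s3} and \eqref{s2} in the $\mathrm{SV}$ case.
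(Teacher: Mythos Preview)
Your proposal is correct, and for part~(2) it is essentially identical to the paper's argument: the paper also shows reducibility when $\Delta=\beta=0$ via the submodule generated by $(\partial+\alpha)v_\Delta$, and for irreducibility it applies $L_\lambda$ (if $\Delta\neq0$, reducing to the Virasoro case) or $N_\lambda$ (if $\beta\neq0$) to a minimal-degree element $f(\partial)v_\Delta$ and reads off $v_\Delta$ from the top $\lambda$-coefficient.

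For part~(1) the approaches differ: the paper does not prove it at all but simply cites \cite{SY1}, whereas you outline a direct derivation. Your sketch is sound, and the step you flag as the obstacle --- showing $n(\partial,\lambda)$ is constant --- is genuinely the only nontrivial point. It is worth noting that the functional equation you obtain for $n$ from $[L_\lambda,N_\mu]$ is precisely the equation \eqref{esv14-2} that the paper analyzes later in Theorem~\ref{th5-3} (the case $\beta=\bar\beta=0$ of the extension problem), specialized to $\bar\Delta=\Delta$; there the paper shows that homogeneous solutions of degree $n\geqslant1$ force $\Delta-\bar\Delta=n$, so when $\Delta=\bar\Delta$ only constants survive. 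Thus your ``slightly delicate calculation'' can in fact be borrowed verbatim from that later analysis, which gives your route a nice internal coherence with the rest of the paper that the bare citation to \cite{SY1} does not.
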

\begin{proof} (1) It has been proved in \cite{SY1}.

 (2) ``$\Rightarrow$"  Suppose that $\Delta=\b=0$.  In this case, it is easy to see that
 $(\partial+\a)v_\Delta$ generates a proper submodule of $V(\alpha,0,0)$.
Thus  $V(\alpha,0,0)$ is a reducible conformal $\widetilde{\rm SV}$-module. A contradiction. 
Therefore, $(\Delta,\beta)\neq (0, 0)$.

``$\Leftarrow$"  If $\Delta\neq0$, then $V(\alpha,\beta,\Delta)$ is an irreducible conformal Virasoro module.
Thus it is also an irreducible conformal $\widetilde{\rm SV}$-module.

Now consider the case $\b\neq 0$. Suppose that $V$ is a nonzero submodule of $V(\alpha,\beta,\Delta)$. Then
there exists a nonzero element $u=f(\pa)v_\Delta\in V$ for some $0\neq f(\pa)\in\C[\pa]$.
If ${\rm deg}\, f(\pa)=0$, then $v_\Delta\in V$ and thus $V=V(\alpha,\beta,\Delta)$.
If ${\rm deg}\, f(\pa)=m>0$, then $N_{\la}u=f(\pa+\la)(N_{\la}v_\Delta)=\b f(\pa+\la)v_\Delta\in V[\pa,\la]$. We see that
the coefficient of $\la^m$ is a nonzero scalar multiple of $v_\Delta$. This implies that $v_\Delta\in V$ and thus $V=V(\alpha,\beta,\Delta)$.
Therefore, $V(\alpha,\beta,\Delta)$ is irreducible.

This completes the proof.
\end{proof}

\begin{lemm}\label{A}
The annihilation algebra of $\widetilde
{\rm SV}$ is of the form ($m,n\in\Z,$ $p\in \frac12+\Z$)
\begin{eqnarray}
\textit{Lie}(\widetilde
{\rm SV})^+= \sum_{m\geqslant -1}\C L_m+\sum_{n\geqslant 0}\C M_n +\sum_{n\geqslant 0}\C N_n+\sum_{p\geqslant -\frac12}\C Y_{p},
\end{eqnarray}
with the following nonvanishing relations
\begin{eqnarray}\label{LB}\begin{array}{lll}
&&[L_m,L_{n}]=(m-n)L_{m+n},\ \ \
[L_m,M_n]=-nM_{m+n},\ \ \
[L_m,Y_p]=(\frac{m}{2}-p)Y_{m+p}, \\[6pt]
&&[Y_p\,,Y_{q}]=(p-q)M_{p+q},\ \ \ \, [L_m,N_n]=-nN_{m+n}, \ \ \ [N_m,Y_p]=Y_{m+p}, \ \,[N_m,M_n]=2M_{m+n}.\end{array}
\end{eqnarray}
And the extended annihilation algebra $\textit{Lie}(\widetilde
{\rm SV})^e=\C \pa \bigoplus\textit{Lie}(\widetilde
{\rm SV})^+ $, satisfying \eqref{LB} and the following
\begin{equation}\label{LB2}
[\partial, L_m]=-(m+1)L_{m-1}, \
[\partial, M_n]=-nM_{n-1},\ [\partial, N_n]=-nN_{n-1},\ [\partial, Y_p]=-(p+\frac12)Y_{p-1}.
\end{equation}
\end{lemm}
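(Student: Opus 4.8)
The plan is to compute the annihilation algebra $\textit{Lie}(\widetilde{\mathrm{SV}})^+$ directly from the definition $Lie(R) = \widetilde{R}/\widetilde{\partial}\widetilde{R}$ applied to $R = \widetilde{\mathrm{SV}}$, using the $\lambda$-brackets \eqref{lamda-bracket} and \eqref{lamda-bracket1} together with the general formula \eqref{lie} for the bracket on $Lie(R)$. First I would read off the $j$-products of the generators $L, M, Y, N$ from their $\lambda$-brackets via \eqref{jj1}: for instance $[L_\lambda L] = (\partial + 2\lambda)L$ gives $L_{(0)}L = \partial L$, $L_{(1)}L = 2L$, and $L_{(j)}L = 0$ for $j \geq 2$; similarly $[L_\lambda Y] = (\partial + \tfrac32\lambda)Y$ gives $L_{(0)}Y = \partial Y$, $L_{(1)}Y = \tfrac32 Y$; $[L_\lambda M] = (\partial+\lambda)M$ gives $L_{(0)}M = \partial M$, $L_{(1)}M = M$; $[Y_\lambda Y] = (\partial+2\lambda)M$ gives $Y_{(0)}Y = \partial M$, $Y_{(1)}Y = 2M$; and from \eqref{lamda-bracket1}, $[L_\lambda N] = (\partial+\lambda)N$ gives $L_{(0)}N = \partial N$, $L_{(1)}N = N$, while $[N_\lambda M] = 2M$ gives $N_{(0)}M = 2M$ and $[N_\lambda Y] = Y$ gives $N_{(0)}Y = Y$ (all higher $j$-products vanish).

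Next I would fix the indexing convention. Writing $a_{(m)}$ for the image of $a t^m$ and using $(\partial a)_{(n)} = -n a_{(n-1)}$, the generator $L$ has conformal weight $2$, so it is natural to relabel $L_m := L_{(m+1)}$ (the Virasoro convention), giving the range $m \geq -1$; similarly $M$ and $N$ have weight $1$, so $M_n := M_{(n)}$, $N_n := N_{(n)}$ with $n \geq 0$; and $Y$ has weight $\tfrac32$, so $Y_p := Y_{(p+1/2)}$ with $p \in -\tfrac12 + \Z$, $p \geq -\tfrac12$. (The precise shifts are whatever makes \eqref{LB} come out; I would pin them down by requiring $[L_m, L_n] = (m-n)L_{m+n}$.) Then I would substitute the $j$-products into \eqref{lie}, i.e. $[a_{(k)}, b_{(l)}] = \sum_{j\geq 0}\binom{k}{j}(a_{(j)}b)_{(k+l-j)}$, and carry out each of the seven bracket computations: $[L,L]$, $[L,M]$, $[L,Y]$, $[Y,Y]$, $[L,N]$, $[N,Y]$, $[N,M]$. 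For example $[L_{(k)}, L_{(l)}] = \binom{k}{0}(L_{(0)}L)_{(k+l)} + \binom{k}{1}(L_{(1)}L)_{(k+l-1)} = (\partial L)_{(k+l)} + 2k L_{(k+l-1)} = -(k+l)L_{(k+l-1)} + 2k L_{(k+l-1)} = (k-l)L_{(k+l-1)}$, which after the shift $L_m = L_{(m+1)}$ becomes $(m-n)L_{m+n}$. The analogous manipulations with the correct weight shifts yield the remaining relations in \eqref{LB}; all brackets not listed vanish because the corresponding $j$-products are zero, and in particular $[M,M] = [Y,M] = [N,N] = 0$.

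Finally, for the extended annihilation algebra I would recall from the preliminaries that $\textit{Lie}(R)^e = \C\partial \ltimes \textit{Lie}(R)^+$ with $[\partial, a_{(n)}] = -n a_{(n-1)}$, and translate this through the weight shifts: $[\partial, L_m] = [\partial, L_{(m+1)}] = -(m+1)L_{(m)} = -(m+1)L_{m-1}$, $[\partial, M_n] = -n M_{n-1}$, $[\partial, N_n] = -n N_{n-1}$, and $[\partial, Y_p] = [\partial, Y_{(p+1/2)}] = -(p+\tfrac12)Y_{(p-1/2)} = -(p+\tfrac12)Y_{p-1}$, giving \eqref{LB2}. The main obstacle is purely bookkeeping rather than conceptual: one must keep the three different weight shifts ($+1$ for $L$, $0$ for $M$ and $N$, $+\tfrac12$ for $Y$) consistent across all computations and verify that the resulting index ranges ($m \geq -1$, $n \geq 0$, $p \geq -\tfrac12$) are exactly those forced by $\textit{Lie}(R)^+ = \mathrm{span}\{a_{(m)} : m \geq 0\}$ together with the relation $(\partial a)_{(n)} = -n a_{(n-1)}$ (which is why the weight-$2$ generator reaches down to $m = -1$). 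No genuine difficulty arises; the content is a careful, systematic unwinding of \eqref{jj1}, \eqref{lie}, and the definition of the extended annihilation algebra. \QED
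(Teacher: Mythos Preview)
Your proposal is correct and follows essentially the same approach as the paper's own proof: read off the $j$-products from the $\lambda$-brackets via \eqref{jj1}, plug into the bracket formula \eqref{lie}, and relabel by the weight shifts $L_m=L_{(m+1)}$, $M_n=M_{(n)}$, $N_n=N_{(n)}$, $Y_p=Y_{(p+1/2)}$ (exactly the paper's conventions $L_{(m)}=L_{m-1}$, $Y_{(n)}=Y_{n-1/2}$), then obtain \eqref{LB2} from $[\partial,a_{(n)}]=-na_{(n-1)}$. The paper only writes out the $[L,L]$ computation explicitly and asserts the rest ``similarly,'' whereas you spell out more of the bookkeeping, but the method is identical.
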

\begin{proof} By \eqref{lamda-bracket} and \eqref{jj1}, we have  $(\pa+2\la)L=[L_\la L]=\sum_{j\geqslant 0}(L_{(j)}L)\frac{\lambda^j}{j!}.$
This implies that
\begin{eqnarray}
L_{(0)}L=\pa L \, \ \ \ L_{(1)}L=2L, \ \ L_{(i)}L=0, \ \ i\geqslant 2.
\end{eqnarray}
Using this along with \eqref{lie}, we obtain
\begin{eqnarray*}
[L_{(m)}, L_{(n)}]&=&\sum_{j\in\Z^+}
\begin{pmatrix}
m\\j
\end{pmatrix}
(L_{(j)}L)_{(m+n-j)}=(L_{(0)}L)_{(m+n)}+m(L_{(1)}L)_{(m+n-1)}\\
&=&-(m+n)L_{(m+n-1)}+2mL_{(m+n-1)}=(m-n)L_{(m+n-1)}.
\end{eqnarray*}
Set $L_{(m)} = L_{m-1}$ for $m\geqslant 0$. Then the above equation is equivalent to $[L_m, L_n]=(m-n) L_{m+n}$. The other relations in \eqref{LB} can be similarly obtained by setting $M_{(n)}=M_n$, $N_{(n)}=N_n$ and $Y_{(n)}=Y_{n-\frac12}$ for $n\geqslant 0$, respectively.
The relations in \eqref{LB2} follows from the fact that $[\pa,a_{(n)}]=-na_{(n-1)}$ for $a_{(n)}\in\textit{Lie}(\widetilde
{\rm SV})^+$.
\end{proof}

The following result is straightforward.
\begin{lemm}\label{center}
$L_{-1}-\pa$ is in the center of $\textit{Lie}(\widetilde {\rm SV})^e$.
\end{lemm}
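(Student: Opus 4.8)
The plan is to verify directly that $L_{-1}-\pa$ commutes with every generator of $\textit{Lie}(\widetilde{\rm SV})^e$ listed in Lemma \ref{A}, using the structure relations \eqref{LB} and \eqref{LB2}. Since $\textit{Lie}(\widetilde{\rm SV})^e$ is generated by $\pa$ together with the elements $L_m$ $(m\geqslant -1)$, $M_n,N_n$ $(n\geqslant 0)$, and $Y_p$ $(p\geqslant -\tfrac12)$, and since a bracket with a fixed element is a derivation of the Lie algebra, it suffices to check that $[L_{-1}-\pa, x]=0$ for $x$ running over this generating set.

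First I would dispose of $x=\pa$: one has $[L_{-1}-\pa,\pa]=[L_{-1},\pa]=-[\pa,L_{-1}]$, and \eqref{LB2} with $m=-1$ gives $[\pa,L_{-1}]=-(\,-1+1\,)L_{-2}=0$, so this case is immediate. Next, for $x=L_m$ with $m\geqslant -1$, \eqref{LB} gives $[L_{-1},L_m]=(-1-m)L_{m-1}$, while \eqref{LB2} gives $[\pa,L_m]=-(m+1)L_{m-1}=(-1-m)L_{m-1}$; subtracting yields $[L_{-1}-\pa,L_m]=0$. For $x=M_n$: \eqref{LB} gives $[L_{-1},M_n]=-nM_{n-1}$ and \eqref{LB2} gives $[\pa,M_n]=-nM_{n-1}$, so the difference vanishes; the case $x=N_n$ is identical with $M$ replaced by $N$. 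Finally, for $x=Y_p$ with $p\in-\tfrac12+\Z_{\geqslant 0}$: \eqref{LB} gives $[L_{-1},Y_p]=(-\tfrac12-p)Y_{p-1}$ and \eqref{LB2} gives $[\pa,Y_p]=-(p+\tfrac12)Y_{p-1}=(-\tfrac12-p)Y_{p-1}$, again matching. Hence $L_{-1}-\pa$ brackets trivially with all generators, so it is central.

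There is essentially no obstacle here: the statement is "straightforward" precisely because the commutation relations \eqref{LB2} defining the $\pa$-action were engineered (via $[\pa,a_{(n)}]=-na_{(n-1)}$ and the index shifts $L_{(m)}=L_{m-1}$, $Y_{(n)}=Y_{n-\frac12}$) to mirror the inner derivation $\mathrm{ad}\,L_{-1}$ on the annihilation subalgebra. The only point requiring a word of care is bookkeeping with the half-integer indexing of the $Y_p$ and the shifted indexing of the $L_m$, to make sure the two formulas being compared really produce the same coefficient and the same subscript; once the dictionary $L_{(m)}=L_{m-1}$ is kept in mind this is routine. I would therefore present the proof as the short case analysis above, remarking that it reflects the general fact that on $\textit{Lie}(R)^+$ the derivation $\pa$ agrees with $\mathrm{ad}$ of the image of $L$ at the appropriate degree.
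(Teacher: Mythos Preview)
Your proposal is correct and is precisely the routine verification the paper has in mind when it declares the result ``straightforward'' without giving a proof. The only minor bookkeeping point worth noting is that at the boundary indices ($m=-1$ for $L$, $n=0$ for $M,N$, $p=-\tfrac12$ for $Y$) the right-hand sides formally involve elements like $L_{-2}$ or $M_{-1}$ lying outside $\textit{Lie}(\widetilde{\rm SV})^+$, but in each such case the numerical coefficient is zero, so no issue arises.
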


For simplicity, we denote $\LL=\textit{Lie}(\widetilde {\rm SV})^e$.
Set $\mathcal{L}_n=\bigoplus\limits_{i\geqslant n}(\C L_{i}\oplus\C M_i\oplus\C N_i\oplus\C Y_{i+\frac12})$ for $n \geq 0$ and $\mathcal{L}_{-1}=\C L_{-1}\oplus\C Y_{-\frac12} \oplus \mathcal{L}_0.$
Then we obtain a filtration of the form
\begin{equation*}
\mathcal{L}\supset\mathcal{L}_{-1}\supset\mathcal{L}_0\cdots\supset\mathcal{L}_n\supset\cdots,
\end{equation*}
which satisfies that $\mathcal{L}_{-1}=\textit{Lie}(\widetilde {\rm SV})^+$, $[\mathcal{L}_0, \mathcal{L}_0]=\C M_0+\C Y_{\frac{1}{2}}+\mathcal{L}_1$ and $[\pa,\mathcal{L}_n]=\mathcal{L}_{n-1}$ for $n>0$.

\begin{lemm}\label{E1} For any fixed positive integer $N$,
$\LL_0/\LL_N$ is a finite-dimensional solvable Lie algebra.
\end{lemm}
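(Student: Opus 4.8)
The plan is to read off a grading from the commutation relations \eqref{LB} and reduce everything to two easy computations. I would assign to each of $L_m,M_m,N_m$ the weight $m$ and to $Y_{m+\frac12}$ the weight $m+\frac12$; a glance at \eqref{LB} shows that the bracket of a weight-$a$ element and a weight-$b$ element is homogeneous of weight $a+b$ (for instance $[Y_p,Y_{q}]=(p-q)M_{p+q}$ has weight $p+q$, and $[L_m,Y_p]=(\tfrac m2-p)Y_{m+p}$ has weight $m+p$). Hence $\mathcal{L}_0$ is a Lie algebra graded by $\tfrac12\Z_{\geqslant0}$, and $\mathcal{L}_n$ is precisely the sum of the homogeneous components of weight $\geqslant n$. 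Since the weight of a bracket having one factor of weight $\geqslant N$ is again $\geqslant N$, each $\mathcal{L}_N$ is an ideal of $\mathcal{L}_0$, so $\mathcal{L}_0/\mathcal{L}_N$ is a Lie algebra; and it has a basis consisting of the images of $L_i,M_i,N_i$ and $Y_{i+\frac12}$ for $0\leqslant i\leqslant N-1$, whence $\dim_\C(\mathcal{L}_0/\mathcal{L}_N)=4N<\infty$.

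For solvability I would use the ideal $\mathcal{L}_1/\mathcal{L}_N$ of $\mathcal{L}_0/\mathcal{L}_N$ in two steps. First, $\mathcal{L}_1$ is concentrated in weights $\geqslant1$, so an iterated bracket of $k$ elements of $\mathcal{L}_1$ has weight $\geqslant k$; thus the $k$-th term of the lower central series of $\mathcal{L}_1$ lies in $\mathcal{L}_k$, hence in $\mathcal{L}_N$ as soon as $k\geqslant N$, and $\mathcal{L}_1/\mathcal{L}_N$ is nilpotent, a fortiori solvable. Second, $(\mathcal{L}_0/\mathcal{L}_N)\big/(\mathcal{L}_1/\mathcal{L}_N)\cong\mathcal{L}_0/\mathcal{L}_1$, which has basis $L_0,M_0,N_0,Y_{\frac12}$; by \eqref{LB} the only nonvanishing brackets among these are $[N_0,M_0]=2M_0$, $[N_0,Y_{\frac12}]=Y_{\frac12}$ and $[L_0,Y_{\frac12}]=-\tfrac12Y_{\frac12}$, so $[\mathcal{L}_0/\mathcal{L}_1,\mathcal{L}_0/\mathcal{L}_1]=\C M_0+\C Y_{\frac12}$, which is abelian because $[M_0,Y_{\frac12}]=0$; hence $\mathcal{L}_0/\mathcal{L}_1$ is solvable of derived length at most $2$. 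Since an extension of a solvable Lie algebra by a solvable ideal is solvable, $\mathcal{L}_0/\mathcal{L}_N$ is solvable, as required.

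I do not expect any genuine obstacle; the only points needing a little care are the routine verification that \eqref{LB} respects the half-integer weighting of the $Y_p$ and the trivial case $N=1$ (where $\mathcal{L}_1/\mathcal{L}_N=0$). If one prefers to avoid the two-step reduction, essentially the same grading estimate gives a direct argument: one has $[\mathcal{L}_0,\mathcal{L}_0]=\C M_0+\C Y_{\frac12}+\mathcal{L}_1$ (recorded just before the statement), then $[[\mathcal{L}_0,\mathcal{L}_0],[\mathcal{L}_0,\mathcal{L}_0]]\subseteq\mathcal{L}_1$ because $[M_0,Y_{\frac12}]=0$, and finally $[\mathcal{L}_j,\mathcal{L}_j]\subseteq\mathcal{L}_{2j}$, which forces the $k$-th derived subalgebra of $\mathcal{L}_0/\mathcal{L}_N$ to vanish once $2^{k-2}\geqslant N$.
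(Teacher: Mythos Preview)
Your proof is correct. The underlying idea---using the half-integer weight grading on $\mathcal{L}_0$ to control how quickly iterated brackets climb into $\mathcal{L}_N$---is exactly what the paper uses, and your ``alternative'' at the end (showing $\mathcal{L}_0^{(1)}=\C M_0+\C Y_{\frac12}+\mathcal{L}_1$, then $\mathcal{L}_0^{(2)}\subseteq\mathcal{L}_1$, then $\mathcal{L}_0^{(2n)}\subset\mathcal{L}_n$) is precisely the paper's argument. Your main proof repackages this as ``$\mathcal{L}_1/\mathcal{L}_N$ is nilpotent and $\mathcal{L}_0/\mathcal{L}_1$ is solvable, hence the extension is solvable''; this is a cleaner structural statement and gives the same conclusion with no extra work, whereas the paper's direct derived-series estimate yields an explicit (if loose) bound $\mathcal{L}_0^{(2N)}\subset\mathcal{L}_N$ on the derived length.
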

\begin{proof} It is obvious to see that $\LL_N$ is an ideal of $\LL_0$, and $\LL_0/\LL_N$ is finite-dimensional. To prove solvability of $\LL_0/\LL_N$, we consider the derived subalgebras of $\LL_0$:
$\LL_0^{(0)}=\LL_0$ and $\LL_0^{(n+1)}=[\LL_0^{(n)}, \LL_0^{(n)}]$ for $n\geqslant 0$. By the facts that $\LL_0^{(1)}=[\mathcal{L}_0, \mathcal{L}_0]=\C M_0+\C Y_{\frac{1}{2}}+\mathcal{L}_1$, $[M_{0},\mathcal{L}_1]\subseteq \mathcal{L}_1$ and $[Y_{\frac12},\mathcal{L}_1]\subseteq \mathcal{L}_1$, we obtain $\LL_0^{(2)}\subseteq \mathcal{L}_1$. By induction on $n$, one can obtain that $\LL_0^{(2n)}\subset\LL_n$ for all $n\geqslant 0$. Therefore, for any fixed positive integer $N$, we always have $\LL_0^{(2N)}\subset\LL_N$.
This completes the proof.
\end{proof}

\begin{theo}\label{main} Let $V$ be a nontrivial irreducible conformal module over the extended Schr\"odinger-Virasoro conformal algebra $\widetilde
{\rm SV}$. Then $V\cong V(\alpha,\beta,\Delta)$, which is defined by \eqref{key-key} with $(\Delta,\beta)\neq (0,0)$.
\end{theo}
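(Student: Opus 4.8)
The plan is to realize $V$ as a conformal module over the extended annihilation algebra $\LL = \textit{Lie}(\widetilde{\rm SV})^e$, as explained in Section 2, and then to mimic the Cartan-type/Weisfeiler-filtration arguments used in \cite{CK,WY}. First I would invoke Lemma \ref{I}: since $V$ is finite, nontrivial and irreducible, it is a free $\C[\pa]$-module of some finite rank. Working with $\LL$, I would use the filtration $\mathcal{L}\supset\mathcal{L}_{-1}\supset\mathcal{L}_0\supset\cdots$ together with the conformality condition \eqref{conformal}, which says that for each $v\in V$ the elements $L_m, M_n, N_n, Y_p$ act as zero for $m,n,p\gg 0$; hence each $v$ is annihilated by some $\mathcal{L}_N$. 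The key preliminary step is to show that $\mathcal{L}_0$ acts trivially on $V$. To this end, fix $N$ large enough that $\mathcal{L}_N v = 0$ for a set of generators; by Lemma \ref{E1}, $\mathcal{L}_0/\mathcal{L}_N$ is a finite-dimensional solvable Lie algebra, so by Lie's theorem it has a common eigenvector $v_0\in V$ (after passing to the algebraic closure, which is already $\C$). One then shows, using the relations \eqref{LB} and irreducibility, that the corresponding eigenvalues must all vanish: the brackets $[L_1,L_{-1}] = 2L_0$, $[L_1,M_0]=-M_1$, etc., force the eigenvalues on the derived algebra $\mathcal{L}_0^{(1)}=\C M_0+\C Y_{1/2}+\mathcal{L}_1$ to be $0$, and a standard $\mathfrak{sl}_2$-type or $L_{-1}$-translation argument propagates this to all of $\mathcal{L}_0$, so the submodule of $\mathcal{L}_0$-invariants is nonzero, hence by irreducibility equals $V$.

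Once $\mathcal{L}_0 V = 0$, the module structure is controlled by the finite-dimensional quotient $\mathcal{L}_{-1}/\mathcal{L}_0$ acting on $V$, modulo the central element $L_{-1}-\pa$ from Lemma \ref{center}. Concretely, $V$ becomes a module over the abelian-by-Virasoro structure carried by $L_{-1}, Y_{-1/2}$ and $\pa$. The next step is to pin down the actions of $M$, $Y$ and $N$. From $\mathcal{L}_0 V=0$ we get $M_n V = 0$ and $Y_p V=0$ for all $n\geq 0$, $p\geq \tfrac12$; translating back to $\la$-actions via $a_\la v = \sum_j (a_{(j)}v)\la^j/j!$, and using that $M_{(j)}$ and $Y_{(j)}$ correspond to $M_j$, $Y_{j-1/2}$, this yields $M_\la v = Y_\la v = 0$ on $V$, i.e. $M$ and $Y$ act trivially. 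Similarly $N_n V = 0$ for $n\geq 1$ (since $N_n\in\mathcal{L}_0$ for $n\geq 0$... here one must be slightly careful: $N_0\in\mathcal{L}_0$, so in fact $N_0 V=0$ too, which forces $\beta$ to arise only through the $\la^0$ term — but recomputing, $N_\la v$ is determined by $N_{(0)}v = N_0 v$, so one instead argues that $N$ acts as a scalar by a separate eigenvalue computation on the common eigenvector before concluding $\mathcal{L}_0 V=0$; the correct order is to extract the eigenvalue $\beta$ of $N_0$ first). Thus the surviving structure is: $L$ acts as a Virasoro conformal action and $N$ acts by a constant $\beta$, with $M,Y$ trivial.

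It then remains to identify this reduced structure with $V(\alpha,\beta,\Delta)$. Restricting to the Virasoro subalgebra $\Vir = \C[\pa]L$, the module $V$ is a finite conformal $\Vir$-module with no nonzero torsion (by Lemma \ref{I}), and by the classification of \cite{CK} recalled in Section 2, if it is irreducible over $\Vir$ it must be some $M(\alpha,\Delta)$ with $\Delta\neq 0$; if it is not irreducible over $\Vir$ one uses the compatibility of the $N$-action (scalar $\beta\neq 0$) with the $\Vir$-action and the irreducibility over $\widetilde{\rm SV}$ to force $V$ of rank one, exactly as in the proof of Proposition \ref{key1}(2), giving $V\cong V(\alpha,\beta,\Delta)$ with $(\Delta,\beta)\neq(0,0)$. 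Conversely these are all irreducible by Proposition \ref{key1}(2), so the list is complete.

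The main obstacle I anticipate is the triviality-of-$\mathcal{L}_0$ step: one must carefully combine Lie's theorem (applicable thanks to Lemma \ref{E1}) with the precise bracket relations \eqref{LB} to kill all the eigenvalues on $\mathcal{L}_0$ while correctly isolating the one genuinely free parameter $\beta = $ eigenvalue of $N_0$ — and one must handle the subtlety that the common eigenvector lives in a single $\C[\pa]$-free module, so that the propagation of "eigenvalue zero" along $\pa$-translates (via $[\pa,\mathcal{L}_n]=\mathcal{L}_{n-1}$) and along $L_{-1}$ actually reaches every homogeneous component of $V$. The analogous argument for $\mathrm{SV}$ appears in \cite{WY}, so the new content is purely the bookkeeping introduced by the extra generator $N$ and the relations \eqref{LB}, \eqref{LB2} involving it.
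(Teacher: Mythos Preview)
Your proposal has a fundamental misstep: you aim to prove that $\mathcal{L}_0$ acts trivially on all of $V$, but this is simply false for the modules you are trying to classify. On $V(\alpha,\beta,\Delta)$ one has $L_0 v_\Delta = \Delta v_\Delta$ and $N_0 v_\Delta = \beta v_\Delta$ (recall $L_{(1)}=L_0$ and $N_{(0)}=N_0$), and both $L_0,N_0\in\mathcal{L}_0$; so whenever $(\Delta,\beta)\neq(0,0)$ --- which is exactly the nontrivial case --- $\mathcal{L}_0$ acts nontrivially. You partially notice this in your parenthetical about $N_0$, but the same issue applies to $L_0$, so there are \emph{two} free eigenvalues ($\Delta$ and $\beta$), not one, and no ``$\mathfrak{sl}_2$-type propagation'' will kill them. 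What the paper actually does, and what you should do, is use Lie's theorem (via Lemma~\ref{E1}) only to find a single common \emph{eigenvector} $u$ for $\mathcal{L}_0$, read off $\Delta=\chi(L_0)$ and $\beta=\chi(N_0)$, observe that the derived subalgebra $[\mathcal{L}_0,\mathcal{L}_0]=\C M_0\oplus\C Y_{1/2}\oplus\mathcal{L}_1$ kills $u$, and then show that $\C[\partial]u$ is already an $\widetilde{\rm SV}$-submodule, hence equals $V$ by irreducibility.

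A second, independent gap is the treatment of $Y_{-\frac12}$. This element lies in $\mathcal{L}_{-1}\setminus\mathcal{L}_0$, so neither your (incorrect) claim $\mathcal{L}_0 V=0$ nor the (correct) eigenvector statement says anything about $Y_{-\frac12}u$; yet $Y_{(0)}=Y_{-\frac12}$, so $Y_\lambda u = Y_{-\frac12}u + (\text{higher terms})$ and the conclusion $Y_\lambda u=0$ requires $Y_{-\frac12}u=0$. The paper handles this by an argument absent from your sketch: assuming $Y_{-\frac12}u\neq 0$, freeness of $V$ over $\C[\partial]$ (Lemma~\ref{I}) forces a $\C[\partial]$-linear relation among the elements $\partial^j(Y_{-\frac12}u)$; one then applies a suitable $N_n$, using $N_0(Y_{-\frac12}u)=Y_{-\frac12}u$, $N_k(Y_{-\frac12}u)=0$ for $k>0$, together with the commutation of $N_n$ past powers of $\partial$ coming from \eqref{LB2}, to collapse this relation to $Y_{-\frac12}u=0$, a contradiction. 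Without this step $\C[\partial]u$ is not known to be $Y$-stable, so it need not be a submodule, and your reduction to the rank-one classification does not go through.
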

\begin{proof} By \eqref{conformal}, the conformal $\widetilde
{\rm SV}$-module $V$ can be viewed as a module over $\LL$ satisfying
\begin{eqnarray}
\LL_n v=0, \ \  for \ v\in V,\ \  n\gg 0.
\end{eqnarray}
Set $V_n=\{v\in V\,|\,\LL_n v=0\}$. Then $V_n\neq \{0\}$ for $n\gg 0$.
Let $S$ be the smallest integer such that $V_S\neq\{0\}$. We have $S\geq 0$ (if not, $V$ is a trivial $\LL$-module). By \cite[Lemma 3.1]{CK}, $V_S$ is a finite-dimensional vector space.
  By Lemma \ref{center} and Schur's Lemma, there exists some $\a\in\C$ such that $(L_{-1}-\partial) v=\a v$ for all $v\in V$. Thus
\begin{eqnarray}
L_{-1} v=(\pa+\a) v,\  \forall\ v\in V.
\end{eqnarray}

Suppose that $S=0$. Thus $\LL_0$ acts trivially on $V_0$. Let $u$ be a nonzero element in $V_0$. We claim that $Y_{-\frac12}u=0$. If not, Lemma \ref{I} implies that $w_j=\partial^j (Y_{-\frac12}u)\neq0$, $\forall j\geq 0$. Since $V$ is a free $\C[\partial]$-module of finite rank, there exist $m \in \mathbb{Z}^{+} \text { and } f_{1}(\partial), \ldots, f_{m}(\partial) \in \mathbb{C}[\partial]$ such that
\begin{eqnarray}\label{&&}
\sum_{j=0}^{m} f_{j}(\partial) w_{j}=0
\end{eqnarray}
Denote $n=\max \left\{\operatorname{deg} f_{1}(\partial)+1, \ldots, \operatorname{deg} f_{m}(\partial)+m\right\}$ .
Using $ R_{\partial}=L_{\partial}-\mathrm{ad}_{\partial}$ and the binomial formula, we have, for $k \leq n$,
 \begin{eqnarray}N_{n} \partial^{k}=R_{\partial}^k N_{n}= \left(L_{\partial}-\mathrm{ad}_{\partial}\right)^{k} N_{n}=\sum_{i=0}^{k} \frac{k!n!}{(k-i)!i!(n-i)!} \partial^{k-i} N_{n-i}. \end{eqnarray}
It is easy to see that $N_0(Y_{-\frac12}u)=Y_{-\frac12}u$ and $N_k(Y_{-\frac12}u)=0$ for all $k>0$. By the action of $N_{n}$ on \eqref{&&}, we have $Y_{-\frac12}u=0$. This contradicts the assumption and thus proves the claim.
Now we see that $\C[\pa]u$ is a submodule of $V$. Hence $V=\C[\pa]u$ by the irreducibility of $V$. The $\la$-action of $\widetilde
{\rm SV}$ on $V=\C[\pa]u$ is given by $M_\la u=N_\la u=Y_\la u=0$ and
\begin{eqnarray*}
L_\la u&=&\sum_{j\in\Z^{+}}(L_{(j)}u)\frac{\lambda^j}{j!}=\sum_{j\in\Z^{+}}(L_{j-1} u)\frac{\lambda^j}{j!}=L_{-1} u=(\partial+\alpha)u.
\end{eqnarray*}
By Proposition \ref{key1}, $V$ is reducible, a contradiction.

  Now we have $S\geqslant 1$. By \cite[Lemma 3.1]{CK}, $V_S$ is a finite-dimensional vector space.
Note that $V_S$ is actually an $\LL_0/\LL_S$-module. By Lemma \ref{E1},
there exists a nonzero common eigenvector $u\in V_S$ under the action of $\LL_0/\LL_S$, and then the action of $\LL_0$.
Namely, there exists a linear function $\chi$ on $\LL_0$ such that $x u=\chi(x) u$ for any $x\in\LL_0$. Since $[\LL_0, \LL_0]=\C M_{0}\bigoplus\C Y_{\frac12}\bigoplus \LL_1$, $M_0 u=Y_{\frac12} u=\LL_1 u=0$.
Assume that $\chi(L_0)=\Delta $ and $\chi(N_0)=\beta$ for some $\Delta, \beta\in \C$. It follows $L_0 u=\Delta u$ and $N_0 u=\beta u$. Because $S\geqslant 1$, $(\Delta,\beta)\neq (0,0)$. With a similar discussion as in the case $S=0$, we have $Y_{-\frac12}u=0$.
 Now we can conclude that $\C[\partial]u$ is a nonzero submodule of $V$. Therefore $V=\C[\partial]u$. The $\la$-action of $\widetilde
{\rm SV}$ on $V$ is given by $M_\la u=Y_\la u=0$ and
\begin{eqnarray*}
L_\la u&=&\sum_{j\in\Z^{+}}(L_{(j)}u)\frac{\lambda^j}{j!}=\sum_{j\in\Z^{+}}(L_{j-1} u)\frac{\lambda^j}{j!}=L_{-1} u+(L_0 u)\la=(\partial+\alpha+\Delta\lambda)u,\\
N_\la u&=&\sum_{j\in\Z^{+}}(N_{(j)}u)\frac{\lambda^j}{j!}=\sum_{j\in\Z^{+}}(N_{j} u)\frac{\lambda^j}{j!}=N_0 u=\beta u.
\end{eqnarray*}
 By Proposition \ref{key1}, $V\cong V(\alpha,\beta,\Delta)$ with $(\Delta,\beta)\neq (0,0)$. The proof is finished.
\end{proof}

\section{Extensions of conformal $\widetilde
{\rm SV}$-modules}

In this section, we study extensions of conformal $\widetilde{\mathrm{SV}}$-modules of the following three types:
\begin{eqnarray}
&&0\longrightarrow \C{c_\gamma}\longrightarrow E_1 \longrightarrow V(\alpha,\b,\Delta) \longrightarrow 0,\label{estype1}\\
&&0\longrightarrow V(\alpha,\b,\Delta)\longrightarrow E_2 \longrightarrow \C c_\gamma \longrightarrow 0,\label{estype2}\\
&&0\longrightarrow V(\bar\alpha,\bar\beta,\bar\Delta)\longrightarrow E_3 \longrightarrow V(\alpha,\beta,\Delta) \longrightarrow 0, \label{estype3}
\end{eqnarray}
where $V(\bar\alpha,\bar\beta,\bar\Delta)$ and $V(\alpha,\beta,\Delta)$ are nontrivial irreducible conformal $\widetilde{\mathrm{SV}}$-modules (see Theorem \ref{main}). Thus we will exclude the cases $(\Delta,\b)=(0,0)$ and $(\bar\Delta,\bar\b)=(0,0)$ in the sequel.

By Definition \ref{def1}, an $\widetilde{\mathrm{SV}}$-module structure on $V$ is given by $L_\lambda, M_\lambda, Y_\lambda, N_\lambda \in \rm{End}_\C(V)[\lambda]$ such that \eqref{sv1}--\eqref{sv9}, and the following hold:
\begin{eqnarray}
&&[L_\lambda{},N_\mu{}]=-\mu{}N_{\lambda+\mu},\label{esv1}\\
&&[N_\lambda{},Y_\mu{}]=Y_{\lambda{}+\mu},\label{esv2}\\
&&[N_\lambda{},M_\mu{}]=2M_{\lambda{}+\mu},\label{esv3}\\
&&[N_\lambda{},N_\mu{}]=0,\label{esv4}\\
&&[\partial{},N_\lambda{}]=-\lambda{}N_\lambda.\label{esv5}
\end{eqnarray}

We first consider extensions of $\widetilde{\mathrm{SV}}$-modules of the form \eqref{estype1}. As a $\C[\partial]$-module, $E_1$ in \eqref{estype1} is isomorphic to $\C {c_\gamma}\bigoplus V(\alpha,\b,\Delta)$, where $\C {c_\gamma}$ is an $\widetilde{\mathrm{SV}}$-submodule and $V(\alpha,\b,\Delta)=\C[\partial]v_\Delta$ such that 
\begin{eqnarray}\label{es1}
L_\lambda{}v_\Delta{}=(\partial{}+\alpha{}+\Delta{}\lambda{})v_\Delta{}+f(\lambda{})c_\gamma{},\ \
M_\lambda{}v_\Delta{}=g(\lambda{})c_\gamma{},\ \
Y_\lambda{}v_\Delta{}=h(\lambda{})c_\gamma{}, \ \ N_\lambda{}v_\Delta{}=\b v_\Delta+k(\lambda{})c_\gamma{},
\end{eqnarray}
where $f(\lambda)=\sum_{n\geqslant0}f_n\lambda^n,\,g(\lambda)=\sum_{n\geqslant0}g_n\lambda^n,$ $h(\lambda)=\sum_{n\geqslant0}h_n\lambda^n,$ $k(\lambda)=\sum_{n\geqslant0}k_n\lambda^n\in\C[\la]$.

\begin{lemm}\label{lem6} All trivial extensions of the form \eqref{estype1} are of the form \eqref{es1}, where $g(\lambda)=h(\lambda)=0$, $f(\lambda)=a(\alpha+\gamma+\Delta\lambda)$ and $k(\lambda)=a\b$ for some $a\in\C$.
\end{lemm}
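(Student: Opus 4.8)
The plan is to mirror the proof of Lemma~\ref{lem1}. By the definition of a trivial extension (Definition~\ref{def2}), the sequence \eqref{estype1} is trivial exactly when it splits as a sequence of $\widetilde{\mathrm{SV}}$-modules, equivalently when the cocycle $(f,g,h,k)$ in \eqref{es1} is a coboundary. In splitting language this means there exist $b\in\C$ and a nonzero $\varphi(\partial)\in\C[\partial]$ for which $v'=\varphi(\partial)v_\Delta+bc_\gamma\in E_1$ generates a $\C[\partial]$-complement of $\C c_\gamma$ carrying the module structure of $V(\alpha,\beta,\Delta)$, i.e.
\[L_\lambda v'=(\partial+\alpha+\Delta\lambda)v',\qquad M_\lambda v'=Y_\lambda v'=0,\qquad N_\lambda v'=\beta v'.\]
First I would expand each of these actions directly from \eqref{es1}, using conformal sesquilinearity in the form $a_\lambda\varphi(\partial)v_\Delta=\varphi(\partial+\lambda)a_\lambda v_\Delta$ together with the facts that $\C c_\gamma$ is a trivial module on which $\partial$ acts by the scalar $\gamma$. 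This yields $M_\lambda v'=\varphi(\partial+\lambda)g(\lambda)c_\gamma$, $Y_\lambda v'=\varphi(\partial+\lambda)h(\lambda)c_\gamma$, and
\[L_\lambda v'=\varphi(\partial+\lambda)\bigl((\partial+\alpha+\Delta\lambda)v_\Delta+f(\lambda)c_\gamma\bigr),\qquad N_\lambda v'=\varphi(\partial+\lambda)\bigl(\beta v_\Delta+k(\lambda)c_\gamma\bigr).\]

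Next I would equate the two descriptions of $L_\lambda v'$ and compare components in the decomposition $E_1=\C[\partial]v_\Delta\oplus\C c_\gamma$. The $v_\Delta$-component gives $(\partial+\alpha+\Delta\lambda)\bigl(\varphi(\partial+\lambda)-\varphi(\partial)\bigr)=0$ in the integral domain $\C[\partial,\lambda]$, so $\varphi$ must be a nonzero constant, say $\varphi=a$. Feeding this back in, the $c_\gamma$-components of the four identities read off $g(\lambda)=h(\lambda)=0$, $af(\lambda)=b(\alpha+\gamma+\Delta\lambda)$ and $ak(\lambda)=b\beta$; renaming the scalar $b/a$ as $a$, this is exactly the asserted form $f(\lambda)=a(\alpha+\gamma+\Delta\lambda)$, $g(\lambda)=h(\lambda)=0$, $k(\lambda)=a\beta$. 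For the converse I would check that, given such data, the vector $v'=v_\Delta+ac_\gamma$ does generate a complementary submodule isomorphic to $V(\alpha,\beta,\Delta)$, which is immediate from $\partial c_\gamma=\gamma c_\gamma$.

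I do not anticipate a genuine obstacle here: this is essentially the same routine computation as in Lemma~\ref{lem1}, the only new feature being the $N$-action, which behaves just like the others. The two points requiring a little care are remembering that $\partial$ acts as the scalar $\gamma$ on $c_\gamma$ (so that $(\partial+\alpha+\Delta\lambda)c_\gamma=(\alpha+\gamma+\Delta\lambda)c_\gamma$), and the cancellation of the nonzero factor $\partial+\alpha+\Delta\lambda$ in $\C[\partial,\lambda]$ that forces $\varphi$ to be constant. As announced after Lemma~\ref{lem1}, the remaining bookkeeping may be suppressed.
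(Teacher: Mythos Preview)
Your proposal is correct and follows precisely the approach intended by the paper: the proof of Lemma~\ref{lem6} is explicitly omitted there as one of the ``similar calculations'' announced before Lemma~\ref{lem1}, and your argument is exactly that computation with the extra $N$-action handled in the same way. The only cosmetic point is that a genuine splitting already forces $\varphi(\partial)=1$ (since $v'$ must project to $v_\Delta$), but deriving this instead from the $v_\Delta$-component of the $L_\lambda$-equation, as both you and the paper do in Lemma~\ref{lem1}, is harmless.
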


\begin{theo}\label{th5-1}
Nontrivial extensions of $\widetilde{\mathrm{SV}}$-modules of the form \eqref{estype1} exist
if and only if $\alpha+\gamma=0$ and $(\Delta,\beta)=(1,0), (2,0),$ or $ (-\frac12,-1)$. In these cases, they are given (up to equivalence) by \eqref{es1}, where
\begin{itemize}
\item[{\rm (i)}] $(\Delta,\beta)=(1,0)$, $g(\lambda)=h(\lambda)=0$, $k(\lambda)=k_1 \lambda$ and $f(\lambda)=a_2\lambda^2$, with $(k_1, a_2)\neq (0,0)$.
\item[{\rm (ii)}] $(\Delta,\beta)=(2,0)$, $g(\lambda)=h(\lambda)=k(\lambda)=0$ and $f(\lambda)=a_3\lambda^3$, with $a_3\neq0$.
\item[{\rm (iii)}] $(\Delta,\beta)=(-\frac 12,-1)$, $f(\lambda)=g(\lambda)=k(\lambda)=0$ and $h(\lambda)=b_0$, with $b_0\neq0$.
\end{itemize}
In particular,
\begin{eqnarray}
{\rm dim}_\C\big({{\rm Ext}}(V(\alpha, \beta,\Delta),\C{c_{-\alpha}})\big)=\left\{
\begin{array}{ll}
2, &(\Delta,\beta)=(1,0),\\
1, &(\Delta,\beta)=(2,0),\\
1, &(\Delta,\beta)=(-\frac 12,-1).
\end{array}
\right.
\end{eqnarray}
\end{theo}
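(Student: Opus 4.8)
The plan is to prove the theorem by analyzing the functional equations that arise from applying the defining relations of $\widetilde{\mathrm{SV}}$ to the vector $v_\Delta$ in the extension \eqref{es1}. First I would apply both sides of \eqref{sv1}--\eqref{sv4}, \eqref{esv1}--\eqref{esv4} to $v_\Delta$ to obtain a system of equations in the polynomials $f(\lambda)$, $g(\lambda)$, $h(\lambda)$, $k(\lambda)$. The equations coming from $[L_\lambda,L_\mu]$, $[L_\lambda,Y_\mu]$, $[Y_\lambda,Y_\mu]$ are exactly \eqref{s1}, \eqref{s2}, \eqref{s3} from the $\mathrm{SV}$-case (Theorem \ref{theo1}), so $g(\lambda)=0$ immediately, and the relation $[L_\lambda,M_\mu]=-\mu M_{\lambda+\mu}$ confirms this. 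The new input is the relation $[L_\lambda,N_\mu]=-\mu N_{\lambda+\mu}$, which upon application to $v_\Delta$ yields an equation of the form $(\lambda-\mu)k(\lambda+\mu) = \cdots$ relating $k$, $f$ and $\beta$; the relations $[N_\lambda,Y_\mu]=Y_{\lambda+\mu}$, $[N_\lambda,M_\mu]=2M_{\lambda+\mu}$ give equations tying $h$ and $g$ to $\beta$; and $[N_\lambda,N_\mu]=0$ gives a constraint purely on $k$.

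Second, I would split into cases according to whether $\alpha+\gamma=0$. As in the $\mathrm{SV}$-case, the equation from \eqref{sv2} with $\lambda=0$ forces $(\alpha+\gamma)h(\mu)=0$; combined with the analogous consequences for $k$ and the equation from \eqref{sv1} with $\mu=0$, one shows that $\alpha+\gamma\neq 0$ forces everything (modulo Lemma \ref{lem6}) to be a trivial cocycle, a contradiction. So we may assume $\alpha+\gamma=0$. Then I would extract from $[L_\lambda,N_\mu]$ applied to $v_\Delta$ the relation governing $k(\lambda)$: setting $\mu=0$ should pin down $k(\lambda)$ to be linear (or zero), and force a relation between $\beta$ and $\Delta$ whenever $k\neq 0$. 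Similarly, as in Theorem \ref{theo1}, the equation from \eqref{sv2} forces $h(\lambda)=b_0$ constant, with $b_0\neq 0$ only if $\Delta=-\tfrac12$; then $[N_\lambda,Y_\mu]=Y_{\lambda+\mu}$ applied to $v_\Delta$ — which reads roughly $\beta h(\mu) + k(\lambda)\cdot 0 = h(\lambda+\mu)$ after using $M_\lambda c_\gamma=0$ — should force $\beta=-1$ in that case. Likewise $[N_\lambda, M_\mu]$ and $[N_\lambda,N_\mu]$ must be checked for consistency.

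Third, I would assemble the three surviving cases. When $h=g=k=0$ the problem reduces to the $\mathrm{SV}$-case, hence to the Virasoro case (Theorem \ref{th2}), giving $\Delta=1$ with $f(\lambda)=a_2\lambda^2$ or $\Delta=2$ with $f(\lambda)=a_3\lambda^3$, both with $\beta=0$ (the $\beta$-value being forced by the $N$-equations once $k=0$ and $h=0$). When $h\neq 0$ we get $(\Delta,\beta)=(-\tfrac12,-1)$ with $h(\lambda)=b_0$. When $k\neq 0$ we get $\Delta=1$, $\beta=0$, $k(\lambda)=k_1\lambda$, which can coexist with the $f(\lambda)=a_2\lambda^2$ deformation — this is the source of the $2$-dimensional $\mathrm{Ext}$ in case (i). Finally I would invoke Lemma \ref{lem6} to subtract off the trivial cocycles (scalar multiples of $f(\lambda)=a(\alpha+\gamma+\Delta\lambda)$, $k(\lambda)=a\beta$) and confirm that the listed cocycles are genuinely nontrivial, and count dimensions of $\mathrm{Ext}$ in each case.

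The main obstacle I expect is the careful bookkeeping in the mixed case $\Delta=1$, $\beta=0$: here both an $f$-deformation and a $k$-deformation are present, and I need to verify that the full system \eqref{sv1}--\eqref{esv4} (not just the equations I singled out) is satisfied simultaneously, so that the two cocycles are independent and span a genuinely $2$-dimensional extension space rather than being linked by some hidden relation. A secondary subtlety is making sure the $N$-relations do not impose extra constraints in the $\Delta=2$ and $\Delta=-\tfrac12$ cases that would kill those extensions — in particular checking that $k(\lambda)=0$ is forced when $\Delta=2$, and that the value $\beta=-1$ (rather than merely $\beta=0$) is what the relation $[N_\lambda,Y_\mu]=Y_{\lambda+\mu}$ demands when $h\neq 0$.
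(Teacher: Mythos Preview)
Your plan is correct and follows essentially the same approach as the paper: derive the functional equations \eqref{s1}--\eqref{s3} together with the new equations from \eqref{esv1}, \eqref{esv2}, \eqref{esv4}, reduce to $\alpha+\gamma=0$ via Lemma~\ref{lem6}, and then split into cases. The only cosmetic difference is that the paper organizes the case analysis by whether $\beta=0$ or $\beta\neq0$ (which immediately forces $h=0$ in the first case and renders $f,k$ trivial in the second), whereas you organize by which of $h,k,f$ is nonzero; the two are equivalent and lead to the same computations and conclusions.
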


\begin{proof} By Lemma \ref{lem6}, we only need to prove the necessity. Applying \eqref{esv1}, \eqref{esv2} and \eqref{esv4} to $v_\Delta$ gives
\begin{eqnarray}
&&\beta f(\lambda)-(\alpha+\gamma+\mu+\Delta\lambda)k(\mu)=-\mu k(\lambda+\mu),\label{esv6}\\
&&\beta h(\mu)+h(\lambda+\mu)=0,\label{esv7}\\
&&\beta k(\lambda)-\beta k(\mu)=0.\label{esv9}
\end{eqnarray}

 By \eqref{s3}, $g(\lambda)=0$. If $\alpha+\gamma\neq0,$ then the proof of Theorem \ref{theo1} shows that  $f(\lambda)=\frac{f(0)}{\alpha+\gamma}(\alpha+\gamma+\Delta\lambda)$ and $h(\lambda)=0$. Putting $\lambda=0$ in \eqref{esv6} gives $k(\mu)=\frac{f(0)}{\alpha+\gamma}\b$. Set $a=\frac{f(0)}{\alpha+\gamma}$. It follows from Lemma \ref{lem6} that
the corresponding extension is trivial. Hence $\alpha+\gamma=0.$

Assume that $\beta=0$. It follows from \eqref{esv7} that $h(\lambda)=0$. And \eqref{esv6} amounts to
\begin{eqnarray}
(\mu+\Delta\lambda)k(\mu)=\mu k(\lambda+\mu),\label{esv6-6}
\end{eqnarray}
Setting $\mu=0$ in \eqref{esv6-6} gives $k(0)=0$ since $\Delta\neq0$. Substituting $k(\lambda)=\sum_{n\geqslant0}k_n\lambda^n$ into \eqref{esv6-6} and then comparing the coefficients of $\la^n$, we obtain that $n\leqslant 1$. It is easy to see that  $k(\lambda)=k_1 \lambda$ if $\Delta=1$, or else $k(\lambda)=0$. By \eqref{s1} with $\Delta=1$, we get $f(\lambda)=a_2 \lambda^2$ with $a_2\in\C$. Thus the corresponding extension is nontrivial in the case $(k_1, a_2)\neq (0,0)$. If $\Delta\neq1$, then it follows from \eqref{s1} that $\Delta=2$ and $f(\lambda)=a_3 \lambda^3$ with $a_3\in\C$. The corresponding extension is nontrivial unless $a_3=0$.

Now assume that $\beta\neq0$. By $\eqref{esv9}$ with $\mu=0$, $k(\lambda)=k(0)$ is a constant. Putting $\mu=0$ in \eqref{esv6} gives $f(\lambda)=\frac{k(0)}{\beta}\Delta\lambda$. Set $a=\frac{k(0)}{\beta}$. Then $f(\lambda)=a\Delta\lambda$ and $k(\lambda)=a \beta$. They are trivial cocycles by Lemma \ref{lem6}. Thus we may assume that $k(\lambda)=f(\lambda)=0$. In this case, the extension is nontrivial unless $h(\la)=0$. By \eqref{esv7} with $\mu=0$, $h(\la)=-\beta h(0)$ is a constant. If $h(0)\neq0$, then $\beta=-1$, and $\Delta=-\frac12$ by \eqref{s2}. This completes the proof.
\end{proof}

Next we consider extensions of conformal $\widetilde{\mathrm{SV}}$-modules of the form \eqref{estype2}.
As a vector space, $E_2$ in \eqref{estype2} is isomorphic to $V(\alpha,\b,\Delta)\oplus\C {c_\gamma}$. Here $V(\alpha,\b,\Delta)=\C[\partial]v_\Delta$ is an $\widetilde{\mathrm{SV}}$-submodule and the action on $c_\gamma$ is given by
\begin{eqnarray}\label{2m2}
L_\lambda c_\gamma=f(\partial{},\lambda{})v_\Delta{},\ \, M_\lambda c_\gamma=g(\partial{},\lambda{})v_\Delta{},\ \,
Y_\lambda c_\gamma=h(\partial{},\lambda{})v_\Delta{},\ \, N_\lambda c_\gamma=k(\partial{},\lambda{})v_\Delta{},\ \,
\partial{}c_\gamma=\gamma{}c_\gamma{}+a(\partial{})v_\Delta,
\end{eqnarray}
where $f(\partial,\lambda),\, g(\partial,\lambda),\, h(\partial{},\lambda{}),\,k(\partial,\lambda) \in\C[\partial,\lambda]$ and $a(\partial)\in\C[\partial]$.
\begin{lemm}\label{lem7} All trivial extensions of the form \eqref{estype2} are given by \eqref{2m2} with, up to the same scalar,
$g(\partial,\lambda)= h(\partial{},\lambda{})=0$, $f(\partial,\lambda)= (\alpha+\gamma+\Delta\lambda)\phi(\partial+\lambda)$, $k(\partial,\lambda)=\b \phi(\partial+\lambda)$, and $a(\partial)= (\partial-\gamma)\phi(\partial)$, where $\phi$ is a polynomial.
\end{lemm}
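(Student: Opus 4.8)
The plan is to recognize that a trivial extension of the form \eqref{estype2} is exactly a split one, and then to read off the constraints on $f,g,h,k,a$ from the requirement that a chosen splitting be a homomorphism of conformal modules. By the discussion following Definition~\ref{def2}, the cocycle recorded in \eqref{2m2} is a coboundary precisely when the sequence \eqref{estype2} splits, that is, when $E_2$ contains an $\widetilde{\mathrm{SV}}$-submodule complementary to $V(\alpha,\beta,\Delta)$ and isomorphic to the one-dimensional trivial module $\C c_\gamma$. Such a complement projects onto the quotient $\C c_\gamma$, so the $c_\gamma$-component of its generator is a nonzero scalar; after rescaling, the complement equals $\C c'_\gamma$ with
\[
c'_\gamma=c_\gamma-\phi(\partial)v_\Delta,\qquad \phi\in\C[\partial].
\]
The requirement that $\C c'_\gamma$ be such a submodule is precisely that $\partial c'_\gamma=\gamma c'_\gamma$ and $L_\lambda c'_\gamma=M_\lambda c'_\gamma=Y_\lambda c'_\gamma=N_\lambda c'_\gamma=0$.

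Next I would evaluate these five identities. Using \eqref{2m2}, the identity $r_\lambda\big(\phi(\partial)v\big)=\phi(\partial+\lambda)\big(r_\lambda v\big)$ valid for every $r\in\widetilde{\mathrm{SV}}$ (a consequence of the axioms in Definition~\ref{def1}), and the actions $L_\lambda v_\Delta=(\partial+\alpha+\Delta\lambda)v_\Delta$, $N_\lambda v_\Delta=\beta v_\Delta$, $M_\lambda v_\Delta=Y_\lambda v_\Delta=0$ coming from \eqref{key-key}, one computes
\[
\partial c'_\gamma=\gamma c_\gamma+\big(a(\partial)-\partial\phi(\partial)\big)v_\Delta,\qquad L_\lambda c'_\gamma=\big(f(\partial,\lambda)-(\partial+\alpha+\Delta\lambda)\phi(\partial+\lambda)\big)v_\Delta,
\]
together with $M_\lambda c'_\gamma=g(\partial,\lambda)v_\Delta$, $Y_\lambda c'_\gamma=h(\partial,\lambda)v_\Delta$ and $N_\lambda c'_\gamma=\big(k(\partial,\lambda)-\beta\phi(\partial+\lambda)\big)v_\Delta$. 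Imposing $\partial c'_\gamma=\gamma c'_\gamma$ and the vanishing of the remaining four expressions forces $g=h=0$, $f(\partial,\lambda)=(\partial+\alpha+\Delta\lambda)\phi(\partial+\lambda)$, $k(\partial,\lambda)=\beta\phi(\partial+\lambda)$ and $a(\partial)=(\partial-\gamma)\phi(\partial)$, which is the asserted list. Conversely, if $f,g,h,k,a$ have this shape for some polynomial $\phi$, the same computation shows that $c'_\gamma=c_\gamma-\phi(\partial)v_\Delta$ spans a submodule on which $\widetilde{\mathrm{SV}}$ acts by zero and $\partial$ acts by $\gamma$; since its $c_\gamma$-component is $1$, this submodule complements $V(\alpha,\beta,\Delta)$ in $E_2$, whence \eqref{estype2} splits. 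Replacing $\phi$ by a scalar multiple rescales the tuple $(f,g,h,k,a)$ simultaneously, which is what the phrase ``up to the same scalar'' records.

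This is the exact analogue of Lemma~\ref{lem2} with the extra generator $N$ included, and I do not expect any substantive obstacle: the argument is a finite, mechanical computation entirely parallel to the proofs of Lemmas~\ref{lem1} and~\ref{lem2}. The only points calling for a little care are the bookkeeping in the five evaluations above (in particular the substitution $\phi(\partial)\mapsto\phi(\partial+\lambda)$ forced by conformal sesquilinearity) and the observation that the candidate $\C c'_\gamma$ is genuinely complementary to the free $\C[\partial]$-module $V(\alpha,\beta,\Delta)$ inside $E_2$, which is immediate once the $c_\gamma$-component of its generator is normalized to be nonzero.
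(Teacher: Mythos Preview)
Your argument is correct and is exactly the computation the paper has in mind: Lemma~\ref{lem7} carries no separate proof in the paper, being declared straightforward in the spirit of Lemma~\ref{lem1} (whose proof you mirror, adapted to the situation where the free module is the submodule and the one-dimensional module is the quotient). One point worth flagging: your computation yields $f(\partial,\lambda)=(\partial+\alpha+\Delta\lambda)\phi(\partial+\lambda)$, which agrees with the analogous Lemma~\ref{lem2}, whereas the statement of Lemma~\ref{lem7} prints $(\alpha+\gamma+\Delta\lambda)\phi(\partial+\lambda)$; the latter appears to be a typographical slip in the paper (repeated in Corollary~\ref{coro2}), and your version is the correct one.
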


\begin{theo}\label{th5-2}
Nontrivial extensions of $\widetilde{\mathrm{SV}}$-modules of the form \eqref{estype2} exist if and only if $\alpha+\gamma=0$, $\beta=0$ and $\Delta=1$. In this case, ${\rm dim}_\C\big({\rm Ext}(\C{c_{-\alpha}}, V(\alpha,0, 1))\big)=1,$  and the only (up to a scalar) nontrivial extension is given by \eqref{2m2}
with $g(\partial,\lambda)=h(\partial,\lambda)=k(\partial,\lambda)=0$ and $f(\partial,\lambda)=a(\partial)=a_0\in \C^*$.
\end{theo}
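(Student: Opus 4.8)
The plan is to follow the same strategy used for Theorem \ref{theo2}, but now carrying along the extra generator $N$ and the extra parameter $\b$. First I would write out the functional equations obtained by applying the defining relations to the cyclic vector $c_\gamma$. Applying \eqref{sv1} and \eqref{sv5}--\eqref{sv7} to $c_\gamma$ reproduces the Virasoro-type equations \eqref{s5}--\eqref{s8}; in addition, applying \eqref{esv1}--\eqref{esv5} to $c_\gamma$ should yield equations governing $k(\pa,\la)$, schematically something like $(\pa+\la-\gamma)k(\pa,\la)=\b\,a(\pa+\la)$ from \eqref{esv5}, a compatibility relation between $f$ and $k$ coming from \eqref{esv1} (the bracket $[L_\la,N_\mu]$), and relations forcing $h$ and $g$ to vanish once $\b$ is taken into account (from \eqref{esv2}, \eqref{esv3}). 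The expectation is that \eqref{s7}, \eqref{s8} together with the new $N$-equations immediately give $g(\pa,\la)=h(\pa,\la)=0$, just as in the $\mathrm{SV}$ case.

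The core of the argument is then the Virasoro-type analysis of \eqref{s5}, \eqref{s6}. Exactly as in the proof of Theorem \ref{theo2}, setting $\mu=0$ in \eqref{s5} gives the divisibility relation \eqref{s9}, and the dichotomy $(\pa+\la+\alpha)\mid f(\pa+\la,0)$ versus not. In the divisible case one extracts $\phi$ and checks via \eqref{s6} (and the new equation relating $k$ to $a$) that the cocycle is trivial by Lemma \ref{lem7}; in the non-divisible case one forces $\Delta=1$, then $\alpha+\gamma=0$, and reduces to $f(\pa,\la)=a(\pa)=a_0$ a constant. The genuinely new input is that one must still pin down $k(\pa,\la)$ and show $\b=0$: with $\Delta=1$, $f=a_0$ and $a=a_0$, the equation from \eqref{esv5}, namely $(\pa+\la-\gamma)k(\pa,\la)=\b a_0$, combined with $\gamma=-\alpha$ forces either $\b a_0=0$; since a nontrivial extension requires $a_0\neq0$, we get $\b=0$, and then $k(\pa,\la)=0$. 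One should double-check consistency with the remaining relation coming from $[L_\la,N_\mu]$, which with $\b=0$ should be automatically satisfied by $k=0$.

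I would organize the write-up as: (1) invoke Lemma \ref{lem7} so only necessity needs proof; (2) derive the full list of functional equations; (3) dispose of $g$, $h$; (4) run the Virasoro dichotomy on $f$ to get $\Delta=1$, $\alpha+\gamma=0$, $f=a=a_0$ up to trivial cocycles; (5) use the $N$-equations to conclude $\b=0$ and $k=0$; (6) observe the extension is nontrivial iff $a_0\neq0$, giving the one-dimensional $\mathrm{Ext}$.

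The main obstacle I anticipate is bookkeeping rather than conceptual: making sure the $N$-related functional equations are written correctly (the signs and the factor $2$ in \eqref{esv3}, and the precise form of the $[L_\la,N_\mu]$ equation) and verifying that, after the Virasoro reduction forces $\Delta=1$, there is no surviving freedom in $k$ that could produce an extra nontrivial extension parameter analogous to the $k_1\la$ term appearing in Theorem \ref{th5-1}(i). The key point to check carefully is that here $c_\gamma$ is a \emph{quotient} (not a sub), so the torsion term $a(\pa)$ links $f$ and $k$ rigidly through $\pa c_\gamma=\gamma c_\gamma+a(\pa)v_\Delta$, which is exactly what kills the would-be extra parameter and keeps $\mathrm{Ext}$ one-dimensional.
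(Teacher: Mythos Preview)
Your proposal is correct and follows essentially the same route as the paper's own proof: invoke Lemma~\ref{lem7} for sufficiency, reuse the Virasoro analysis from Theorem~\ref{theo2} to obtain $g=h=0$, $\Delta=1$, $\alpha+\gamma=0$, and $f(\partial,\lambda)=a(\partial)=a_0$, then apply \eqref{esv5} to $c_\gamma$ to get $(\partial+\lambda-\gamma)k(\partial,\lambda)=\beta a_0$, which forces $k=0$ and $\beta a_0=0$, hence $\beta=0$ when $a_0\neq0$. The paper's write-up is simply terser (it cites the proof of Theorem~\ref{theo2} in one line rather than re-running the dichotomy), and your worry about a stray $k_1\lambda$-type parameter is correctly resolved by the rigidity coming from the torsion equation, exactly as you note.
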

\begin{proof} By Lemma \ref{lem7}, we only need to prove the necessity. By the proof of Theorem \ref{theo2}, we have $g(\partial,\lambda)=h(\partial,\lambda)=0$, $\alpha+\gamma=0$, $\Delta=1$ and $f(\partial,\lambda)=a(\partial)=a_0\in \C$. Applying both sides of
\eqref{esv5} to $c_\gamma$ gives
\begin{eqnarray}
(\partial +\lambda-\gamma)k(\partial,\lambda)=\b a(\partial+\lambda).\label{esv11}
\end{eqnarray}
 It follows that $k(\partial,\lambda)=0$ and thus $\b=0$ (If $\b\neq0$, then $a_0=0$ and the corresponding extension is trivial). This completes the proof.
\end{proof}

Finally, we study extensions of conformal $\widetilde{\mathrm{SV}}$-modules of the form \eqref{estype3}.
As a $\C[\partial]$-module, $E_3$ in \eqref{estype3} is isomorphic to $ V(\bar\alpha,\bar\b,\bar\Delta)\bigoplus V(\alpha,\b,\Delta)$, in which $V(\bar\alpha,\bar\b,\bar\Delta)=\C[\partial]v_{\bar\Delta}$ is an $\widetilde{\mathrm{SV}}$-submodule and the action of $\widetilde{\mathrm{SV}}$ on $V(\alpha,\b,\Delta)=\C[\partial]v_{\Delta}$ is given \vspace*{-5pt}by
\begin{eqnarray}\label{estype3**}\begin{array}{lll}
&L_\lambda{}v_\Delta{}=(\partial{}+\alpha{}+\Delta{}\lambda{})v_\Delta{}+f(\partial{},\lambda{})v_{\bar\Delta{}},\ \ \ \ \ \ \ \ &
M_\lambda{}v_\Delta{}=g(\partial{},\lambda{})v_{\bar\Delta{}},\\[6pt]
&N_\lambda{}v_\Delta{}=\beta{}v_\Delta{}+k(\partial{},\lambda{})v_{\bar\Delta{}},\ &Y_\lambda{}v_\Delta{}=h(\partial{},\lambda{})v_{\bar\Delta{}},\
\end{array}
\end{eqnarray}
for some $f(\partial,\lambda), g(\partial, \lambda), h(\partial,\lambda), k(\partial, \lambda)\in\C[\pa,\la]$.

\begin{lemm}\label{lem8} All trivial extensions of the form \eqref{estype3} are given by \eqref{estype3**} with $g(\partial,\lambda)= h(\partial ,\lambda )=0$,
$f(\partial,\lambda)$ and $k(\partial, \lambda)$ of the form  $(\partial+\alpha+\Delta\lambda)\phi(\partial)-(\partial+\bar\alpha+\bar\Delta\lambda)\phi(\partial+\lambda)$, and $\b\phi(\partial)-\bar\b\phi(\partial+\lambda)$, respectively, where $\phi$ is a polynomial.
\end{lemm}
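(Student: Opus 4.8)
The plan is to characterize all extensions of the form \eqref{estype3} as cocycles and then extract the trivial ones. First I would recall that an extension is equivalent to the trivial one (the direct sum $V(\bar\alpha,\bar\b,\bar\Delta)\oplus V(\alpha,\beta,\Delta)$) precisely when the exact sequence \eqref{estype3} splits as $\widetilde{\mathrm{SV}}$-modules. So I would suppose \eqref{estype3**} represents a trivial cocycle, which means there exists a vector $v' = v_\Delta + \phi(\partial)v_{\bar\Delta}\in E_3$ with $\phi\in\C[\partial]$ such that the submodule $\C[\partial]v'$ is isomorphic to $V(\alpha,\beta,\Delta)$; that is, $L_\lambda v' = (\partial+\alpha+\Delta\lambda)v'$, $N_\lambda v' = \beta v'$, and $M_\lambda v' = Y_\lambda v' = 0$. (Any splitting vector can be normalized to have leading coefficient $1$ on $v_\Delta$, possibly after scaling; the scaling accounts for the phrase ``up to the same scalar''.)

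Next I would compute both sides. On one hand, using \eqref{estype3**} and $\C[\partial]$-sesquilinearity,
\begin{eqnarray*}
L_\lambda v' &=& (\partial+\alpha+\Delta\lambda)v_\Delta + f(\partial,\lambda)v_{\bar\Delta} + \phi(\partial+\lambda)(\partial+\bar\alpha+\bar\Delta\lambda)v_{\bar\Delta},\\
N_\lambda v' &=& \beta v_\Delta + k(\partial,\lambda)v_{\bar\Delta} + \phi(\partial+\lambda)\bar\beta v_{\bar\Delta},\\
M_\lambda v' &=& g(\partial,\lambda)v_{\bar\Delta}, \qquad Y_\lambda v' = h(\partial,\lambda)v_{\bar\Delta}.
\end{eqnarray*}
On the other hand, $L_\lambda v' = (\partial+\alpha+\Delta\lambda)v_\Delta + (\partial+\alpha+\Delta\lambda)\phi(\partial)v_{\bar\Delta}$ and $N_\lambda v' = \beta v_\Delta + \beta\phi(\partial)v_{\bar\Delta}$, while $M_\lambda v' = Y_\lambda v' = 0$. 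Comparing the $v_{\bar\Delta}$-components of these four identities yields exactly $g(\partial,\lambda) = h(\partial,\lambda) = 0$, $f(\partial,\lambda) = (\partial+\alpha+\Delta\lambda)\phi(\partial) - (\partial+\bar\alpha+\bar\Delta\lambda)\phi(\partial+\lambda)$, and $k(\partial,\lambda) = \beta\phi(\partial) - \bar\beta\phi(\partial+\lambda)$, which is the asserted form. Conversely, given $f$, $g$, $h$, $k$ of this shape, one checks directly that $v' = v_\Delta + \phi(\partial)v_{\bar\Delta}$ generates a complement isomorphic to $V(\alpha,\beta,\Delta)$, so the extension splits; this gives the reverse inclusion.

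There is essentially no serious obstacle here — the lemma is the $\widetilde{\mathrm{SV}}$-analogue of Lemma \ref{lem4}, with the one extra generator $N$ handled by the single additional identity for $N_\lambda v'$. The only point requiring a word of care is the normalization of the splitting vector: a priori a splitting could send $v_\Delta$ to $\psi(\partial)v_\Delta + \phi(\partial)v_{\bar\Delta}$ with $\psi$ a nonzero constant (it must be a constant, since comparing the $v_\Delta$-components of $L_\lambda v'$ forces $\psi(\partial+\lambda)(\partial+\alpha+\Delta\lambda) = (\partial+\alpha+\Delta\lambda)\psi(\partial)$, whence $\deg\psi = 0$), and dividing through by that constant is what produces the ``up to the same scalar'' statement. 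I would also remark that the computation is entirely parallel to the proofs of Lemmas \ref{lem2} and \ref{lem7}, so in the write-up it suffices to indicate the comparison of components and omit the routine verification, exactly as the paper does elsewhere.
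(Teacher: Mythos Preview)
Your proof is correct and follows essentially the same approach the paper uses (and details fully only for Lemma~\ref{lem1}): assume the sequence splits, write the splitting vector as $v'=v_\Delta+\phi(\partial)v_{\bar\Delta}$, act by $L_\lambda,N_\lambda,M_\lambda,Y_\lambda$, and compare $v_{\bar\Delta}$-components. One small remark: the phrase ``up to the same scalar'' does not actually appear in the statement of Lemma~\ref{lem8} (unlike Lemmas~\ref{lem2} and~\ref{lem7}); since $\phi$ is an arbitrary polynomial any scalar is already absorbed, and in fact a genuine splitting $s$ with $p\circ s=\mathrm{id}$ automatically forces the $v_\Delta$-coefficient to be $1$, so your normalization discussion, while correct, is not needed here.
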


Applying both sides of \eqref{esv1}, \eqref{esv2} and \eqref{esv4} to $v_\Delta$ gives the following functional equations:
\begin{eqnarray}
-\mu{}k(\partial{},\lambda{}+\mu{})&=&\beta f(\partial,\lambda)+(\partial{}+\bar \alpha+\bar \Delta\lambda)k(\partial{}+\lambda{},\mu{})\nonumber\\
&&-(\partial{}+\mu{}+\alpha{}+\Delta{}\lambda{})k(\partial{},\mu{})-\bar\beta{}f(\partial{}+\mu{},\lambda{})
 \label{esv14}\\
h(\partial{},\lambda{}+\mu{})&=&\bar\beta{}h(\partial{}+\lambda{},\mu)-\beta{}h(\partial{},\mu{}), \label{esv15}\\
0&=&\beta{}k(\partial{},\lambda{})+\bar\beta{}k(\partial{}+\lambda{},\mu{})-\beta{}k(\partial{},\mu{})-\bar\beta{}k(\partial{}+\mu{},\lambda{}).\label{esv16}
\end{eqnarray}
By \eqref{svy}, we have $g(\partial,\lambda)=0$. Putting $\lambda=0$ in \eqref{svl} and \eqref{svly} respectively gives that
\begin{eqnarray}
(\alpha-\bar\alpha)f(\partial,\mu)&=&(\partial+\Delta\mu+\alpha)f(\partial,0)-(\partial+\bar\Delta\mu+\bar\alpha)f(\partial+\mu,0),\label{3l1}\\
(\alpha-\bar\alpha)h(\partial,\mu)&=&0.\label{3lh1}
\end{eqnarray}

\begin{lemm}\label{lem9} If $\alpha\neq\bar\alpha$, then there are no nontrivial extensions of conformal $\widetilde{\mathrm{SV}}$-modules of the form \eqref{estype3}.
\end{lemm}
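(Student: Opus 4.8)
The plan is to reduce the problem to the already-solved functional equations for $f$, $h$, and $k$ when $\alpha\neq\bar\alpha$, and to show in every case the resulting cocycle is a coboundary of the form described in Lemma \ref{lem8}. First I would record the immediate consequences of $\alpha\neq\bar\alpha$ already displayed in the excerpt: equation \eqref{3lh1} forces $h(\partial,\lambda)=0$, and equation \eqref{3l1} expresses
\begin{equation*}
f(\partial,\mu)=\frac{1}{\alpha-\bar\alpha}\bigl((\partial+\Delta\mu+\alpha)f(\partial,0)-(\partial+\bar\Delta\mu+\bar\alpha)f(\partial+\mu,0)\bigr).
\end{equation*}
So $f$ is completely determined by the single-variable polynomial $f(\partial,0)$, and it automatically has exactly the shape $(\partial+\alpha+\Delta\lambda)\phi(\partial)-(\partial+\bar\alpha+\bar\Delta\lambda)\phi(\partial+\lambda)$ appearing in Lemma \ref{lem8}, once we identify $\phi$ with a suitable scalar multiple of $f(\partial,0)$ — namely $\phi(\partial)=\frac{1}{\alpha-\bar\alpha}f(\partial,0)$. (One should still check that this $f$ genuinely solves \eqref{svl1}; but since it arises as the $\lambda=0$ specialization forced by the equation and matches a known coboundary form, it will.)

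Next I would handle $k$. Setting $\lambda=0$ in \eqref{esv16} gives $(\beta-\bar\beta)\bigl(k(\partial,\mu)-k(\partial,0)\bigr)$-type relations; more usefully, setting $\lambda=0$ in \eqref{esv14} yields
\begin{equation*}
-\mu k(\partial,\mu)=\beta f(\partial,0)+(\partial+\bar\alpha)k(\partial,\mu)-(\partial+\mu+\alpha)k(\partial,0)-\bar\beta f(\partial+\mu,0),
\end{equation*}
which, combined with the already-known formula for $f$, lets me solve for $k(\partial,\mu)$ in terms of $k(\partial,0)$ and $f(\partial,0)$ (the coefficient $(\partial+\mu+\bar\alpha-\alpha)$ multiplying the unknown $k(\partial,\mu)$ is nonzero as a polynomial since $\alpha\neq\bar\alpha$, so division is legitimate). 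I expect the outcome, after simplification using the displayed formula for $f$, to be exactly $k(\partial,\lambda)=\beta\phi(\partial)-\bar\beta\phi(\partial+\lambda)$ with the same $\phi$ as above — this is the content I most want to verify carefully, and it is the step where I would actually grind the algebra. The remaining function is $g$, already zero by \eqref{svy}.

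Finally, having shown $g=h=0$, $f(\partial,\lambda)=(\partial+\alpha+\Delta\lambda)\phi(\partial)-(\partial+\bar\alpha+\bar\Delta\lambda)\phi(\partial+\lambda)$, and $k(\partial,\lambda)=\beta\phi(\partial)-\bar\beta\phi(\partial+\lambda)$ for one and the same polynomial $\phi$, I would invoke Lemma \ref{lem8} to conclude that the cocycle $(f,g,h,k)$ is trivial, hence the extension \eqref{estype3} is equivalent to the split one; therefore no nontrivial extension exists when $\alpha\neq\bar\alpha$. The main obstacle I anticipate is the consistency check in the $k$-computation: I must confirm that the value of $k(\partial,\mu)$ forced by the $\lambda=0$ slice of \eqref{esv14} actually satisfies the full equations \eqref{esv14} and \eqref{esv16} for all $\lambda,\mu$, and that the polynomial $\phi$ extracted from the $f$-equation is compatible with the one extracted from the $k$-equation; a priori these could clash, and ruling that out (or showing the clash forces $f=k=0$, which is still a coboundary with $\phi=0$) is the crux. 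Once that bookkeeping is done, the statement follows immediately.
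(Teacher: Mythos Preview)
Your overall strategy matches the paper's exactly: use \eqref{3lh1} to kill $h$, use \eqref{3l1} to put $f$ in coboundary form with $\phi(\partial)=\frac{1}{\alpha-\bar\alpha}f(\partial,0)$, then extract $k$ from \eqref{esv14}, and invoke Lemma~\ref{lem8}. However, your specialization of \eqref{esv14} at $\lambda=0$ contains an error that makes you think the computation is harder than it is. In \eqref{esv14} the third term is $-(\partial+\mu+\alpha+\Delta\lambda)k(\partial,\mu)$, so at $\lambda=0$ it becomes $-(\partial+\mu+\alpha)k(\partial,\mu)$, \emph{not} $-(\partial+\mu+\alpha)k(\partial,0)$. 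With the correct substitution the two $k$-terms combine as
\[
(\partial+\bar\alpha)k(\partial,\mu)-(\partial+\mu+\alpha)k(\partial,\mu)+\mu k(\partial,\mu)=(\bar\alpha-\alpha)k(\partial,\mu),
\]
so the coefficient of the unknown $k(\partial,\mu)$ is the nonzero \emph{constant} $\alpha-\bar\alpha$, and one reads off immediately
\[
k(\partial,\mu)=\frac{1}{\alpha-\bar\alpha}\bigl(\beta f(\partial,0)-\bar\beta f(\partial+\mu,0)\bigr)=\beta\phi(\partial)-\bar\beta\phi(\partial+\mu),
\]
with the \emph{same} $\phi$ as for $f$. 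There is no division by a non-constant polynomial and no separate $k(\partial,0)$ data to reconcile.

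Your closing worry about ``consistency checks'' is also misplaced. You are not proposing a candidate cocycle and verifying it; you are given an extension, hence a quadruple $(f,g,h,k)$ that already satisfies \emph{all} of \eqref{svl}--\eqref{svy} and \eqref{esv14}--\eqref{esv16}. The specializations merely determine what that quadruple must look like, and once it is seen to equal a coboundary of Lemma~\ref{lem8} you are done. No back-substitution into the full equations is required.
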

\begin{proof}
Obviously, \eqref{3lh1} gives $h(\partial,\mu)=0$. By \eqref{3l1}, we obtain
\begin{eqnarray}\label{14*}
f(\partial,\mu)=\frac1{\alpha-\bar\alpha}\big((\partial+\Delta\mu+\alpha)f(\partial,0)-(\partial+\bar\Delta\mu+\bar\alpha)f(\partial+\mu,0)\big).
\end{eqnarray}
By \eqref{esv14} with $\lambda=0$, we have
\begin{eqnarray}
k(\partial,\mu)=\frac{1}{\alpha-\bar\alpha}(\beta f(\partial,0)-\bar\b f(\partial+\mu,0).
\end{eqnarray}
Recall that $g(\partial,\lambda)=0$. Set $\phi(\partial)=f(\partial,0)$. By Lemma \ref{lem8}, the corresponding extension is trivial.
\end{proof}

In the following, we always assume that $\alpha=\bar\alpha$.
 As in Section 3, we employ a shift by $\alpha$, which amounts to putting $\bar\partial=\partial+\alpha$, $\bar f(\bar\partial,\lambda)=f(\bar\partial-\alpha,\lambda),$ $\bar h(\bar\partial,\lambda)=h(\bar\partial-\alpha,\lambda)$ and $\bar k(\bar\partial,\lambda)=k(\bar\partial-\alpha,\lambda)$.  For clarity, we will continue to write $\partial$ for $\bar\partial$, $f$ for $\bar f$, $h$ for $\bar h$ and $k$ for $\bar k$. In this case, \eqref{svl1}, \eqref{svlm1}, \eqref{esv15} and \eqref{esv16} still hold, whereas \eqref{esv14} is equivalent to
\begin{eqnarray}
-\mu k(\partial,\lambda+\mu)=\beta f(\partial,\lambda)+(\partial+\bar\Delta\lambda)k(\partial+\lambda,\mu)
-(\partial+\mu+\Delta\lambda)k(\partial,\mu)-\bar\beta f(\partial+\mu,\lambda).\label{esv14-1}
\end{eqnarray}

\begin{lemm}\label{lem10} If $\alpha=\bar\alpha$, $\beta\neq\bar\beta$ and $\bar\beta-\beta\neq1$, then there are no nontrivial extensions of  $\widetilde{\mathrm{SV}}$-modules of the form \eqref{estype3}.
\end{lemm}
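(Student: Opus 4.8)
The plan is to show that any cocycle of the shape \eqref{estype3**} is forced to be a coboundary, so that Lemma \ref{lem8} yields triviality. First I would dispose of $g$ and $h$: equation \eqref{svy} gives $g(\partial,\lambda)=0$ at once, and setting $\lambda=0$ in \eqref{esv15} yields $h(\partial,\mu)=(\bar\beta-\beta)h(\partial,\mu)$, i.e. $\bigl(1-(\bar\beta-\beta)\bigr)h(\partial,\mu)=0$; since $\bar\beta-\beta\neq1$ by hypothesis, $h\equiv0$. This is the only place the assumption $\bar\beta-\beta\neq1$ enters.

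Next I would read off the shape of $k$ from \eqref{esv16} alone. Putting $\mu=0$ in \eqref{esv16} gives $(\beta-\bar\beta)k(\partial,\lambda)=\beta k(\partial,0)-\bar\beta k(\partial+\lambda,0)$, and since $\beta\neq\bar\beta$ I may set $\phi(\partial)=(\beta-\bar\beta)^{-1}k(\partial,0)\in\C[\partial]$ and obtain $k(\partial,\lambda)=\beta\phi(\partial)-\bar\beta\phi(\partial+\lambda)$, which is precisely the $N$-component of the trivial cocycle attached to $\phi$ in Lemma \ref{lem8}.

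The main step is then to identify $f$ with the corresponding $L$-component $\tilde f(\partial,\lambda)=(\partial+\Delta\lambda)\phi(\partial)-(\partial+\bar\Delta\lambda)\phi(\partial+\lambda)$ of that same trivial cocycle (here $\partial$ denotes the variable after the shift by $\alpha$, as in Section 3). Since $(\tilde f,0,0,k)$ is the trivial cocycle attached to $\phi$, it satisfies \eqref{esv14-1}; subtracting that instance of \eqref{esv14-1} from the one satisfied by $(f,0,0,k)$, all $k$-terms cancel because the $k$'s coincide, leaving $\beta\,F(\partial,\lambda)=\bar\beta\,F(\partial+\mu,\lambda)$ with $F=f-\tilde f$. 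Specializing $\mu=0$ gives $(\beta-\bar\beta)F=0$, hence $F\equiv0$ and $f=\tilde f$. Thus $(f,g,h,k)$ is the coboundary attached to $\phi$, so by Lemma \ref{lem8} the extension \eqref{estype3} is trivial; undoing the shift by $\alpha$ returns the statement in the coordinates of \eqref{estype3**}.

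I expect the only genuinely delicate point to be the treatment of $f$: a priori \eqref{esv14-1} involves $f$ at two shifted arguments and so need not determine it, but once the explicitly known coboundary is peeled off, the residual equation collapses to the rigid identity $\beta F(\partial,\lambda)=\bar\beta F(\partial+\mu,\lambda)$, which $\beta\neq\bar\beta$ (via $\mu=0$) forces to vanish. Everything else is a short direct computation.
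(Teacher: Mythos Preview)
Your proposal is correct and follows essentially the same route as the paper. The paper handles $f$ more directly by setting $\mu=0$ in \eqref{esv14-1}, which (since $\beta\neq\bar\beta$) immediately solves $f(\partial,\lambda)=\dfrac{(\partial+\Delta\lambda)k(\partial,0)-(\partial+\bar\Delta\lambda)k(\partial+\lambda,0)}{\beta-\bar\beta}$, already the coboundary attached to $\phi=(\beta-\bar\beta)^{-1}k(\cdot,0)$; your subtract-then-specialize argument is the same computation in two steps, so the point you flagged as delicate is in fact straightforward.
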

\begin{proof} By \eqref{esv15} with $\lambda=0$, we obtain $h(\partial,\mu)=0$ since $\bar\beta-\beta\neq1$. Putting $\mu=0$ in \eqref{esv16} and \eqref{esv14-1} respectively, we have $k(\pa,\lambda)=\frac{\beta k(\pa,0)-\bar\beta k(\pa+\lambda,0)}{\beta-\bar\beta}$ and  $f(\partial,\lambda)=\frac{(\partial+\Delta\lambda)k(\partial,0)-(\partial+\bar\Delta\lambda) k(\partial+\lambda,0)}{\beta-\bar\beta}$. Recall that $g(\partial,\lambda)=0$. By Lemma \ref{lem8}, the corresponding extension is trivial.
\end{proof}

By Lemmas \ref{lem9} and \ref{lem10}, it is left to consider the following three
cases: (1) $\alpha=\bar\alpha$ and $\bar\beta-\beta=1$; (2) $\alpha=\bar\alpha$ and $\beta=\bar\beta=0$; (3) $\alpha=\bar\alpha$ and $\beta=\bar\beta\neq0$.

\begin{theo}\label{lemm-sv3} If $\alpha=\bar\alpha$ and $\bar\beta-\beta=1$, then nontrivial extensions of  $\widetilde{\mathrm{SV}}$-modules of the form \eqref{estype3} exist if and only if $\Delta-\bar\Delta=-\frac12$ or $(\Delta,\bar\Delta)=(\frac{\bar\b+1}2,\frac{\bar\b}2)$. In these cases, they are given (up to equivalence) by \eqref{estype3**}, where ($\bar\pa=\pa+\alpha$)
\begin{itemize}
\item[{\rm (i)}] $\Delta-\bar\Delta=-\frac12$, $f(\pa,\lambda)=g(\pa,\lambda)=k(\pa,\lambda)=0$ and $h(\pa,\lambda)=a_0$ with $a_0\neq 0$.
\item[{\rm (ii)}] $(\Delta,\bar\Delta)=(\frac{\bar\b+1}2,\frac{\bar\b}2)$, $f(\pa,\lambda)=g(\pa,\lambda)=k(\pa,\lambda)=0$ and $h(\pa,\lambda)=a_1(\bar\partial+\bar\beta\lambda)$ with $a_1\neq0$.
\end{itemize}
Furthermore, the space ${\rm Ext}_\C\big(V(\alpha,\beta,\Delta), V(\alpha,\beta+1,\bar\Delta)\big)$ is 1-dimensional in cases (i) and (ii).
\end{theo}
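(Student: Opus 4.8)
The plan is to proceed by the now-standard computation with the functional equations \eqref{svl1}, \eqref{svlm1}, \eqref{esv15}, \eqref{esv16} and \eqref{esv14-1} specialized to the hypothesis $\bar\beta-\beta=1$ (so $\bar\beta=\beta+1$). First I would note that by \eqref{svy} we already have $g(\partial,\lambda)=0$, so only $f$, $h$, $k$ remain. The key observation is that \eqref{svlm1} and \eqref{esv15} together severely constrain $f$ and $h$: by the argument already carried out before Lemma \ref{lemm5} (comparing coefficients of $\partial^m\lambda$), a nonzero solution $h$ of \eqref{svlm1} forces $\Delta-\bar\Delta\in\Z+\tfrac12$, whereas a nonzero solution $f$ of \eqref{svl1} forces $\Delta-\bar\Delta\in\Z$; hence we cannot have $f\neq0$ and $h\neq0$ simultaneously. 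So I would split into the case $h=0$ and the case $h\neq0$.

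In the case $h=0$: equation \eqref{esv15} is then automatically satisfied, and we are left with \eqref{svl1}, \eqref{esv16}, \eqref{esv14-1} for $f$ and $k$. I would first analyze \eqref{esv16} with $\bar\beta=\beta+1$; putting $\mu=0$ gives a relation expressing $k(\partial,\lambda)$ essentially in terms of $k(\partial,0)$ and $k(\partial+\lambda,0)$, and then \eqref{esv14-1} with $\mu=0$ expresses $f(\partial,\lambda)$ in terms of the same data. Setting $\phi(\partial)=$ (a suitable multiple of) $k(\partial,0)$, one checks via Lemma \ref{lem8} that the resulting cocycle is a coboundary, so this subcase yields only trivial extensions. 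Hence in the $h=0$ case there are no nontrivial extensions, consistent with the theorem statement (which lists only $h\neq0$ cases).

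In the case $h\neq0$: write $h(\partial,\lambda)=\sum_{i=0}^m a_i\partial^{m-i}\lambda^i$ homogeneous. Here the new ingredient compared to Lemma \ref{lemm5} is equation \eqref{esv15}, which with $\bar\beta=\beta+1$ reads $h(\partial,\lambda+\mu)=(\beta+1)h(\partial+\lambda,\mu)-\beta h(\partial,\mu)$; setting $\mu=0$ gives $h(\partial,\lambda)=(\beta+1)h(\partial+\lambda,0)-\beta h(\partial,0)=(\beta+1)(\partial+\lambda)^m a_0-\beta\partial^m a_0$ up to the leading normalization, which already pins down $h$ once $m$ is known. Combining this with \eqref{svlm1} (the Virasoro-type equation for $h$ that forced $m\in\{0,1,2\}$ and gave the list in Lemma \ref{lemm5}) and matching the two expressions for $h$ should force $m\in\{0,1\}$ together with the relation between $\Delta,\bar\Delta,\bar\beta$: for $m=0$ one gets $h=a_0$ constant, $\Delta-\bar\Delta=-\tfrac12$ (from \eqref{svlm1}), and \eqref{esv15} is then automatic; for $m=1$ one gets $h=a_1(\partial+c\lambda)$ and matching the coefficient from \eqref{esv15} forces $c=\bar\beta=\beta+1$, i.e. $h(\partial,\lambda)=a_1(\partial+\bar\beta\lambda)$, while \eqref{svlm1} forces $\Delta-\bar\Delta=\tfrac12$ and pins $2\bar\Delta=\bar\beta$, i.e. $(\Delta,\bar\Delta)=(\tfrac{\bar\beta+1}{2},\tfrac{\bar\beta}{2})$. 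The case $m=2$ of Lemma \ref{lemm5} should be eliminated because its $h=a_0(\partial^2-\lambda^2)$ cannot satisfy \eqref{esv15} for $\bar\beta=\beta+1$ unless a forbidden relation on $\beta$ holds; I would check this by substituting and comparing a single coefficient. Finally, since $h\neq0$ forces $\Delta-\bar\Delta\in\Z+\tfrac12$ hence $f=0$, equations \eqref{esv16}, \eqref{esv14-1} reduce (with $f=0$) to a homogeneous system for $k$ whose only solution turning out to be a coboundary (again via Lemma \ref{lem8}, with $\phi$ read off from $k(\partial,0)$), so we may take $k=0$; one then verifies that the remaining cocycle with $f=g=k=0$ and the stated $h$ is genuinely nontrivial by Lemma \ref{lem8}, and that ${\rm Ext}$ is $1$-dimensional in each case because $h$ is determined up to the single scalar $a_0$ (resp.\ $a_1$).

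The main obstacle I expect is the bookkeeping in the $h\neq0$ subcase: one must simultaneously use \eqref{svlm1} (which is the pure-Virasoro-type constraint already solved in Lemma \ref{lemm5}) and the extra constraint \eqref{esv15}, and these overlap but are not identical, so care is needed to see exactly which $(\Delta,\bar\Delta,\bar\beta)$ survive and to rule out $m=2$. A secondary subtlety is confirming, via \eqref{esv14-1} with $f=0$, that $k$ can always be gauged away — this requires checking that the relevant $\phi$ (namely $\phi(\partial)=k(\partial,0)$ up to scalar) really does reproduce $k$ through the coboundary formula of Lemma \ref{lem8}, using $\bar\beta-\beta=1$. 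Once those two points are settled, the classification and the $1$-dimensionality of ${\rm Ext}$ follow immediately.
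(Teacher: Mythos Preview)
Your proposal is correct and uses the same core ingredients as the paper (equations \eqref{svl1}, \eqref{svlm1}, \eqref{esv15}, \eqref{esv16}, \eqref{esv14-1} together with Lemma~\ref{lemm5} and Lemma~\ref{lem8}), but your organization is more roundabout than necessary. The paper does not split on whether $h=0$ or $h\neq0$. Instead it observes at once that setting $\mu=0$ in \eqref{esv16} and in \eqref{esv14-1}, together with $\bar\beta-\beta=1$, yields
\[
k(\partial,\lambda)=\bar\beta\,k(\partial+\lambda,0)-\beta\,k(\partial,0),\qquad
f(\partial,\lambda)=(\partial+\bar\Delta\lambda)k(\partial+\lambda,0)-(\partial+\Delta\lambda)k(\partial,0),
\]
which are precisely the coboundary formulas of Lemma~\ref{lem8} with $\phi(\partial)=-k(\partial,0)$. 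Thus $f$ and $k$ are gauged away \emph{simultaneously} by a single $\phi$, and one is immediately reduced to $f=g=k=0$; only $h$, constrained by \eqref{svlm1} and \eqref{esv15}, remains, and the proof finishes by checking which of the three polynomials of Lemma~\ref{lemm5} survive \eqref{esv15}.

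Your route reaches the same endpoint but pays two small prices. First, in the $h\neq0$ branch you write ``hence $f=0$'' when what the Virasoro theory actually gives (for $\Delta-\bar\Delta\notin\Z$) is that $f$ is a \emph{coboundary}; you then have to gauge $f$ to $0$, and this shifts $k$. Second, when you afterwards gauge $k$ to $0$ via $\phi=-k(\cdot,0)$, you must check that this second gauge does not disturb $f=0$; it does not, because \eqref{esv14-1} with $\mu=0$ and $f=0$ says exactly that the $f$-part of the coboundary with this $\phi$ vanishes --- but this verification is an extra step that the paper's uniform argument avoids. Your elimination of the $m=2$ case of Lemma~\ref{lemm5} via \eqref{esv15} (matching $\partial^2+2\bar\beta\partial\lambda+\bar\beta\lambda^2$ against $\partial^2-\lambda^2$ gives the incompatible pair $\bar\beta=0$, $\bar\beta=-1$) and your identification of the $m=0,1$ survivors are correct and agree with the paper.
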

\begin{proof} By Lemma \ref{lem8}, we only need to prove the necessity. Putting $\mu=0$ in \eqref{esv16} and \eqref{esv14-1} along with $\bar\beta-\beta=1$, we have $k(\pa,\lambda)=\bar\beta k(\pa+\lambda,0)-\beta k(\pa,0)$ and $f(\partial,\lambda)=(\partial+\bar\Delta\lambda) k(\partial+\lambda,0)-(\partial+\Delta\lambda)k(\partial,0)$. They are trivial cocycles by Lemma \ref{lem8}. Thus we can assume that $k(\pa,\lambda)=f(\pa,\lambda)=0$. Recall that $g(\partial,\lambda)=0$. It is left to calculate $h(\partial,\lambda)$, which is restricted by \eqref{svlm1} and \eqref{esv15}. Thus we may check if the nontrivial polynomials from Lemma \ref{lemm5} satisfy \eqref{esv15}. Finally, we conclude that
\begin{eqnarray}
h(\partial,\lambda)=\left\{
\begin{array}{ll}
a_0, &\Delta-\bar\Delta=-\frac12, \\
a_0(\partial+\bar\beta\lambda), &(\Delta,\bar\Delta)=(\frac{\bar\b+1}2,\frac{\bar\b}2),\\
0, & otherwise.
\end{array}
\right.
\end{eqnarray}
This completes the proof.
\end{proof}

 \begin{theo}\label{th5-3} If $\alpha=\bar\alpha$ and $\beta=\bar\beta=0$, then nontrivial extensions of  $\widetilde{\mathrm{SV}}$-modules of the form \eqref{estype3} exist only if $\Delta-\bar\Delta=0,1,2,3,4,5,6$. In these cases, they are given (up to equivalence) by \eqref{estype3**}, where $(\bar\pa=\pa+\alpha)$
\begin{itemize}
\item[{\rm (i)}] $\Delta=\bar\Delta$, $g(\pa,\lambda)=h(\pa,\lambda)=0$, $f(\pa,\lambda)=c_0+c_1\lambda$ and $k(\pa,\lambda)=a_0$ with $(c_0,c_1,a_0)\neq (0,0,0)$.
\item[{\rm (ii)}] $\Delta-\bar\Delta=1$, $g(\pa,\lambda)=h(\pa,\lambda)=0$, $f(\pa,\lambda)=0$ and $k(\pa,\lambda)=a_1\lambda$ with $a_1\neq0$.

 \item[{\rm (iii)}]  $\Delta-\bar\Delta=2$, $g(\pa,\lambda)=h(\pa,\lambda)=0$, $f(\pa,\lambda)=c_2\lambda^2(2\bar\partial+\lambda)$ and $k(\pa,\lambda)=a_2\la(\bar\pa-\bar\Delta\la)$ with $(c_2,a_2)\neq (0,0)$.
 \item[{\rm (iv)}] $(\Delta,\bar\Delta)=(1,-2)$, $g(\pa,\lambda)=h(\pa,\lambda)=0$,
 $f(\pa,\lambda)=c_3\bar\pa\lambda^2(\bar\partial+\lambda)$ and $k(\pa,\la)=a_3\la(\bar\pa^2+3\bar\pa\la+2\la^2)$ with $(c_3,a_3)\neq (0,0)$.
 \item[{\rm (iv')}] $\Delta-\bar\Delta=3$, $\bar\Delta\neq -2$,  $g(\pa,\lambda)=h(\pa,\lambda)=k(\pa,\la)=0$ and
 $f(\pa,\lambda)=c_3\bar\pa\lambda^2(\bar\partial+\lambda)$ with $c_3\neq 0$.
 \item[{\rm (v)}] $\Delta-\bar\Delta=4$, $g(\pa,\lambda)=h(\pa,\lambda)=k(\pa,\lambda)=0$ and $f(\pa,\la)=c_5 \la^2(4\bar\pa^3+6\bar\pa^2\la-\bar\pa\la^2+\bar\Delta\la^3)$ with $c_5 \neq 0$.
 \item [{\rm (vi)}]$(\Delta,\bar\Delta)=(1,-4),$ $g(\pa,\lambda)=h(\pa,\lambda)=k(\pa,\lambda)=0$ and  $f(\pa,\la)=c_6(\bar\pa^4\la^2-10\bar\pa^2\la^4-17\bar\pa\la^5-8\la^6)$ with $c_6 \neq 0$.
\item [{\rm (vii)}]$(\Delta,\bar\Delta)=(\frac72\pm\frac{\sqrt{19}}2, -\frac52\pm\frac{\sqrt{19}}2),$ $g(\pa,\lambda)=h(\pa,\lambda)=k(\pa,\lambda)=0$ and $f(\pa,\la)=c_7\big(\bar\pa^4\la^3-(2\bar\Delta+3)\bar\pa^3\la^4-3\bar\Delta\bar\pa^2\la^5
    -(3\bar\Delta+1)\bar\pa\la^6-(\bar\Delta+\frac9{28})\la^7\big)$ with $c_7 \neq 0$.
\end{itemize}
\end{theo}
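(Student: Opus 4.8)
The plan is to reduce the problem to the three functional equations \eqref{svl1}, \eqref{svlm1} (with $\beta=\bar\beta=0$) and \eqref{esv14-1}, and solve them simultaneously. First I would record that, by Lemma \ref{lem8}, only necessity needs proof, and that $g(\partial,\lambda)=0$ by \eqref{svy}. Specializing $\beta=\bar\beta=0$, equation \eqref{esv15} forces $h(\partial,\lambda+\mu)=0$, hence $h(\partial,\lambda)=0$; and \eqref{esv16} becomes vacuous. So the surviving constraints are: \eqref{svl1} for $f(\partial,\lambda)$ (the pure Virasoro extension equation, whose solutions are catalogued in Theorem \ref{th4}), together with \eqref{esv14-1} which with $\beta=\bar\beta=0$ reads
\begin{equation*}
-\mu k(\partial,\lambda+\mu)=(\partial+\bar\Delta\lambda)k(\partial+\lambda,\mu)-(\partial+\mu+\Delta\lambda)k(\partial,\mu).
\end{equation*}
Notice this last equation \emph{decouples} from $f$ entirely, and it is formally identical to \eqref{15-1}/\eqref{svlm1} with the coefficient $\tfrac12$ on the left replaced by $1$: that is, it is the equation governing a $Y$-type cocycle but now for the $N$-action. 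So I would first solve this $k$-equation in isolation exactly as in the treatment of \eqref{svlm1}: writing $k(\partial,\lambda)=\sum_{i=0}^m a_i\partial^{m-i}\lambda^i$ homogeneous, setting $\mu=0$ to get the analogue of \eqref{coeff}, comparing coefficients of $\partial^m\lambda$ to obtain $\Delta-\bar\Delta=m$ (an integer now, not a half-integer, consistent with the Virasoro constraint on $f$), and then — assuming $m\ge 3$ — deriving the analogues of \eqref{coe1} and \eqref{coe2} to force a contradiction, leaving only $m=0,1,2$. The case $m=0$ gives $\Delta=\bar\Delta$ with $k=a_0$ constant; $m=1$ gives $\Delta-\bar\Delta=1$ with $k=a_1(\partial+\bar\Delta\lambda)$ up to normalization (I would solve the two-parameter linear system from \eqref{coe} and check it against the full equation); $m=2$ gives $\Delta-\bar\Delta=2$ with $k=a_2(\partial^2+(1+2\bar\Delta)\partial\lambda+\ldots)$, which I would simplify to the form $a_2\lambda(\bar\partial-\bar\Delta\lambda)$ appearing in case (iii) after the shift $\bar\partial=\partial+\alpha$.

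Next I would assemble the joint solution. Since $f$ satisfies the Virasoro equation, Theorem \ref{th4} lists all its nonzero homogeneous solutions: $\Delta-\bar\Delta\in\{0,2,3,4,5,6\}$ (plus the two sporadic values in \eqref{th4}(v)--(vi)). Crossing this with the possible values $\Delta-\bar\Delta\in\{0,1,2\}$ coming from nonzero $k$, I get: when $\Delta=\bar\Delta$, both $f=c_0+c_1\lambda$ and $k=a_0$ can be nonzero independently (case (i), three free parameters); when $\Delta-\bar\Delta=1$, only $k$ can be nonzero and $f=0$ (case (ii)); when $\Delta-\bar\Delta=2$, both $f=c_2\lambda^2(2\bar\partial+\lambda)$ and $k=a_2\lambda(\bar\partial-\bar\Delta\lambda)$ can be nonzero (case (iii)); when $\Delta-\bar\Delta=3$, $k$ must vanish, but here I must separate the subcase $\bar\Delta=-2$: plugging $\bar\Delta=-2$ into the $m=2$ analysis shows $k$ with $\Delta-\bar\Delta=2$ degenerates, and the $\Delta-\bar\Delta=3$ Virasoro cocycle $f=c_3\bar\partial\lambda^2(\bar\partial+\lambda)$ should be re-examined together with a possible companion $k$ — this is how case (iv), $(\Delta,\bar\Delta)=(1,-2)$, acquires its extra $k=a_3\lambda(\bar\partial^2+3\bar\partial\lambda+2\lambda^2)$ term while case (iv$'$), $\bar\Delta\neq-2$, has $k=0$. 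The remaining cases $\Delta-\bar\Delta=4,5,6$ and the sporadic pair force $k=0$ and reduce verbatim to Theorem \ref{th4}(iv)--(vi), giving cases (v)--(vii).

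The main obstacle is the $\Delta-\bar\Delta=3$, $\bar\Delta=-2$ coincidence producing case (iv). The clean statement "nonzero $k$ forces $\Delta-\bar\Delta\le 2$" is true for the homogeneous $k$-equation alone, but when $\Delta-\bar\Delta=3$ one must check whether an \emph{inhomogeneous} coupling survives — i.e. whether there is a $k$ of degree making \eqref{esv14-1} consistent with a nonzero $f$ of the $\Delta-\bar\Delta=3$ type. The computation is: take $f=c_3\bar\partial\lambda^2(\bar\partial+\lambda)$ from Theorem \ref{th4}(iii), substitute into \eqref{esv14-1} (which with $\beta=\bar\beta=0$ actually does \emph{not} contain $f$ — so I must recheck this). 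In fact the cleaner route is that \eqref{esv14-1} with $\beta=\bar\beta=0$ genuinely decouples, so $k$ is an independent $m\le 2$ solution with $\Delta-\bar\Delta\le 2$; the apparent case (iv) at $\Delta-\bar\Delta=3$ must therefore come from the \emph{general} (non-homogeneous-in-$(\partial,\lambda)$) situation before we reduced to homogeneous pieces — i.e. $k$ of mixed degree whose top part is the $m=2$, $\bar\Delta=-2$ solution and whose behavior under \eqref{esv14-1} is compatible precisely when $(\Delta,\bar\Delta)=(1,-2)$. Pinning down this boundary case — verifying that $h(\partial,\lambda)=a_3\lambda(\bar\partial^2+3\bar\partial\lambda+2\lambda^2)$ really solves \eqref{esv14-1} at $(\Delta,\bar\Delta)=(1,-2)$ and that no analogous accident occurs at $\Delta-\bar\Delta\ge 4$ — will require the most careful bookkeeping; everything else is an application of Theorem \ref{th4} and a replay of the Lemma \ref{lemm5} argument with $\tfrac12\rightsquigarrow 1$.
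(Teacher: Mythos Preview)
Your overall strategy is right --- with $\beta=\bar\beta=0$ the equation \eqref{esv14-1} decouples from $f$, so one solves the $k$-equation in isolation and then superimposes with the Virasoro solutions for $f$ from Theorem~\ref{th4}. But your identification of the $k$-equation as ``\eqref{svlm1} with $\tfrac12\rightsquigarrow 1$'' is incorrect, and this error propagates through the rest of your argument. Equation \eqref{svlm1} has left-hand side $(\tfrac12\lambda-\mu)h(\partial,\lambda+\mu)$, whereas the $k$-equation has left-hand side $-\mu\,k(\partial,\lambda+\mu)$: the coefficient $\tfrac12$ should be replaced by $0$, not $1$. This changes the analysis in two concrete ways. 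First, setting $\mu=0$ now gives $0=(\partial+\bar\Delta\lambda)k(\partial+\lambda,0)-(\partial+\Delta\lambda)k(\partial,0)$ with \emph{zero} on the left, so you cannot read off a recursion for all the $a_i$ from this alone; instead, comparing the coefficient of $\lambda^{n+1}$ in the full equation (with $k$ homogeneous of degree $n$) forces $a_0=0$ (using $\bar\Delta\neq0$), so $k(\partial,\lambda)=\lambda\,\tilde k(\partial,\lambda)$. In particular your $m=1$ solution $k=a_1(\partial+\bar\Delta\lambda)$ is wrong: the correct degree-$1$ solution is $k=a_1\lambda$.

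Second, once $a_0=0$ is in hand and one sets $\mu=0$ in the reduced equation for $\tilde k$, comparison of coefficients gives $\Delta-\bar\Delta=n$ and the recursion $a_i=-a_1\binom{n-1}{i}-a_1\bar\Delta\binom{n-1}{i-1}$. Running the analogue of the \eqref{coe1}--\eqref{coe2} bound with these corrected numbers yields $n\le 3$, not $m\le 2$. The case $n=3$ then gives $k=a_1\lambda\bigl(\partial^2-(1+2\bar\Delta)\partial\lambda-\bar\Delta\lambda^2\bigr)$, and substituting back into the full equation forces $\bar\Delta=-2$, hence $(\Delta,\bar\Delta)=(1,-2)$ and $k=a_1\lambda(\bar\partial+\lambda)(\bar\partial+2\lambda)$. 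This \emph{is} case~(iv): it arises as a genuine homogeneous degree-$3$ solution of the decoupled $k$-equation, not from any inhomogeneous coupling or mixed-degree accident. Your last two paragraphs are chasing a phantom; once the $\tfrac12\rightsquigarrow 0$ correction is made, case~(iv) falls out cleanly and the absence of further ``accidents'' at $\Delta-\bar\Delta\ge 4$ is just the bound $n\le 3$.
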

\begin{proof} By \eqref{esv15}, we see $h(\partial,\mu)=0$ due to $\beta=\bar\beta$. Recall that $g(\partial,\lambda)=0$. It is left to determine $f(\partial,\lambda)$ and $k(\partial,\lambda)$, whose nonzero scalar multiples give rise to nontrivial extension.

Since $\beta=\bar\beta=0$, \eqref{esv14-1} reduces to
\begin{eqnarray}
-\mu k(\partial,\lambda+\mu)=(\partial+\bar\Delta\lambda)k(\partial+\lambda,\mu)
-(\partial+\mu+\Delta\lambda)k(\partial,\mu).\label{esv14-2}
\end{eqnarray}
By the nature of \eqref{esv14-2}, we may assume that a solution to \eqref{esv14-2} is a homogeneous polynomial in $\pa$ and $\la$ of degree $n$.  
Assume that $n\geqslant1$. Write $k(\pa,\la)=\sum_{i=0}^n a_i\pa^{n-i}\la^i$ with $ a_i\in\C$.
 Plugging this into \eqref{esv14-2} gives
\begin{eqnarray}\label{co1}
-\mu\sum^{n}_{i=0}a_i\partial^{n-i}(\lambda+\mu)^i=(\partial+\bar\Delta\lambda)\sum^{n}_{i=0}a_i(\partial+\lambda)^{n-i}\mu^i-(\partial+\mu+\Delta\lambda)\sum^{n}_{i=0}a_i\partial^{n-i}\mu^i.
\end{eqnarray}
Comparing the coefficients of $\la^{n+1}$ in \eqref{co1}, we get $a_0=0$ since $\bar\Delta\neq 0$. This implies that $k(\pa,\la)=\la \tilde k(\pa,\la)$, where $\tilde k(\pa,\la)=\sum_{i=1}^{n} {a_i}\pa^{n-i}\la^{i-1}$. Then \eqref{co1} amounts to
\begin{eqnarray}\label{co2}
(\lambda+\mu)\sum^{n}_{i=1}a_i\partial^{n-i}(\lambda+\mu)^{i-1}=(\partial+\mu+\Delta\lambda)\sum^{n}_{i=1}a_i\partial^{n-i}\mu^{i-1}-(\partial+\bar\Delta\lambda)\sum^{n}_{i=1}a_i(\partial+\lambda)^{n-i}\mu^{i-1}.
\end{eqnarray}
Setting $\mu=0$ in \eqref{co2}, we obtain
\begin{eqnarray}\label{co3}
\sum^{n}_{i=1}a_i\partial^{n-i}\lambda^{i}=(\partial+\Delta\lambda)a_1\partial^{n-1}-(\partial+\bar\Delta\lambda)a_1(\partial+\lambda)^{n-1}.
\end{eqnarray}
We see from \eqref{co3} that, if $a_1=0$, all $a_i=0$ and thus $k(\pa,\la)=0$. A contradiction. Thus $a_1\neq0$. Comparing the coefficients of $\partial^{n-1}\lambda$ in \eqref{co3}, we obtain $\Delta-\bar\Delta=n$. Equating the coefficients of $\partial^{n-i}\lambda^{i}$ in \eqref{co2} gives
\begin{eqnarray}\label{co4}
a_i=-a_1\binom{n-1}{i}-a_1\bar\Delta\binom{n-1}{i-1},\ \ 2\leqslant i \leqslant n.
\end{eqnarray}
Assume that $n\geqslant 4$. Comparing the coefficients of $\lambda^{n-1}\mu$ in \eqref{co2} gives
$n a_n=-\bar\Delta a_2$ and hence by \eqref{co4},
$n\bar\Delta=-\big(\mbox{$\binom{n-1}{2}$}+(n-1)\bar\Delta\big)\bar\Delta.$
Since $\bar\Delta\neq0$, we have
\begin{eqnarray}\label{co5}
\binom{n-1}{2}+(n-1)\bar\Delta+n=0.
\end{eqnarray}
Equating the coefficients of $\partial\lambda^{n-2}\mu$ in \eqref{co2} gives $(n-1)a_{n-1}=-(1+(n-2)\bar\Delta)a_2$. By \eqref{co4} again, 
\begin{eqnarray}\label{co6}
(n-1)(1+(n-1)\bar\Delta)=-\big(1+(n-2)\bar\Delta\big)\Big(\binom{n-1}{2}+(n-1)\bar\Delta\Big).
\end{eqnarray}
Combining \eqref{co5} with \eqref{co6} gives $\bar\Delta=1$. Then \eqref{co5} becomes $\mbox{$\binom{n-1}{2}$}+2n-1=0$, which certainly cannot happen, because $n$ would not be an integer. Therefore $n$ can be at most three, namely, $n=0,1,2, 3$.

We first consider the case $n=3$. By \eqref{co4}, we have $a_2=-(1+2\bar\Delta)a_1$ and $a_3=-a_1\bar\Delta$. Thus we may assume that $k(\pa,\la)=\pa^2\la-(1+2\bar\Delta)\pa\la^2-\bar\Delta\la^3$. Plugging this into \eqref{esv14-2} and combining with the fact that $\Delta-\bar\Delta=3$, we obtain (after simplification)
\begin{eqnarray}
(4\bar\Delta+2\bar\Delta^2)\la^2\mu^2=0.
\end{eqnarray}
It follows $\bar\Delta=-2$. Thus $\Delta=1$ and $k(\pa,\la)=a_1\la(\pa^2+3\pa\la+2\la^2)$ with $a_1\neq 0$.

Similarly, one can easily obtain the following ($a_0\neq 0$, $a_1\neq 0$):
\begin{itemize}
\item[{\rm (1)}]
For $n=0$, $k(\pa,\la)=a_0 $ and $\Delta=\bar\Delta$.
\item[{\rm (2)}]
For $n=1$, $k(\pa,\la)=a_1\la$ and $\Delta-\bar\Delta=1$.
\item[{\rm (3)}]
For $n=2$, $k(\pa,\la)=a_1\la(\pa-\bar\Delta\la)$ and $\Delta-\bar\Delta=2$.
\end{itemize}
Finally, note that the polynomial $f(\pa,\la)$ is completely determined by \eqref{svl1}, which is exactly the Virasoro case. Therefore, combining with Theorem \ref{th4}, we obtain the results.
\end{proof}

\begin{theo}\label{th5-4} If $\alpha=\bar\alpha$ and $\beta=\bar\beta\neq0$, then nontrivial extensions of $\widetilde{\mathrm{SV}}$-modules of the form \eqref{estype3} exist only if $\Delta-\bar\Delta=0,1,2$. In these cases, they are given (up to equivalence) by \eqref{estype3**}, where $(\bar\pa=\pa+\alpha)$
\begin{itemize}
\item[{\rm (i)}] $\Delta=\bar\Delta$, $g(\pa,\lambda)=h(\pa,\lambda)=0$, $k(\pa,\lambda)=a_0$ and $f(\pa,\lambda)=c_0+c_1\lambda$ with $(a_0, c_0,c_1)\neq (0,0,0)$.
\item [{\rm (ii)}] $\Delta-\bar\Delta=1$, $g(\pa,\lambda)=h(\pa,\lambda)=0$,
$k(\pa,\lambda)=b_2\la$ and $f(\pa,\lambda)=c_2\lambda^2$ with $(b_2, c_2)\neq (0,0)$.
\item [{\rm (iii)}] $\Delta-\bar\Delta=2$, $g(\pa,\lambda)=h(\pa,\lambda)=0$, $k(\pa,\la)=b_2\la^2$ and  $f(\pa,\la)=\frac{b_2}{\beta}\bar\pa\lambda^2+c_{3}\la^{3}$ with $(b_2, c_3)\neq (0,0)$.
\end{itemize}
 Moreover, the space ${\rm Ext}_\C\big(V(\alpha,\beta,\Delta), V(\alpha,\beta,\bar\Delta)\big)$ is 3-dimensional in case (i) and 2-dimensional in cases (ii) and (iii).
\end{theo}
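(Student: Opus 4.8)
\emph{Proof proposal.} The plan is to run the same machine as in the proofs of Theorems \ref{lemm-sv3} and \ref{th5-3}, now exploiting $\bar\beta=\beta\neq 0$. By Lemma \ref{lem8} and the functional equations already assembled, classifying the extensions \eqref{estype3} comes down to finding the quadruples $(f,g,h,k)$ solving \eqref{svl1}, \eqref{svlm1}, \eqref{esv14-1}, \eqref{esv15}, \eqref{esv16} (with $\bar\alpha=\alpha$, $\bar\beta=\beta$, and after the standard shift $\bar\partial=\partial+\alpha$), taken modulo the trivial cocycles of Lemma \ref{lem8}. First I would clear away $g$ and $h$: equation \eqref{svy} gives $g=0$, and putting $\lambda=0$ in \eqref{esv15} with $\beta=\bar\beta\neq 0$ gives $h=0$, so \eqref{svlm1} and \eqref{esv15} become vacuous. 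We are left with $f$ and $k$, constrained by the pure Virasoro equation \eqref{svl1} and jointly by \eqref{esv14-1} and the symmetry relation \eqref{esv16}. Passing to homogeneous components, one checks from \eqref{esv14-1} that a nonzero homogeneous $k$ of degree $n$ must be accompanied by a homogeneous piece of $f$ of degree $n+1$.

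Two observations then organize everything. First, \eqref{svl1} does not involve $k$, so $f$ must be a Virasoro $2$-cocycle for the pair $(\Delta,\bar\Delta)$; by Theorem \ref{th4} its nonzero classes live in a short, explicitly known list, confining $\Delta-\bar\Delta$ to a finite set. Second --- and this is the key point --- if $k=0$ then \eqref{esv14-1} collapses to $\beta\bigl(f(\partial,\lambda)-f(\partial+\mu,\lambda)\bigr)=0$, i.e. $f$ is independent of $\partial$. Since every Virasoro cocycle in the Theorem \ref{th4} list with $\Delta-\bar\Delta\geq 2$ genuinely involves $\bar\partial$, the $\partial$-independent contributions to $f$ can only be $c_0+c_1\lambda$ (a Virasoro cocycle exactly when $\Delta=\bar\Delta$), $c_2\lambda^2$ (exactly when $\Delta-\bar\Delta=1$) or $c_3\lambda^3$ (exactly when $\Delta-\bar\Delta=2$); these pair with $k=0$ in \eqref{esv14-1}, while any $\partial$-dependent part of $f$ forces a nonzero $k$.

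It remains to produce the companion $k$'s. For each admissible $\partial$-dependent Virasoro cocycle $f$ I would substitute into \eqref{esv14-1} and solve for a homogeneous $k$ of degree $\deg f-1$ by the coefficient-matching recursion already carried out for \eqref{esv14-2} in the proof of Theorem \ref{th5-3}; the homogeneous part of that recursion has a nonzero solution only in degrees $0,1,2$, and in degree $3$ only when $(\Delta,\bar\Delta)=(1,-2)$. This yields $k=b_2\lambda$ with $f=c_2\lambda^2$ for $\Delta-\bar\Delta=1$; $k=b_2\lambda^2$ with $f=\tfrac{b_2}{\beta}\bar\partial\lambda^2+c_3\lambda^3$ for $\Delta-\bar\Delta=2$ (after absorbing a coboundary to remove the $\bar\partial\lambda$-term of $k$); and the free constant $k=a_0$ alongside $f=c_0+c_1\lambda$ for $\Delta=\bar\Delta$. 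The main obstacle --- the step where $\beta\neq 0$ really bites, in contrast with Theorem \ref{th5-3} --- is the elimination of $\Delta-\bar\Delta=3,4,5,6$ and the exceptional pairs: there $k$ cannot vanish (the relevant $f$ depends on $\bar\partial$), the homogeneous recursion forces $k$ to equal exactly its coboundary value unless $(\Delta,\bar\Delta)=(1,-2)$, and in that last case the only nonzero candidate is the degree-three solution $k=a\lambda(\bar\partial+\lambda)(\bar\partial+2\lambda)$ of Theorem \ref{th5-3}, which one checks violates the symmetry condition \eqref{esv16} --- that $k(\partial,\lambda)+k(\partial+\lambda,\mu)$ be symmetric in $\lambda\leftrightarrow\mu$ --- already at $\partial=0$; hence no nontrivial extension survives. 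Finally, comparing the surviving solution spaces with the trivial cocycles of Lemma \ref{lem8}, for which $k=\beta\bigl(\phi(\partial)-\phi(\partial+\lambda)\bigr)$ can have neither a nonzero constant term nor a pure $\lambda^2$-term, reads off the dimensions of ${\rm Ext}$ asserted in (i)--(iii).
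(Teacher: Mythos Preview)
Your overall architecture differs from the paper's, and the difference matters precisely at the step you flag as the main obstacle. You organize the argument ``$f$ first, then $k$'': classify $f$ via the Virasoro cocycle equation \eqref{svl1} and Theorem~\ref{th4}, and then try to manufacture a matching $k$ from \eqref{esv14-1}, invoking \eqref{esv16} only at the end to kill the stray $(\Delta,\bar\Delta)=(1,-2)$ candidate. The paper proceeds in the opposite order: since $\beta=\bar\beta$, equation \eqref{esv16} becomes the standalone $1$-cocycle condition
\[
k(\partial,\lambda)+k(\partial+\lambda,\mu)=k(\partial,\mu)+k(\partial+\mu,\lambda),
\]
and a short differentiation argument shows that any homogeneous solution of degree $m\ge 1$ is $k=b_1\bigl((\partial+\lambda)^m-\partial^m\bigr)+b_2\lambda^m$, the $b_1$-piece being exactly the coboundary $\beta(\phi(\partial)-\phi(\partial+\lambda))$ with $\phi=\tfrac{b_1}{\beta}\partial^m$. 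So modulo coboundaries one may take $k=b_2\lambda^m$ for \emph{every} degree, not only for the few you treat by hand. Substituting this into \eqref{esv14-1} then determines $f_\partial$ explicitly, giving $f=\tfrac{b_2}{\beta}\bar\partial\lambda^m+c\lambda^{m+1}$ for $m\ge 2$, and back-substitution into the full equation forces $b_2=0$ whenever $m\ge 3$; finally \eqref{svl1} with $f=c\lambda^{m+1}$ and $m\ge 3$ gives $c=0$. This cleanly eliminates $\Delta-\bar\Delta\ge 3$ without ever touching Theorem~\ref{th4}.

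By contrast, your elimination of $\Delta-\bar\Delta=3,4,5,6$ rests on the sentence ``the homogeneous recursion forces $k$ to equal exactly its coboundary value,'' which is not established. For a given $\partial$-dependent Virasoro cocycle $f$ from Theorem~\ref{th4}, the inhomogeneous equation \eqref{esv14-1} may or may not admit a solution $k$ of the right degree; you would then still need to check that any such $k$ violates \eqref{esv16} (or is a coboundary together with $f$). You carry this out only for $(1,-2)$, but the same case-by-case work would be needed for each of the degree $4,5,6,7$ Virasoro cocycles, and the computations are considerably heavier than the paper's route. The fix is simple: promote \eqref{esv16} from an afterthought to the starting point, exactly as you do for the $(1,-2)$ case, and use it uniformly to reduce $k$ to $b_2\lambda^m$ before looking at $f$.
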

\begin{proof} By \eqref{esv15} with $\lambda=0$, we have $h(\partial,\mu)=0$. Recall that $g(\partial,\lambda)=0$. It is left to determine $f(\partial,\lambda)$ and $k(\partial,\lambda)$, whose nonzero scalar multiples give rise to nontrivial extension. Since $\beta=\bar\beta\neq 0$, \eqref{esv16} and \eqref{esv14-1} reduce to
\begin{eqnarray}\label{esv16-1}
0&=&k(\partial{},\lambda{})+k(\partial{}+\lambda{},\mu{})-k(\partial{},\mu{})-k(\partial{}+\mu{},\lambda{}),
\\ \beta \big(f(\partial+\mu,\lambda)-f(\partial,\lambda)\big)&=&\mu k(\partial,\lambda+\mu) +(\partial+\bar\Delta\lambda)k(\partial+\lambda,\mu)
-(\partial+\mu+\Delta\lambda)k(\partial,\mu).\label{esv14-2-1}
\end{eqnarray}
 We can write $k(\pa,\la)=\sum_{i=0}^m a_i\pa^{m-i}\la^i,$ $f(\pa,\la)=\sum_{i=0}^n c_i\pa^{n-i}\la^i,$ where $a_i, c_i\in\C$.

 If $m=0$, then $k(\pa,\la)=a_0$. By \eqref{esv14-2-1}, we have
 \begin{eqnarray}
\beta \big(f(\partial+\mu,\lambda)-f(\partial,\lambda)\big)=a_0(\bar\Delta-\Delta)\lambda,
\label{esv14-3}
\end{eqnarray}
 which implies that ${\rm{deg}} (f) \leqslant 1$ and $f(\pa,0)=0$ when ${\rm{deg}} (f) =1$. Thus $f(\pa,\la)=c_1\la+c_0$. Substituting this into \eqref{svl1} and \eqref{esv14-3}, we obtain $a_0(\bar\Delta-\Delta)=c_0(\bar\Delta-\Delta)=0$. We claim that  $\Delta=\bar\Delta$. If not, $a_0=c_0=0$, namely, $k(\pa,\la)=0$ and $f(\pa,\la)=c_1\la$. By Lemma \ref{lem8}, the corresponding extension is trivial. Therefore, $k(\pa,\la)=a_0$ and $f(\pa,\la)=c_1\la+c_0$ as required. The corresponding extension is nontrivial unless $(a_0,c_0,c_1)=(0,0,0)$.

 In the following we assume that $m\geqslant 1$. Substituting $k(\pa,\la)=\sum_{i=0}^m a_i\pa^{m-i}\la^i$ into \eqref{esv16-1} with $\mu=0$, we have $a_0=0$. Differentiating \eqref{esv16-1} with
respect to $\la$, we obtain
\begin{eqnarray}\label{esv16-2}
0=k_{\la}(\partial{},\lambda{})+k_{\pa}(\partial{}+\lambda{},\mu{})-k_{\la}(\partial{}+\mu{},\lambda{}),
\end{eqnarray}
where $k_{\la}$ and $k_{\pa}$ above denote the partial derivatives of $k(\pa,\la)$ with respect to $\la$ and $\pa$ respectively. Now we put $\la=0$ in \eqref{esv16-2} and get
\begin{eqnarray}\label{esv16-3}
k_{\pa}(\partial{},\mu{})=k_{\la}(\partial{}+\mu{},0)-k_{\la}(\partial{},0)=a_1\big((\pa+\mu)^{m-1}-\pa^{m-1}\big),
\end{eqnarray}
which amounts to $$\sum\limits_{i=1}^{m-1} a_i(m-i)\pa^{m-i-1}\mu^i=a_1\sum\limits_{i=1}^{m-1} \binom{m-1}{i}\pa^{m-i-1}\mu^i.$$
Equating the coefficients of $\pa^{m-i-1}\mu^i$ gives $a_i=\mbox{$\binom{m-1}{i}$}\frac{a_1}{m-i}=\mbox{$\binom{m}{i}$}\frac{a_1}{m}$ for $i=1,\cdots, m-1$. Thus,
\begin{eqnarray}\label{esv16-4}
k(\pa,\la)&=&\frac{a_1}{m}\sum_{i=1}^{m-1} \binom{m}{i}\pa^{m-i}\la^i+a_m\la^m\nonumber\\&=&\frac{a_1}{m}\big((\pa+\la)^m-\pa^m-\la^m\big)+a_m\la^m\nonumber\\
&=&b_1\big((\pa+\la)^m-\pa^m\big)+b_2 \la^m,
\end{eqnarray}
where $b_1=\frac{a_1}{m}$ and $b_2=a_m-\frac{a_1}{m}$. Set $\phi(\partial)=\frac{\pa^m}{\b}$, $k_1(\pa,\la)=(\pa+\la)^m-\pa^m$. Then $k_1(\pa,\la)=\b(\phi(\pa+\la)-\phi(\pa)$, which is a trivial cocycle by Lemma
\ref{lem8}. Thus we may assume that $b_1=0$ in \eqref{esv16-4}. Now we have  $k(\pa,\la)=b_2\la^m$. Substituting this into \eqref{esv14-2-1}, we obtain
\begin{eqnarray}
\beta \big(f(\partial+\mu,\lambda)-f(\partial,\lambda)\big)=b_2\mu (\lambda+\mu)^m +b_2(\partial+\bar\Delta\lambda)\mu^m
-b_2(\partial+\mu+\Delta\lambda)\mu^m.\label{esv14-2-2}
\end{eqnarray}
We see from \eqref{esv14-2-2} that ${\rm{deg}} (f)=m+1$. By \cite[Lemmas 3.1 and 3.2]{CKW1}, we must have $\Delta-\bar\Delta=m$ (if not, we may assume $f(\pa,\la)=0$, then $b_2=0$ by \eqref{esv14-2-2}. Thus $k(\pa,\la)=0$ and the corresponding extension is trivial.) Differentiating \eqref{esv14-2-2} with
respect to $\mu$, we obtain
\begin{eqnarray*}
\beta f_{\pa}(\partial+\mu,\lambda)=b_2\big((\lambda+\mu)^m+m \mu (\lambda+\mu)^{m-1} +m (\partial+\bar\Delta\lambda)\mu^{m-1}-\mu^m
-m(\partial+\mu+\Delta\lambda)\mu^{m-1}\big).
\end{eqnarray*}
Taking $\mu=0$ gives
\begin{eqnarray}
f_{\pa}(\partial,\lambda)=\left\{
\begin{array}{ll}
0, & m=1,\\
\frac{b_2}{\beta}\lambda^m, & m\geqslant 2.
\end{array}
\right.
\end{eqnarray}
Thus (recall that $f(\pa,\la)$ is assumed to be homogenous in $\pa$ and $\la$ of degree $m+1$), we have
\begin{eqnarray}\label{ii}
f(\partial,\lambda)=\left\{
\begin{array}{ll}
c_2\lambda^2, & m=1,\\
\frac{b_2}{\beta}\pa\lambda^m+c_{3}\la^{m+1}, & m\geqslant 2.
\end{array}
\right.
\end{eqnarray}

Assume that $m=1$. By the discussions above, we have $\Delta-\bar\Delta=1$, $k(\pa,\la)=b_2 \la$ and $f(\pa,\la)=c_2\la^2$.
The corresponding extension is nontrivial in the case $(b_2,c_2)\neq (0,0)$.

Now assume that $m=2$. In this case, $\Delta-\bar\Delta=2$,  $k(\pa,\la)=b_2\la^2$ and  $f(\pa,\la)=\frac{b_2}{\beta}\pa\lambda^2+c_{3}\la^{3}$. One can check that they are unique (up to the same scalar) solutions to \eqref{svl1}, \eqref{esv16-1} and \eqref{esv14-2}. And it is easy to see that the corresponding extension is nontrivial in the case $(b_2,c_3)\neq (0,0)$.

Finally consider the case $m\geqslant3$.
By \eqref{ii}, $f(\pa,\la)=\frac{b_2}{\beta}\pa\lambda^m+c_{3}\la^{m+1}$. Plugging this into \eqref{esv14-2-2}, we obtain (after simplification)
\begin{eqnarray}
b_2\la^m=b_2(\la+\mu)^m+b_2\bar\Delta\la\mu^{m-1}-b_2\mu^m-b_2\Delta\la\mu^{m-1}.
\end{eqnarray}
Comparing the coefficients of $\la^2\mu^{m-2}$ in the equation above, we get $\mbox{$\binom{m}{2}$}b_2=0$ and thus $b_2=0$. Then $k(\pa,\la)=0$ and $f(\pa,\la)=c_3\la^{m+1}$. Substituting this into \eqref{svl1} and then comparing the coefficients of $\la^2\mu^{m}$, we obtain
$\Big((m+1)-\mbox{$\binom{m+1}{2}$}\Big)c_3=0$, leading to $c_3=0$. Thus $f(\pa,\la)=0$, and the extension is trivial. This completes the proof.
\end{proof}

\section {Applications to the Heisenberg-Virasoro conformal algebra}

 The Heisenberg-Virasoro conformal algebra, denoted by $\mathrm{HV}$, was introduced in \cite{SY1} as a subalgebra of the
extended Schr\"odinger-Virasoro Lie conformal algebra. Recall that $\mathrm{HV}=\C[\pa]L\bigoplus \C[\pa]N$
with the $\lambda$-brackets defined by \eqref{elie}. It was shown in \cite{WY} that every finite irreducible conformal $\mathrm{HV}$-module is either $V(\alpha,\beta,\Delta)=\mathbb{C}[\partial]v_\Delta$, with actions defined by
\begin{eqnarray}\label{key-key111}
L_\lambda
v_\Delta=(\partial+\alpha+\Delta \lambda)v_\Delta, \ \ \ N_\lambda v_\Delta=\beta v_\Delta,
\end{eqnarray}
where $\a,\b,\Delta\in\C$ with $(\Delta,\b)\neq (0,0)$, or else it is the one-dimensional module $\C c_\gamma$ with $L_\la c_\gamma=N_\la c_\gamma=0$, and  $\pa c_\gamma=\gamma c_\gamma$, $\gamma\in\C.$
Comparing \eqref{key-key111} with \eqref{key-key}, we see that a finite nontrivial irreducible
conformal $\widetilde{\mathrm{SV}}$-module is just an irreducible
conformal $\mathrm{HV}$-module with trivial actions of $M$ and $Y.$ This allows us to use the classification of extensions of the extended Schr\"odinger-Virasoro conformal modules obtained in Section 5 to solve the extension problem for the Heisenberg-Virasoro conformal algebra.

Consider the following three types of extensions between two finite irreducible conformal modules over the Heisenberg-Virasoro conformal algebra:
\begin{eqnarray}
&&0\longrightarrow \C{c_\gamma}\longrightarrow E_1 \longrightarrow V(\alpha,\b,\Delta) \longrightarrow 0,\label{hvtype1}\\
&&0\longrightarrow V(\alpha,\b,\Delta)\longrightarrow E_2 \longrightarrow \C c_\gamma \longrightarrow 0,\label{hvtype2}\\
&&0\longrightarrow V(\bar\alpha,\bar\beta,\bar\Delta)\longrightarrow E_3 \longrightarrow V(\alpha,\beta,\Delta) \longrightarrow 0, \label{hvtype3}
\end{eqnarray}
where $(\Delta,\b)\neq(0,0)$ and $(\bar\Delta,\bar\b)\neq(0,0)$.

As a $\C[\partial]$-module, $E_1$ in \eqref{hvtype1} is isomorphic to $\C {c_\gamma}\bigoplus V(\alpha,\b,\Delta)$, where $\C {c_\gamma}$ is an ${\mathrm{HV}}$-submodule and $V(\alpha,\b,\Delta)=\C[\partial]v_\Delta$ with
\begin{eqnarray}\label{hv1}
L_\lambda v_\Delta=(\partial{}+\alpha+\Delta\lambda)v_\Delta+f(\lambda)c_\gamma,\ \ N_\lambda v_\Delta=\b v_\Delta+k(\lambda)c_\gamma,  \ \ f(\lambda),\ k(\lambda)\in\C[\la].
\end{eqnarray}

By Lemma \ref{lem6} and  Theorem \ref{th5-1}, we have

\begin{coro}\label{coro1} Nontrivial extensions of ${\mathrm{HV}}$-modules of the form \eqref{hvtype1} exist if and only if $\alpha+\gamma=0,$ $\beta=0$ and $\Delta=1$ or $2$. They are given, up to equivalence, by \eqref{hv1}, where
\begin{itemize}
\item[{\rm (i)}] $k(\lambda)=k_1\lambda$, $f(\lambda)=f_2\lambda^2$, for $\Delta=1$ and $(k_1,f_2)\neq (0,0)$.
\item[{\rm (ii)}]$k(\lambda)=0$, $f(\lambda)=f_3\lambda^3$, for $\Delta=2$ and $f_3\neq0$.
\end{itemize}
Furthermore, all trivial cocycles are given by the same scalar multiples of the polynomials $f(\lambda)=\alpha+\gamma+\Delta\lambda$ and $k(\lambda)=\b$.
\end{coro}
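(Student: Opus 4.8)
The plan is to obtain Corollary \ref{coro1} as a direct specialization of the corresponding result for $\widetilde{\mathrm{SV}}$-modules, namely Theorem \ref{th5-1} together with Lemma \ref{lem6}, using the observation made in the text that a finite nontrivial irreducible conformal $\mathrm{HV}$-module is precisely a finite nontrivial irreducible conformal $\widetilde{\mathrm{SV}}$-module with trivial actions of $M$ and $Y$. Concretely, an extension $E_1$ of the form \eqref{hvtype1} as $\mathrm{HV}$-modules, together with the choice $M_\lambda v_\Delta = Y_\lambda v_\Delta = 0$, gives an extension of the form \eqref{estype1} as $\widetilde{\mathrm{SV}}$-modules (one must check this is consistent, i.e. that the brackets \eqref{esv1}--\eqref{esv5} and \eqref{sv1}--\eqref{sv9} involving $M$ and $Y$ are automatically satisfied when $M$ and $Y$ act as zero on $E_1$, which is immediate since $\C c_\gamma$ is killed by everything and $v_\Delta$ maps into $\C c_\gamma$). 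Conversely, the $\mathrm{HV}$-module structure on $E_1$ is obtained by simply restricting the $\widetilde{\mathrm{SV}}$-action to the subalgebra generated by $L$ and $N$.

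The key steps, in order, would be: first, record the dictionary between \eqref{hv1} and \eqref{es1} — they agree once we set $g(\lambda) = h(\lambda) = 0$; second, invoke Lemma \ref{lem6} to read off the trivial extensions, which in the $g = h = 0$ situation become exactly $f(\lambda) = a(\alpha+\gamma+\Delta\lambda)$ and $k(\lambda) = a\beta$, giving the stated description of the trivial cocycles; third, invoke Theorem \ref{th5-1} and simply discard the case $(\Delta,\beta) = (-\tfrac12,-1)$ from its list, since that case forces $h(\lambda) = b_0 \neq 0$, i.e. a nonzero action of $Y$, which has no counterpart in $\mathrm{HV}$. What remains from Theorem \ref{th5-1} are precisely the cases $(\Delta,\beta) = (1,0)$ with $g = h = 0$, $k(\lambda) = k_1\lambda$, $f(\lambda) = a_2\lambda^2$, $(k_1,a_2)\neq(0,0)$, and $(\Delta,\beta) = (2,0)$ with $g = h = k = 0$, $f(\lambda) = a_3\lambda^3$, $a_3 \neq 0$; renaming $a_2 \mapsto f_2$, $a_3 \mapsto f_3$ yields exactly (i) and (ii). In both surviving cases $\alpha + \gamma = 0$ and $\beta = 0$, matching the stated hypotheses.

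The only genuine point requiring care — and the step I would expect to be the main (minor) obstacle — is justifying the equivalence of categories of extensions at the level needed here: that there is no loss in passing back and forth, i.e. that every $\mathrm{HV}$-extension of the form \eqref{hvtype1} really does extend (uniquely, compatibly with the cocycle/coboundary structure) to an $\widetilde{\mathrm{SV}}$-extension with $M$ and $Y$ acting trivially, so that $\mathrm{Ext}_{\mathrm{HV}}$ and $\mathrm{Ext}_{\widetilde{\mathrm{SV}}}$ (restricted to such modules) coincide. This is essentially forced: since $\C c_\gamma$ is a trivial module and $V(\alpha,\beta,\Delta)$ has $M,Y$ acting as zero, any extension $E_1$ as in \eqref{hvtype1} has, for dimension reasons, $M_\lambda$ and $Y_\lambda$ necessarily mapping $v_\Delta$ into $\C[\lambda]c_\gamma$ and killing $c_\gamma$; the $\widetilde{\mathrm{SV}}$-cocycle conditions on $g,h$ are then exactly \eqref{esv7}, \eqref{s3} (which force $g=0$) and the $Y$-relations, and choosing $g = h = 0$ is the canonical lift. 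After this bookkeeping the corollary is immediate, so I would keep the proof to a single short paragraph citing Lemma \ref{lem6} and Theorem \ref{th5-1} and pointing out the removal of the $(-\tfrac12,-1)$ case.
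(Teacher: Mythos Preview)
Your proposal is correct and matches the paper's approach exactly: the paper states the corollary with only the one-line justification ``By Lemma \ref{lem6} and Theorem \ref{th5-1}, we have'', which is precisely the specialization-and-discard argument you outline. Your extra care in verifying that the lift $g=h=0$ is automatically consistent and that coboundaries match on both sides is the correct bookkeeping behind that citation.
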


As a vector space, $E_2$ in \eqref{hvtype2} is isomorphic to $V(\alpha,\b,\Delta)\oplus\C {c_\gamma}$. Here $V(\alpha,\b,\Delta)=\C[\partial]v_\Delta$ is an ${\mathrm{HV}}$-submodule and we have
\begin{eqnarray}\label{hv2}
L_\lambda c_\gamma=f(\partial,\lambda)v_\Delta,\ N_\lambda c_\gamma=k(\partial,\lambda)v_\Delta{},\ \,
\partial{}c_\gamma=\gamma{}c_\gamma{}+a(\partial{})v_\Delta,
\end{eqnarray}
where $f(\partial,\lambda),\,k(\partial,\lambda)\in\C[\partial,\lambda]$ and $a(\partial)\in\C[\partial]$.

By Lemma \ref{lem7} and Theorem \ref{th5-2}, we have

\begin{coro}\label{coro2}
Nontrivial extensions of $\mathrm{HV}$-modules of the form \eqref{hvtype2} exist if and only if $\alpha+\gamma=0$, $\beta=0$ and $\Delta=1$. The unique nontrivial extension is given, up to equivalence, by \eqref{hv2}
with $k(\partial,\lambda)=0$, $f(\partial,\lambda)=a(\partial)=a_0,$ $a_0\in \C^*$. Furthermore, all trivial extensions correspond to the triples of the form $f(\partial,\lambda)= (\alpha+\gamma+\Delta\lambda)\phi(\partial+\lambda)$, $k(\partial,\lambda)=\b \phi(\partial+\lambda)$, and $a(\partial)= (\partial-\gamma)\phi(\partial)$, where $\phi$ is a polynomial.
\end{coro}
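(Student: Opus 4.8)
The plan is to derive Corollary \ref{coro2} from Theorem \ref{th5-2} and Lemma \ref{lem7} by means of the observation, recorded just before \eqref{hvtype1}, that a finite nontrivial irreducible conformal $\widetilde{\mathrm{SV}}$-module is nothing but an irreducible conformal $\mathrm{HV}$-module on which $M$ and $Y$ act trivially. Concretely, starting from an extension $E_2$ of $\mathrm{HV}$-modules of the form \eqref{hvtype2}, with the action on $c_\gamma$ given by \eqref{hv2}, I would promote $E_2$ to a conformal $\widetilde{\mathrm{SV}}$-module by declaring $M_\lambda=Y_\lambda=0$ on all of $E_2$. A routine inspection of the $\lambda$-brackets \eqref{sv1}--\eqref{sv9} and \eqref{esv1}--\eqref{esv5} confirms this is consistent: every relation involving $M$ or $Y$ has both sides equal to $0$, while the relations among $L,N,\partial$ are exactly the $\mathrm{HV}$-module relations attached to \eqref{elie} that $E_2$ already obeys. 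The result is an extension of $\widetilde{\mathrm{SV}}$-modules of the form \eqref{estype2} carrying the same polynomial data $f(\partial,\lambda), k(\partial,\lambda), a(\partial)$ and with $g=h=0$.

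Next I would record the converse and its compatibility with equivalence. By the proof of Theorem \ref{th5-2}, every extension of the form \eqref{estype2} automatically satisfies $g=h=0$, so each one is obtained by the promotion above from its own restriction to $\mathrm{HV}$; and any equivalence of extensions (in particular any $\C[\partial]$-linear splitting) is automatically compatible with $M_\lambda$ and $Y_\lambda$ once these operators are $0$. Hence restriction and promotion are mutually inverse and set up a bijection, respecting triviality, between extensions of $\mathrm{HV}$-modules of type \eqref{hvtype2} and of $\widetilde{\mathrm{SV}}$-modules of type \eqref{estype2}; in particular ${\rm Ext}(\C c_\gamma, V(\alpha,\beta,\Delta))$ is the same whether computed over $\mathrm{HV}$ or over $\widetilde{\mathrm{SV}}$. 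Theorem \ref{th5-2} then gives at once that nontrivial extensions exist precisely for $\alpha+\gamma=0$, $\beta=0$, $\Delta=1$, with the unique (up to scalar) nontrivial cocycle $f(\partial,\lambda)=a(\partial)=a_0\in\C^*$ and $k(\partial,\lambda)=0$, while Lemma \ref{lem7} read with $g=h=0$ supplies the description of the trivial extensions.

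The only point that calls for real care — and the step I expect to be the (admittedly minor) obstacle — is this dictionary: one must verify that extending by $M=Y=0$ truly respects all of the $\widetilde{\mathrm{SV}}$ relations rather than merely the $\mathrm{HV}$ ones, and that the resulting assignment descends to a triviality-preserving bijection of equivalence classes. Once that is established no fresh computation is needed, because the functional equations obtained by applying the $\mathrm{HV}$ module relations to $c_\gamma$ form a subsystem of the system already solved in proving Theorem \ref{th5-2} (for $\mathrm{HV}$ the relevant relations are those coming from $[L_\lambda,L_\mu]$, $[\partial,L_\lambda]$, $[L_\lambda,N_\mu]$, $[\partial,N_\lambda]$ and the vacuous $[N_\lambda,N_\mu]$, which produce \eqref{s5}, \eqref{s6}, \eqref{esv11} together with an $[L_\lambda,N_\mu]$-equation that the solution found automatically satisfies). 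A completely self-contained alternative is to rederive and solve these equations directly for $\mathrm{HV}$ exactly as in the proof of Theorem \ref{th5-2}, avoiding the embedding at the cost of repeating the argument.
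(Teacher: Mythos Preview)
Your proposal is correct and follows the same route as the paper, which simply records ``By Lemma \ref{lem7} and Theorem \ref{th5-2}'' without further comment. You supply exactly the justification the paper leaves implicit, namely the promotion/restriction dictionary between $\mathrm{HV}$-extensions and $\widetilde{\mathrm{SV}}$-extensions with $M=Y=0$, and your observation that $g=h=0$ holds automatically in any extension of type \eqref{estype2} (from \eqref{s7}--\eqref{s8}) is precisely what makes the bijection triviality-preserving.
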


Let $E_3$ be an extension of the form \eqref{hvtype3}. As a $\C[\partial]$-module, $E_3$ in \eqref{hvtype3} is isomorphic to $ \C[\partial]v_{\bar\Delta}\bigoplus \C[\partial]v_{\Delta}$, where $\C[\partial]v_{\bar\Delta}$ is an $\mathrm{HV}$-submodule and the action of $\mathrm{HV}$ on $\C[\partial]v_{\Delta}$ is given by
\begin{eqnarray}\label{hv3}
L_\lambda{}v_\Delta{}=(\partial{}+\alpha{}+\Delta{}\lambda{})v_\Delta{}+f(\partial{},\lambda{})v_{\bar\Delta{}},\
N_\lambda{}v_\Delta{}=\beta{}v_\Delta{}+k(\partial{},\lambda{})v_{\bar\Delta{}},\
\end{eqnarray}
for some polynomials $f(\partial,\lambda)$ and $k(\partial, \lambda)$.

By Lemma \ref{lem8} and Theorems \ref{th5-3}--\ref{th5-4}, we have
\begin{coro}\label{coro3}
 Nontrivial extensions of $\mathrm{HV}$-modules of the form \eqref{hvtype2} exist only if $\alpha=\bar\alpha$ and $\beta=\bar\beta$. These extensions are given, up to equivalence, by \eqref{hv3},
where the values of $\Delta$ and $\bar\Delta$ along with the corresponding polynomials $f(\pa,\la)$ and $k(\pa,\la)$ are listed as follows $(\bar\pa=\pa+\alpha)$:
\begin{itemize}
\item For $\b=\bar\beta=0$:
\begin{itemize}
\item[{\rm (i)}] $\Delta=\bar\Delta$, $f(\pa,\lambda)=c_0+c_1\lambda$, $k(\pa,\lambda)=a_0$ with $(c_0,c_1,a_0)\neq (0,0,0)$.
\item[{\rm (ii)}] $\Delta-\bar\Delta=1$, $f(\pa,\lambda)=0$, $k(\pa,\lambda)=a_1\lambda$ with $a_1\neq0$.

 \item[{\rm (iii)}]  $\Delta-\bar\Delta=2$, $f(\pa,\lambda)=c_2\lambda^2(2\bar\partial+\lambda)$, $k(\pa,\lambda)=a_2\la(\bar\pa-\bar\Delta\la)$ with $(c_2,a_2)\neq (0,0)$.
 \item[{\rm (iv)}] $(\Delta,\bar\Delta)=(1,-2)$,
 $f(\pa,\lambda)=c_3\bar\pa\lambda^2(\bar\partial+\lambda)$, $k(\pa,\la)=a_3\la(\bar\pa^2+3\bar\pa\la+2\la^2)$ with $(c_3,a_3)\neq (0,0)$.
 \item[{\rm (iv')}] $\Delta-\bar\Delta=3$, $\bar\Delta\neq-2$, $k(\pa,\la)=0$,
 $f(\pa,\lambda)=c_3\bar\pa\lambda^2(\bar\partial+\lambda)$ with $c_3\neq 0$.
 \item[{\rm (v)}] $\Delta-\bar\Delta=4$, $k(\pa,\lambda)=0$, $f(\pa,\la)=c_5 \la^2(4\bar\pa^3+6\bar\pa^2\la-\bar\pa\la^2+\bar\Delta\la^3)$ with $c_5 \neq 0$.
 \item [{\rm (vi)}]$(\Delta,\bar\Delta)=(1,-4),$ $k(\pa,\lambda)=0$, $f(\pa,\la)=c_6(\bar\pa^4\la^2-10\bar\pa^2\la^4-17\bar\pa\la^5-8\la^6)$ with $c_6 \neq 0$.
\item [{\rm (vii)}]$(\Delta,\bar\Delta)=(\frac72\pm\frac{\sqrt{19}}2, -\frac52\pm\frac{\sqrt{19}}2),$ $k(\pa,\lambda)=0$, $f(\pa,\la)=c_7\big(\bar\pa^4\la^3-(2\bar\Delta+3)\bar\pa^3\la^4-3\bar\Delta\bar\pa^2\la^5
    -(3\bar\Delta+1)\bar\pa\la^6-(\bar\Delta+\frac9{28})\la^7\big)$ with $c_7 \neq 0$.
\end{itemize}
\item For $\beta=\bar\beta\neq 0$:
\begin{itemize}
\item[{\rm (i)}] $\Delta=\bar\Delta$, $k(\pa,\lambda)=a_0$, $f(\pa,\lambda)=c_0+c_1\lambda$ with $(a_0, c_0,c_1)\neq (0,0,0)$.
\item [{\rm (ii)}] $\Delta-\bar\Delta=1$,
$k(\pa,\lambda)=b_2\la$, $f(\pa,\lambda)=c_2\lambda^2$ with $(b_2, c_2)\neq (0,0)$.
\item [{\rm (iii)}] $\Delta-\bar\Delta=2$, $k(\pa,\la)=b_2\la^2$,  $f(\pa,\la)=\frac{b_2}{\beta}\bar\pa\lambda^2+c_{3}\la^{3}$ with $(b_2, c_3)\neq (0,0)$.
\end{itemize}
\end{itemize}
Furthermore, all trivial extensions correspond to pairs of the form $f(\partial,\lambda)=(\partial+\alpha+\Delta\lambda)\phi(\partial)-(\partial+\bar\alpha+\bar\Delta\lambda)\phi(\partial+\lambda)$ and $k(\partial,\lambda)=\b\phi(\partial)-\bar\b\phi(\partial+\lambda)$, where $\phi$ is a polynomial.
\end{coro}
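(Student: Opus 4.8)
The plan is to reduce the $\mathrm{HV}$-problem entirely to the $\widetilde{\mathrm{SV}}$-classification of Section 5, exploiting the inclusion $\mathrm{HV}=\C[\pa]L\oplus\C[\pa]N\subset\widetilde{\mathrm{SV}}$. The first step is to set up a dictionary between the two extension problems. Starting from an arbitrary $\mathrm{HV}$-extension of the form \eqref{hvtype3}, with $E_3=\C[\pa]v_{\bar\Delta}\oplus\C[\pa]v_\Delta$ and the $L$- and $N$-actions on $v_\Delta$ given by \eqref{hv3}, I would turn $E_3$ into an $\widetilde{\mathrm{SV}}$-module by declaring $M_\la=Y_\la=0$ on all of $E_3$. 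One then verifies that all of \eqref{sv1}--\eqref{sv9} and \eqref{esv1}--\eqref{esv5} hold: the relations involving only $L$ and $N$ hold because $E_3$ is already an $\mathrm{HV}$-module, and every relation in which $M$ or $Y$ occurs reduces to $0=0$ once $M$ and $Y$ act as zero --- note in particular that $[Y_\la,Y_\mu]=(\la-\mu)M_{\la+\mu}$ is consistent with $M=0$ precisely because $Y=0$. Conversely, forgetting the $M$- and $Y$-actions of an $\widetilde{\mathrm{SV}}$-extension of the form \eqref{estype3} whose polynomials satisfy $g=h=0$ gives back an $\mathrm{HV}$-extension of the form \eqref{hvtype3}.

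The second step is to check that these two operations are mutually inverse and descend to a bijection of equivalence classes. A $\C[\pa]$-linear equivalence of extensions of this type sends $v_\Delta\mapsto v_\Delta+\phi(\pa)v_{\bar\Delta}$ for a polynomial $\phi$; a short computation shows that under such a map $g$ and $h$ are unchanged, while $f$ and $k$ transform exactly as in Lemma \ref{lem8}. Consequently the functional equations \eqref{svl}, \eqref{esv14}, \eqref{esv16} that govern the pair $(f,k)$ for an $\mathrm{HV}$-extension are nothing but the $g=h=0$ specialisation of the $\widetilde{\mathrm{SV}}$-cocycle equations, and an $\mathrm{HV}$-extension is trivial iff the corresponding $\widetilde{\mathrm{SV}}$-extension with $g=h=0$ is, the trivial cocycles in both cases being described by Lemma \ref{lem8}. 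With the dictionary in hand, the classification is read off from Section 5: by Lemma \ref{lem9} (case $\alpha\neq\bar\alpha$), Lemma \ref{lem10} (case $\alpha=\bar\alpha$, $\beta\neq\bar\beta$, $\bar\beta-\beta\neq1$) and Theorem \ref{lemm-sv3} (case $\alpha=\bar\alpha$, $\bar\beta-\beta=1$, where the only nontrivial $\widetilde{\mathrm{SV}}$-extensions have $h\neq0$ and hence are not of the form $g=h=0$), every $\widetilde{\mathrm{SV}}$-extension of the form \eqref{estype3} with $g=h=0$ is trivial, so a nontrivial $\mathrm{HV}$-extension forces $\alpha=\bar\alpha$ and $\beta=\bar\beta$. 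In those cases Theorems \ref{th5-3} and \ref{th5-4} list all nontrivial $\widetilde{\mathrm{SV}}$-extensions, each of which already has $g=h=0$, and deleting the identically-zero entries for $g$ and $h$ gives precisely the two lists of pairs $\big(f(\pa,\la),k(\pa,\la)\big)$ in the statement.

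The main point to be careful about is the equivalence-class bijection in the second step --- in particular the observation that a $\C[\pa]$-linear equivalence never alters $g$ and $h$. This is what makes ``nontrivial over $\mathrm{HV}$'' match ``nontrivial over $\widetilde{\mathrm{SV}}$'' for the extensions with $g=h=0$, and it is exactly the point that eliminates the potentially awkward case $\bar\beta-\beta=1$, where $\widetilde{\mathrm{SV}}$ does admit nontrivial extensions but only with a nonzero $Y$-cocycle. Everything else is either a direct citation of the Section 5 results or the short verification that the defining relations of $\widetilde{\mathrm{SV}}$ are satisfied by an $\mathrm{HV}$-module structure together with vanishing $M$- and $Y$-actions.
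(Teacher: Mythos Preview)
Your proposal is correct and follows essentially the same route as the paper, which simply cites Lemma~\ref{lem8} and Theorems~\ref{th5-3}--\ref{th5-4}; you have made explicit the dictionary underlying that citation, namely that the $(f,k)$-equations \eqref{svl}, \eqref{esv14}, \eqref{esv16} for an $\mathrm{HV}$-extension coincide with the $(f,k)$-part of the $\widetilde{\mathrm{SV}}$-analysis, which is decoupled from $(g,h)$. Your explicit treatment of the case $\bar\beta-\beta=1$ via the observation that equivalences leave $g$ and $h$ unchanged, together with your invocation of Lemmas~\ref{lem9}--\ref{lem10} and Theorem~\ref{lemm-sv3} to rule out $\alpha\neq\bar\alpha$ and $\beta\neq\bar\beta$, is in fact more complete than the paper's one-line citation.
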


\begin{rem} We studied extensions of Heisenber-Virasoro conformal modules in \cite{LY}. However, there exist some mistakes in the main results. Thus we correct the mistakes by Corollaries \ref{coro1}--\ref{coro3} in this paper.
\end{rem}

\noindent\bf{ Acknowledgements.}\ \rm
{\footnotesize This work was supported by National Natural Science
Foundation of China (11301109), the Research Fund for the Doctoral Program of Higher Education (20132302120042) and China Scholarship Council.}

\small

\end{document}